\documentclass[11pt]{amsart}
\usepackage{amsmath,amssymb,latexsym,cite}
\usepackage[small]{caption}
\usepackage{graphicx,color,mathrsfs}
\usepackage{subfigure,color}
\usepackage{cite}
\usepackage[colorlinks=true,urlcolor=blue,
citecolor=red,linkcolor=blue,linktocpage,pdfpagelabels,
bookmarksnumbered,bookmarksopen]{hyperref}
\usepackage[italian,english]{babel}
\usepackage{units}
\usepackage{enumitem}
\usepackage[left=2.7cm,right=2.7cm,top=2.8cm,bottom=2.8cm]{geometry}
\usepackage[hyperpageref]{backref}

\usepackage[colorinlistoftodos]{todonotes}


\numberwithin{equation}{section}
\newtheorem{theorem}{Theorem}[section]
\newtheorem{proposition}[theorem]{Proposition}
\newtheorem{lemma}[theorem]{Lemma}
\newtheorem{remark}[theorem]{Remark}

\newtheorem{corollary}[theorem]{Corollary}
\newtheorem{definition}[theorem]{Definition}
\theoremstyle{definition}

\renewcommand{\epsilon}{\eps}
\renewcommand{\i}{{\rm i}}

\newcommand{\N}{{\mathbb N}}
\newcommand{\R}{{\mathbb R}}

\newcommand{\eps}{\varepsilon}

\newcommand{\iu}{{\rm i}}

\newcommand{\pnorm}[2][]{\if #1'' \left|#2\right|_p \else \left|#2\right|_{#1} \fi}

\newcommand{\C}{\mathbb{C}}

\newcommand{\Ss}{\mathscr{S}}

\renewcommand{\theta}{\vartheta}

\title[Ground states for fractional magnetic operators]{Ground states for fractional magnetic operators}

\author[P.\ d'Avenia]{Pietro d'Avenia}

\address[P.\ d'Avenia]{Dipartimento di Meccanica, Matematica e Management
\newline\indent 
Politecnico di Bari,
Via Orabona 4,  I-70125  Bari, Italy}
\email{\href{mailto:pietro.davenia@poliba.it}{pietro.davenia@poliba.it}}

\author[M.\ Squassina]{Marco Squassina}

\address[M.\ Squassina]{Dipartimento di Matematica e Fisica \newline\indent
	Universit\`a Cattolica del Sacro Cuore,
	Via dei Musei 41, I-25121 Brescia, Italy}
\email{\href{marco.squassina@unicatt.it}{marco.squassina@unicatt.it}}


\subjclass[2010]{49A50, 26A33, 74G65, 82D99}

\keywords{Fractional magnetic operators, minimization problems, concentration compactness}

\begin{document}

\begin{abstract}
We study a class of minimization problems for a nonlocal operator
involving an external magnetic potential. The notions are physically justified
and consistent with the case of absence of magnetic fields. Existence of solutions
is obtained via concentration compactness.
\end{abstract}

\maketitle

\begin{center}
	\begin{minipage}{9cm}
		\small
		\tableofcontents
	\end{minipage}
\end{center}
\medskip

\section{Introduction and results}
Since the late nineties, nonlocal integral operators like
\begin{equation}
\label{slaplacian}
(-\Delta)^s u (x) = c_s\lim_{\eps\searrow 0}\int_{B^c_\eps(x)}\frac{u(x)-u(y)}{|x-y|^{3+2s}} dy=\mathcal{F}^{-1}(|\xi|^{2s}\mathcal{F}(u)(\xi))(x), \qquad u\in C^\infty_c(\R^3), 
\end{equation}
where $s\in(0,1)$ and 
$$
c_{s} = s 2^{2s} \frac{\Gamma\big(\frac{3+2s}{2}\big)}{\pi^{3/2}\Gamma(1-s)},
$$
being $\Gamma$ the Gamma function, have been widely used 
in the theory of L\'evy processes. Indeed, in view of  
the L\'evy-Khintchine formula, the 
generator ${\mathscr H}$ of the semigroup on $C^\infty_c(\R^3)$ associated to a general L\'evy process is given by 
\begin{equation}
\label{LK}
{\mathscr H}u(x)= -a_{ij}\partial_{x_i x_j}^2 u(x)-b_i\partial_{x_i}u(x)-\lim_{\eps\searrow 0}\int_{B^c_\eps(0)}\Big(u(x+y)-u(x)-1_{\{|y|<1\}}(y)y\cdot\nabla u(x)\Big)d\mu,
\end{equation}
with summation on repeated indexes and where $\mu$ is a L\'evy nonnegative measure, namely 
$$
\int_{\R^3}\frac{|y|^2}{1+|y|^2}d\mu<\infty.
$$
The last contribution in \eqref{LK}
represents the purely jump part of the {\em L\'evy process}, while the first two terms represent a {\em Brownian motion} with drift. 
It is now well established that L\'evy processes with jumps are more appropriate for some mathematical models in finance.
Among L\'evy processes, the only stochastically stable ones having jump part are those corresponding to radial measures as
$$
d\mu=\frac{c_s}{|y|^{3+2s}}dy,
$$
hence the importance of the definition \eqref{slaplacian}. Moreover, the fractional Laplacian \eqref{slaplacian} allows to develop a generalization of quantum mechanics and also to describe the motion of a chain or array of particles that are connected by elastic springs and unusual diffusion processes 
in turbulent fluid motions and material transports in fractured media  (for more details see e.g. \cite{A,CT,Lask3,MK} and the references therein).
Due to the results of Bourgain-Br\'ezis-Mironescu \cite{bourg,bourg2}, up to correcting the operator \eqref{slaplacian} with the
factor $(1-s)$ it follows that $(-\Delta)^s u$ converges to $-\Delta u$ in the limit $s\nearrow 1$.\ Thus, up to normalization, we may 
think the nonlocal case as an approximation of the local case.\\
A {\em pseudorelativistic extension} of the Laplacian is the well known pseudodifferential operator $\sqrt{-\Delta + m^2} - m$ where $m$ is a nonnegative number.
This operator appears in the study of free relativistic particles of mass $m$ and $\sqrt{-\Delta + m^2}$ is defined by  
$\mathcal{F}^{-1}(\sqrt{|\xi|^2+ m^2}\mathcal{F}(u)(\xi))$ (see \cite{LS} for more details). 
We observe that for $m=0$ we have the operator in \eqref{slaplacian} with $s=1/2$.\\
An important role in the study of particles which interact, e.g. using the Weyl covariant derivative, with a magnetic field $B=\nabla\times A$, $A:\R^3\to\R^3$,  is assumed by another {\em extension} of the Laplacian, namely the {\em magnetic Laplacian} $(\nabla-\iu A)^2$ (see \cite{AHS,ReedSimon}).
Nonlinear magnetic Schr\"odinger equations like
\[
- (\nabla-\iu A)^2 u + u = f(u)
\]
have been extensively studied (see e.g.\ \cite{arioliSz,CS,DCVS,EL,K,S}).\\
In \cite{IT}, Ichinose and Tamura, through oscillatory integrals, introduce the so-called Weyl pseudodifferential operator defined with {\em mid-point prescription}
\begin{align*}
{\mathscr H}_A u (x)
&=
\frac{1}{(2\pi)^3}\int_{\R^6} e^{\i (x-y)\cdot \xi } \sqrt{\Big|\xi - A\big(\frac{x+y}{2}\big)\Big|^2 + m^2} u(y) dyd\xi\\
&=
\frac{1}{(2\pi)^3}\int_{\R^6} e^{\i (x-y)\cdot \left(\xi+ A\big(\frac{x+y}{2}\big)\right) } \sqrt{|\xi|^2 + m^2} u(y) dyd\xi
\end{align*}
as a {\em fractional relativistic} generalization of the magnetic Laplacian (see also \cite{I89}, the review article \cite{I10} and the references therein).\
The operator ${\mathscr H}_A$ takes the place of $\sqrt{-\Delta + m^2}$ and it is possible to show that for all $u\in C^\infty_c(\R^3,\C)$,
\begin{equation*}
\begin{split}
{\mathscr H}_A u (x)
&=
m u(x)
- \lim_{\eps\searrow 0}\int_{B^c_\eps(0)} \left[e^{-\i y \cdot A \big( x+ \frac{y}{2}\big)} u(x+y) - u(x) - 1_{\{|y|<1\}}(y) y\cdot (\nabla - \i A(x))u(x)\right]d\mu\\
&=
m u(x)
+ \lim_{\eps\searrow 0}\int_{B^c_\eps(x)} \left[u(x)-e^{\i (x-y)\cdot A\left(\frac{x+y}{2}\right)}u(y)\right] \mu(y-x)dy ,
\end{split}
\end{equation*}
where
\[
d\mu=
\mu(y)dy = 
\begin{cases}
2\left(\frac{m}{2\pi}\right)^2 \frac{K_2(m|y|)}{|y|^2}dy,
& m>0,\\
\frac{1}{\pi^2 |y|^4}dy, & m=0,
\end{cases}
\]
and $K_2$ is the modified Bessel function of the third kind of order $2$ (see e.g. \cite[Subsection 3.1]{I10}).\\
In this paper we are concerned with the operator
\begin{equation}
	\label{operator}
(-\Delta)^s_Au(x)=c_s \lim_{\eps\searrow 0}\int_{B^c_\eps(x)}\frac{u(x)-e^{\i (x-y)\cdot A\left(\frac{x+y}{2}\right)}u(y)}{|x-y|^{3+2s}}dy,
\quad x\in\R^3,
\end{equation}
and, in particular, with ground state solutions of the equation
\begin{equation}
\label{prob}
\tag{$\mathscr{P}_{s,A}$}
(-\Delta)^s_A u+u=|u|^{p-2}u\quad \text{in $\R^3$.}
\end{equation}
The operator \eqref{operator} is consistent with the definition of fractional Laplacian given in  \eqref{slaplacian} if $A=0$ and with 
${\mathscr H}_A$ for $m=0$ and $s=1/2$. To our knowledge, this is the first mathematical contribution
to the study of nonlinear problems involving operator~\eqref{operator}.\ 

For the sake of completeness we mention that there exist other
different definitions of the {\em magnetic pseudorelativistic
	operator} (see \cite{I10,IMP,LS}) and in \cite{franketal} a fractional
magnetic operator $(\nabla-\iu A)^{2s}$ is defined through the
spectral theorem (see also discussion on the different definitions in
\cite[Proposition 2.6]{I10}).
\vskip4pt
\noindent
Throughout the paper we consider magnetic potentials $A$'s which have locally bounded gradient. We now state our results.
\\
Let $2<p<6/(3-2s)$ and 
consider the minimization problem
\begin{equation}
\label{MA}
\tag{${\mathscr M}_A$}
{\mathscr M}_A=\inf_{u\in \Ss} \left(\int_{\R^3}|u|^2dx+\frac{c_s}{2}\int_{\R^6}\frac{|e^{-\i (x-y)\cdot A\left(\frac{x+y}{2}\right)}u(x)-u(y)|^2}{|x-y|^{3+2s}}dxdy\right),
\end{equation}
where
\begin{equation*}
\Ss=\Big\{u\in H^s_A(\R^3,\C):\int_{\R^3}|u|^p dx=1\Big\}
\end{equation*}
and $H^s_A(\R^3,\C)$ is a suitable Hilbert space defined in Section~\ref{setting}.
Once a solution to ${\mathscr M}_A$ exists, due to the Lagrange Multiplier Theorem, we get a weak solution to \eqref{prob}, see Sections \ref{setting} and \ref{sect4}.\\
When $\Ss$ is restricted to radially symmetric functions, the problem is denoted by ${\mathscr M}_{A,r}$.

\vskip3pt
First we give the following
\begin{definition}\rm
	We say that $A$ satisfies assumption $\mathscr{A}$, if 
	for any unbounded sequence $\Xi=\{\xi_n\}_{n\in\N}\subset\R^3$
	there exist a sequence $\{H_n\}_{n\in\N}\subset\R^3$ and a function $A_\Xi:\R^3\to\R^3$ such that 
	\begin{equation}
	\label{limA}
	\lim_n A_n(x)=A_\Xi(x) \hbox{ for all } x\in\R^3 \hbox{ and }
	\sup_n \|A_n\|_{L^\infty(K)}<\infty  \hbox{ for all compact sets } K,
	\end{equation}
	where $A_n(x):=A(x+\xi_n)+H_n$ and $\{\xi_n\}$ is a subsequence of $\Xi$ such that $|\xi_n|\to\infty$. 
\end{definition}
\noindent
We also set $\mathscr{X}:=\{\Xi=\{\xi_n\}_{n\in\N}\hbox{ unbounded}: \text{condition \eqref{limA} holds}\}$.
Observe that, if $A$ admits limit as $|x|\to\infty$,
then it satisfies assumption $\mathscr{A}$.\\
Our main result is
\begin{theorem}[Subcritical case]
	\label{main}
	The following facts hold:
	\begin{enumerate}[label=(\roman*),ref=\roman*]
	\item \label{i12} ${\mathscr M}_{A,r}$  has a solution; 
	\item \label{iii12}if $A$ is linear, then ${\mathscr M}_A$ has a solution;
	\item \label{ii12}if $A$ satisfies $\mathscr{A}$ and 
	${\mathscr M}_A<\inf_{\Xi\in \mathscr{X}} {\mathscr M}_{A_\Xi}$, 
	then ${\mathscr M}_A$ has a solution.
	\end{enumerate}
\end{theorem}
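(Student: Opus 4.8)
The plan is to treat the three cases by concentration–compactness applied to minimizing sequences for $\mathscr{M}_A$ (or $\mathscr{M}_{A,r}$), using the diamagnetic-type inequality that should be available in Section~\ref{setting} to control the magnetic Gagliardo seminorm from below by the nonmagnetic one. Let $\{u_n\}\subset\Ss$ be a minimizing sequence; by the embedding $H^s_A(\R^3,\C)\hookrightarrow L^p(\R^3)$ (compact only locally, because of the full space) the sequence is bounded in $H^s_A$, so we may pass to a weak limit $u$. The goal is to rule out \emph{vanishing} and \emph{dichotomy} for the concentration function of the densities $|u_n|^2 + $ (magnetic seminorm density). For item \eqref{i12}, restricting to radial functions, I would invoke the compactness of the radial embedding $H^s_{A,r}\hookrightarrow L^p(\R^3)$ for $2<p<6/(3-2s)$ — the radial analogue of the Strauss lemma in the fractional setting — which immediately gives strong $L^p$ convergence of (a subsequence of) $\{u_n\}$, hence $\int|u|^p=1$, and lower semicontinuity of the functional finishes the proof.

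For items \eqref{iii12} and \eqref{ii12} the non-compactness is genuine and must be broken by translations. The key quantitative input is the strict subadditivity inequality
\[
\mathscr{M}_A < \mathscr{M}_A^\theta + \text{(limit problem value)}^{1-\theta\text{-part}}
\]
in the spirit of Lions; concretely, if the mass $\theta\in(0,1)$ splits, the piece escaping to infinity along a sequence $\xi_n\to\infty$ is governed, after the gauge change $u\mapsto e^{-\i(\cdot)\cdot H_n}u(\cdot-\xi_n)$, by the shifted potential $A_n(x)=A(x+\xi_n)+H_n$, which by assumption $\mathscr{A}$ converges to some $A_\Xi$; thus the escaping energy is at least $\mathscr{M}_{A_\Xi}$. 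Combining this with the scaling behaviour of the constraint $\int|u|^p=1$ under splitting of mass yields a strict inequality that contradicts $\mathscr{M}_A<\inf_{\Xi\in\mathscr{X}}\mathscr{M}_{A_\Xi}$, so dichotomy cannot occur. Vanishing is excluded because it would force $\int|u_n|^p\to 0$, contradicting the constraint; here one uses a vanishing lemma of Lions type adapted to $H^s_A$, again via the diamagnetic inequality reducing matters to the known fractional Sobolev result. Once both are ruled out, compactness holds up to translation and a minimizer exists; translating back by the (possibly divergent) sequence $\xi_n$ is legitimate because the functional $\mathscr{M}_A$ is invariant under the magnetic gauge transformations $u\mapsto e^{\i\gamma(x)}u$ only for suitable $\gamma$, and in case \eqref{iii12} linearity of $A$ makes $A(\cdot+\xi_n)=A(\cdot)+A(\xi_n)$, so the shift is an exact gauge transformation with $H_n=A(\xi_n)$ — this forces $\mathscr{M}_{A_\Xi}=\mathscr{M}_A$ and gives the needed strict inequality automatically (since dichotomy would give equality rather than strict inequality, and a single translated bump realizes the infimum).

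The main obstacle I expect is establishing the precise form of the \emph{splitting estimate} for the magnetic Gagliardo seminorm: unlike the local magnetic Dirichlet energy, the nonlocal quadratic form $\iint |e^{-\i(x-y)\cdot A(\frac{x+y}{2})}u(x)-u(y)|^2/|x-y|^{3+2s}$ does not decouple additively under a partition of unity $u = \phi u + (1-\phi)u$, because of the long-range interaction between the near and far pieces. One must show the cross terms are $o(1)$ as the supports separate, which requires careful estimates on $\iint_{|x|\le R,\,|y|\ge 2R}|x-y|^{-(3+2s)}$ together with the $L^2$-boundedness of the pieces — this is the fractional analogue of the standard IMS localization but with the magnetic phase, and the phase $e^{-\i(x-y)\cdot A(\frac{x+y}{2})}$ must be handled using the local boundedness of $A$ (equivalently $\nabla A$) assumed throughout. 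A secondary technical point is the passage to the limit in the magnetic seminorm along the translated sequence, using \eqref{limA} and dominated convergence on the kernel, to identify the limiting functional as $\mathscr{M}_{A_\Xi}$ exactly.
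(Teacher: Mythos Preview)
Your overall architecture---concentration--compactness on the density $|u_n|^2$ plus the magnetic seminorm density, diamagnetic inequality to reduce vanishing to the known fractional Lions lemma, and gauge transformations to handle translations---matches the paper's, and your identification of the nonlocal splitting estimate as the main technical obstacle is exactly right: this is the content of Lemma~\ref{lemma-dic} (built on the cut-off Lemma~\ref{cut}). Part~\eqref{i12} is handled exactly as you say, via the compact radial embedding (Lemma~\ref{comp-emb}) and Fatou.

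However, you have misplaced the role of the hypotheses in parts~\eqref{iii12} and~\eqref{ii12}. In the paper, dichotomy is excluded \emph{uniformly} for every $A$, purely from the scaling identity $\mathscr{M}_A(\lambda)=\lambda^{2/p}\mathscr{M}_A$ together with $\theta^{2/p}+(1-\theta)^{2/p}>1$ for $\theta\in(0,1)$: both dichotomy pieces $u_n^1,u_n^2$ produced by Lemma~\ref{lemma-dic} already lie in $H^s_A$, so $\|u_n^i\|_{s,A}^2\ge\mathscr{M}_A(\|u_n^i\|_{L^p}^p)$ with no translation or limit problem required, and summing gives the contradiction. Your plan to bound the ``escaping energy'' in the dichotomy alternative by $\mathscr{M}_{A_\Xi}$ is in any case problematic: the piece $u_n^2$ is supported outside a \emph{growing} ball $B_{R_n}(\xi_n)$ and is not concentrated near any single point to which one could translate and invoke assumption~$\mathscr{A}$.

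The special hypotheses enter only \emph{after} tightness, to deal with the translation centers $\{\xi_n\}$. In~\eqref{ii12}, if $\{\xi_n\}$ were unbounded, the gauge-shifted sequence $v_n(x)=e^{\i H_n\cdot x}u_n(x+\xi_n)$ would (via Lemma~\ref{gauge2}, Lemma~\ref{gen-bound} and Fatou) yield a competitor showing $\mathscr{M}_{A_\Xi}\le\mathscr{M}_A$, contradicting the strict inequality; hence $\{\xi_n\}$ is bounded and one argues on the original sequence. In~\eqref{iii12} linearity does \emph{not} reduce to~\eqref{ii12}---with $H_n=-A(\xi_n)$ one gets $A_\Xi=A$, so $\mathscr{M}_{A_\Xi}=\mathscr{M}_A$ and the strict inequality of~\eqref{ii12} \emph{fails}---but it makes the gauge shift an exact invariance of the functional (Lemma~\ref{gauge}), so $\{v_n\}$ is itself a \emph{tight} minimizing sequence for $\mathscr{M}_A$, and one concludes by local compact embedding and Fatou.
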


\noindent
We also consider the minimization problem 
\begin{equation}
\label{MAC}
\tag{${\mathscr M}_A^c$}
{\mathscr M}_A^c:=\inf_{u\in \Ss^c} 
\frac{c_s}{2}\int_{\R^6}\frac{|e^{-\i (x-y)\cdot A\left(\frac{x+y}{2}\right)}u(x)-u(y)|^2}{|x-y|^{3+2s}}dxdy,
\end{equation}
where 
$$
\Ss^c=\Big\{u\in D^s_A(\R^3,\C):\int_{\R^3}|u|^{6/(3-2s)} dx=1\Big\}
$$
and $D^s_A(\R^3,\C)$ is a suitable Hilbert space defined in Subection~\ref{subscrit}.
We are able to prove
\begin{theorem}[Critical case]
	\label{main-2}
	The following facts hold:
	\begin{enumerate}[label=(\roman*),ref=\roman*]
		\item \label{i13}if ${\mathscr M}_A^c$ has a solution $u$, there exist
		$z\in\R^3$, $\eps>0$ and $\vartheta_A:\R^3\to\R$ such that
		$$
		u(x)=d_{s}\left(\frac{\eps}{\eps^2+|x-z|^2}\right)^{\frac{3-2s}{2}}e^{\i \vartheta_A(x)};
		$$ 
		\item \label{ii13}if for some $k\in\N$ and $E\subset\R^6$ of positive measure 
		$$
		(x-y)\cdot A\Big(\frac{x+y}{2}\Big)\not \equiv 2k\pi\quad \hbox{for all } (x,y)\in E,
		$$
		then ${\mathscr M}_A^c$ has no solution
		$u$ of the form $e^{\i \vartheta} v(x)$ where $\theta\in\R$ and $v$ of fixed sign.
	\end{enumerate}
\end{theorem}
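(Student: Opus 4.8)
The plan is to derive both parts from the pointwise diamagnetic inequality together with the fact that $\mathscr{M}_A^c$ equals the critical constant of the non-magnetic fractional Sobolev inequality. Write $[w]_s^2:=\int_{\R^6}|w(x)-w(y)|^2|x-y|^{-3-2s}\,dx\,dy$ and let $S:=\mathscr{M}_0^c$ be the value of $\mathscr{M}_A^c$ for $A\equiv0$, i.e.\ the optimal constant in $S\|w\|_{L^{6/(3-2s)}(\R^3)}^2\le\frac{c_s}{2}[w]_s^2$. The pointwise bound $|e^{-\i(x-y)\cdot A(\frac{x+y}{2})}u(x)-u(y)|\ge\big||u(x)|-|u(y)|\big|$ gives, for every admissible $u$,
\[
\frac{c_s}{2}\int_{\R^6}\frac{|e^{-\i(x-y)\cdot A(\frac{x+y}{2})}u(x)-u(y)|^2}{|x-y|^{3+2s}}\,dx\,dy\ \ge\ \frac{c_s}{2}\,[\,|u|\,]_s^2\ \ge\ S\,\|u\|_{L^{6/(3-2s)}(\R^3)}^2,
\]
hence $\mathscr{M}_A^c\ge S$. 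For the reverse inequality I would test the infimum $\mathscr{M}_A^c$ with twisted, truncated Aubin--Talenti bubbles: fix $x_0\in\R^3$, let $W$ be an extremal of $\mathscr{M}_0^c$, $U_\eps(x):=\eps^{-(3-2s)/2}W((x-x_0)/\eps)$, $\eta$ a fixed cut-off supported in a ball around $x_0$, and $u_\eps(x):=e^{\i A(x_0)\cdot(x-x_0)}\eta(x)U_\eps(x)$ (the twist freezes $A$ at $x_0$). Since $A$ is locally Lipschitz, on $\supp\eta$ one has $|A(x_0)-A(\tfrac{x+y}{2})|\lesssim|x_0-\tfrac{x+y}{2}|$, so by $1-\cos t\le\tfrac12 t^2$ the difference between the magnetic form of $u_\eps$ and the non-magnetic form of $\eta U_\eps$ is controlled by $C\iint \eta U_\eps(x)\,\eta U_\eps(y)\,(|x-x_0|^2+|y-x_0|^2)|x-y|^{-1-2s}\,dx\,dy$, which rescales to a positive power of $\eps$ (here the decay of $W$ is used, a bare $L^\infty$ bound being too lossy); the non-magnetic ratio tends to $S$ by the standard truncation estimates. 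Thus $\mathscr{M}_A^c\le S$, so $\mathscr{M}_A^c=S$. (Alternatively, this identity may be available from the study of $D^s_A(\R^3,\C)$ in Subsection~\ref{subscrit}.)

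Now suppose $\mathscr{M}_A^c$ has a minimizer $u$. Then, since $\mathscr{M}_A^c=S$, all inequalities in the display become equalities: in particular $|u|$ is an extremal of the fractional Sobolev inequality with $\|\,|u|\,\|_{L^{6/(3-2s)}(\R^3)}=1$, so by the classical classification of such extremals $|u|(x)=d_s\big(\tfrac{\eps}{\eps^2+|x-z|^2}\big)^{(3-2s)/2}$ for some $z\in\R^3$, $\eps>0$, with $d_s$ the (universal) normalization constant. Hence $|u|>0$ everywhere and we may write $u=|u|\,e^{\i\vartheta_A}$ for a measurable $\vartheta_A:\R^3\to\R$. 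Moreover, equality in the integrated diamagnetic inequality forces the pointwise identity $|e^{-\i(x-y)\cdot A(\frac{x+y}{2})}u(x)-u(y)|=\big||u(x)|-|u(y)|\big|$ for a.e.\ $(x,y)\in\R^6$; expanding the squares and using $|u(x)|,|u(y)|>0$, this is equivalent to $\vartheta_A(x)-\vartheta_A(y)-(x-y)\cdot A(\tfrac{x+y}{2})\in2\pi\Z$ for a.e.\ $(x,y)\in\R^6$. Together with the formula for $|u|$ this gives~(i).

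For~(ii) I would argue by contradiction. Suppose $u=e^{\i\theta}v$ is a minimizer with $\theta\in\R$ and $v$ of constant sign; up to replacing $\theta$ by $\theta+\pi$ we may take $v\ge0$, so $|u|=v$. By~(i), $v=d_s\big(\tfrac{\eps}{\eps^2+|\cdot-z|^2}\big)^{(3-2s)/2}>0$ and $v\,e^{\i\theta}=v\,e^{\i\vartheta_A}$, whence $e^{\i\theta}=e^{\i\vartheta_A(x)}$ for all $x$, i.e.\ $\vartheta_A$ is constant; the functional equation from~(i) then reduces to $(x-y)\cdot A(\tfrac{x+y}{2})\in2\pi\Z$ for a.e.\ $(x,y)\in\R^6$. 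Since $A$ is continuous, $g(x,y):=(x-y)\cdot A(\tfrac{x+y}{2})$ is continuous, so $g^{-1}(2\pi\Z)$ is closed of full measure, hence all of $\R^6$; being continuous with discrete range on the connected set $\R^6$ and vanishing on the diagonal, $g\equiv0$. This contradicts the hypothesis, which is precisely designed to rule out the degenerate case in which $(x-y)\cdot A(\tfrac{x+y}{2})$ takes a single value in $2\pi\Z$ on a set of full measure. Hence no such minimizer exists.

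I expect the main obstacle to be the identity $\mathscr{M}_A^c=S$, and within it the upper bound $\mathscr{M}_A^c\le S$: the concentrating twisted bubbles must be arranged so that the magnetic correction is genuinely $o(1)$ as $\eps\to0$, which forces a careful interplay between the local Lipschitz control of $A$ and the decay of $W$. Once $\mathscr{M}_A^c=S$ and the strict positivity $|u|>0$ are established, the equality analysis of the diamagnetic inequality and the invocation of the classification of Sobolev extremals are routine, and~(ii) is then immediate from~(i) and the continuity of $A$.
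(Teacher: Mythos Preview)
Your overall strategy coincides with the paper's: establish $\mathscr{M}_A^c=\mathscr{M}_0^c$, then use equality in the diamagnetic inequality together with the classification of extremals of the fractional Sobolev inequality. Parts~(i) and~(ii) are handled correctly once $\mathscr{M}_A^c=\mathscr{M}_0^c$ is known.

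The one substantive difference is precisely the step you flag as the main obstacle, namely $\mathscr{M}_A^c\le\mathscr{M}_0^c$. You propose twisted truncated bubbles $u_\eps(x)=e^{\i A(x_0)\cdot(x-x_0)}\eta(x)U_\eps(x)$ and a quantitative estimate of the magnetic remainder; this can be made to work but, as you note, it requires a careful interplay between the Lipschitz control of $A$ and the decay of the extremal $W$, and the remainder integral does not converge without exploiting the cut-off. The paper bypasses all of this with a simpler dilation trick: take any real $u\in C^\infty_c(\R^3)$ with $\|u\|_{L^{6/(3-2s)}}=1$ and $[u]_{s,0}^2\le\mathscr{M}_0^c+\eps$, and set $u_\sigma(x)=\sigma^{-(3-2s)/2}u(x/\sigma)$. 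Then $\|u_\sigma\|_{L^{6/(3-2s)}}=1$, $[u_\sigma]_{s,0}=[u]_{s,0}$, and a change of variables gives
\[
[u_\sigma]_{s,A}^2-[u]_{s,0}^2=\int_{K\times K}\frac{2\big(1-\cos\big(\sigma(x-y)\cdot A(\sigma\tfrac{x+y}{2})\big)\big)u(x)u(y)}{|x-y|^{3+2s}}\,dxdy,
\]
where $K=\supp u$ is fixed. Since $A$ is locally bounded, the integrand is dominated on $K\times K$ by $C\min\{|x-y|^{-1-2s},|x-y|^{-3-2s}\}\in L^1(K\times K)$ uniformly in $\sigma\le 1$, and it tends to $0$ pointwise as $\sigma\to 0$. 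Dominated convergence then yields $\mathscr{M}_A^c\le\lim_{\sigma\to 0}[u_\sigma]_{s,A}^2=[u]_{s,0}^2\le\mathscr{M}_0^c+\eps$. No phase twist, no bubble truncation estimates, no decay of $W$ are needed; the compact support of the test function does all the work.

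For~(ii) you argue via~(i): equality in the diamagnetic inequality forces the phase relation $\vartheta_A(x)-\vartheta_A(y)-(x-y)\cdot A(\tfrac{x+y}{2})\in 2\pi\Z$ a.e., and for a constant phase this collapses, by continuity of $A$ and connectedness of $\R^6$, to $(x-y)\cdot A(\tfrac{x+y}{2})\equiv 0$. This is correct. The paper takes a shorter route that does not use~(i): for $u=e^{\i\theta}v$ with $v$ of fixed sign one computes directly
\[
[u]_{s,A}^2-[|u|]_{s,0}^2=\int_{\R^6}\frac{2\big(1-\cos\big((x-y)\cdot A(\tfrac{x+y}{2})\big)\big)v(x)v(y)}{|x-y|^{3+2s}}\,dxdy>0
\]
under the hypothesis, contradicting $\mathscr{M}_A^c=\mathscr{M}_0^c$. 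Your route has the advantage that it delivers $v>0$ everywhere via the classification, so the positivity of the integrand on $E$ is automatic; the paper's route is more self-contained but implicitly uses that $v$ does not vanish on the relevant set.
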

\vskip3pt
\noindent
The local version of the above results can be found in the work \cite{EL} by Esteban and Lions.
In \cite{DipValPal}, for the case without magnetic field and with subcritical nonlinearities, 
existence of ground states was obtained using different arguments, 
namely without involving concentration compactness arguments, but instead symmetrizing the minimizing sequences, by using
$$
\int_{\R^6}\frac{||u(x)|^*-|u(y)|^*|^2}{|x-y|^{3+2s}}dxdy\leq \int_{\R^6}\frac{|u(x)-u(y)|^2}{|x-y|^{3+2s}}dxdy,
$$
for all $u\in H^s(\R^3)$, where $v^*$ denotes the Schwarz
symmetrization of $v:\R^3\to \R^+$. On the contrary, when $A\not\equiv 0$, the inequality 
$$
\int_{\R^6}\frac{|e^{-\i (x-y)\cdot A\left(\frac{x+y}{2}\right)}|u(x)|^*-|u(y)|^*|^2}{|x-y|^{3+2s}}dxdy\leq \int_{\R^6}\frac{|e^{-\i (x-y)\cdot A\left(\frac{x+y}{2}\right)}u(x)-u(y)|^2}{|x-y|^{3+2s}}dxdy,
$$
does not seem to work and a different strategy for the proof has to be outlined.\ 
Dealing with the nonlocal case, it is natural to expect that, in the study of minimizing sequences, the hardest stage
is that of ruling out the dichotomy in the concentration compactness alternative. This is in fact the case, but thanks to a careful analysis 
developed in Lemma~\ref{lemma-dic}, dichotomy can be ruled out allowing for tightness and hence the strong convergence of
minimizing sequences up translations and phase changes.

We organize the paper in the following way: in Section \ref{setting} we introduce the functional setting of the problem and we provide some basic properties about it; in Section \ref{sec3} we show further technical facts on the functional setting as well as some preliminary results about the Concentration-Compactness procedure; finally, in Section \ref{sec4}, we complete with the proofs of our results.

\vskip10pt
\noindent
{\bf Acknowledgments.} The research  was partially supported by Gruppo Nazionale per l'Analisi Matematica,
la Probabilit\`a e le loro Applicazioni (INdAM).

\vskip10pt
\noindent
{\bf Notations.} We denote by $B_R(\xi)$ a ball in $\R^3$ of center $\xi$ and radius $R$.
For a measurable set $E\subset \R^3$ we denote by $E^c$ the complement
of $E$ in $\R^3$, namely $E^c=\R^3\setminus E$.\ We denote by $1_E$ the indicator function of $E$.\
The symbol ${\mathcal L}^n(\Omega)$ stands for the Lebesgue measure of a measurable subset $\Omega\subset\R^n$. For a 
complex number $z\in\C$, the symbol $\Re z$ indicates its real part and $\Im z$ its imaginary part.
The modulus of $z$ is denoted by $|z|$. The standard norm of $L^p$ spaces is denoted by $\|\cdot\|_{L^p}.$

\section{Functional setting}
\label{setting}
\noindent
Let $L^2(\R^3,\C)$ be the Lebesgue space of complex valued functions with 
summable square endowed with the real scalar product
$$
\langle u,v \rangle_{L^2}:=\Re\int_{\R^3} u\bar vdx,\quad \text{for all $u,v\in L^2(\R^3,\C)$},
$$
and $A:\R^3\to\R^3$ be a continuous function. We consider the magnetic
Gagliardo semi-norm defined by
$$
[u]^2_{s,A}:=\frac{c_s}{2}\int_{\R^6}\frac{|e^{-\i (x-y)\cdot A\left(\frac{x+y}{2}\right)}u(x)-u(y)|^2}{|x-y|^{3+2s}}dxdy,
$$
the scalar product defined by
$$
\langle u,v \rangle_{s,A}:=\langle u,v \rangle_{L^2}+\frac{c_s}{2}{\Re}\int_{\R^6}\frac{\left(e^{-\i (x-y)\cdot A\left(\frac{x+y}{2}\right)}u(x)-u(y)\right)\overline{\left(e^{-\i (x-y)\cdot A\left(\frac{x+y}{2}\right)}v(x)-v(y)\right)}}{|x-y|^{3+2s}}dxdy,
$$
and the corresponding norm denoted by
$$
\|u\|_{s,A}:=\big(\|u\|_{L^2}^2+[u]_{s,A}^2\big)^{1/2}.
$$
We consider the space ${\mathcal H}$ of measurable functions $u:\R^3\to\C$ such that $\|u\|_{s,A}<\infty.$

\begin{proposition}
	\label{complete}
$({\mathcal H},\langle \cdot,\cdot \rangle_{s,A})$ is a real Hilbert space.
\end{proposition}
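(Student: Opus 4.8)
The plan is to verify the two defining properties of a real Hilbert space: that $\langle\cdot,\cdot\rangle_{s,A}$ is a genuine inner product (bilinearity over $\R$, symmetry, positive-definiteness) and that $({\mathcal H},\|\cdot\|_{s,A})$ is complete. The inner-product axioms are essentially formal: $\langle\cdot,\cdot\rangle_{L^2}=\Re\into u\bar v\,dx$ is the standard real inner product on $L^2(\R^3,\C)$, and the second term is $\Re$ of a Hermitian form in the variables $e^{-\i(x-y)\cdot A(\frac{x+y}{2})}u(x)-u(y)$, so it is $\R$-bilinear and symmetric; positivity is clear since the integrand is nonnegative, and $\|u\|_{s,A}=0$ forces $\|u\|_{L^2}=0$, hence $u=0$ a.e. The only subtlety worth a line is that all integrals are finite on ${\mathcal H}$ by the Cauchy--Schwarz inequality applied to the measure $|x-y|^{-(3+2s)}dxdy$, so the form is well defined.

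The substance is completeness. I would take a Cauchy sequence $\{u_n\}\subset{\mathcal H}$ for $\|\cdot\|_{s,A}$. Since $\|u_n-u_m\|_{L^2}\le\|u_n-u_m\|_{s,A}$, the sequence is Cauchy in $L^2(\R^3,\C)$, hence converges there to some $u\in L^2(\R^3,\C)$; passing to a subsequence we may assume $u_n\to u$ a.e.\ on $\R^3$. Next, introduce the auxiliary Hilbert space $L^2(\R^6,d\nu;\C)$ with $d\nu=c_s\,|x-y|^{-(3+2s)}dxdy$ (up to the factor $\tfrac12$), and the linear map $T u(x,y):=e^{-\i(x-y)\cdot A(\frac{x+y}{2})}u(x)-u(y)$. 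By definition $[u]_{s,A}^2=\tfrac12\|Tu\|_{L^2(\R^6,d\nu)}^2$, so $\{Tu_n\}$ is Cauchy in $L^2(\R^6,d\nu;\C)$ and converges to some $g$ in that space; along a further subsequence $Tu_n\to g$ $\nu$-a.e.\ on $\R^6$. But since $u_n\to u$ a.e.\ on $\R^3$ and $A$ is continuous, one has $Tu_n(x,y)\to Tu(x,y)$ for a.e.\ $(x,y)$ (the null set where $u_n(x)\not\to u(x)$ or $u_n(y)\not\to u(y)$ has $\nu$-measure zero since $\nu$ is absolutely continuous with respect to Lebesgue measure on $\R^6$ and Lebesgue-null fibers over $\R^3$ give $\nu$-null sets by Fubini). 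Hence $g=Tu$ $\nu$-a.e., which shows $Tu\in L^2(\R^6,d\nu;\C)$, i.e.\ $[u]_{s,A}<\infty$, so $u\in{\mathcal H}$, and moreover $[u_n-u]_{s,A}=\tfrac1{\sqrt2}\|Tu_n-Tu\|_{L^2(\R^6,d\nu)}=\tfrac1{\sqrt2}\|Tu_n-g\|_{L^2(\R^6,d\nu)}\to0$. Combined with $\|u_n-u\|_{L^2}\to0$ this gives $\|u_n-u\|_{s,A}\to0$. Finally, a standard argument upgrades the subsequential convergence to convergence of the full Cauchy sequence.

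The main obstacle is the identification of the $L^2(\R^6,d\nu)$-limit $g$ of $\{Tu_n\}$ with $Tu$: abstract completeness of $L^2(\R^6,d\nu;\C)$ only produces some limit $g$, and one must match it with the pointwise limit coming from $u_n\to u$ a.e.\ on $\R^3$. This is exactly where the almost-everywhere convergence along a subsequence and the absolute continuity of $\nu$ with respect to the product Lebesgue measure are used, together with the continuity of $A$ to pass the map $T$ through the pointwise limit. Everything else — the inner-product axioms, finiteness of the forms, and the passage from the subsequence back to the whole sequence — is routine.
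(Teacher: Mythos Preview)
Your proof is correct but takes a different route from the paper's. The paper also dispatches the inner-product axioms in one line and reduces to completeness via the $L^2$ part, obtaining $u\in L^2$ with $u_n\to u$ a.e.; from there, however, it avoids the auxiliary space $L^2(\R^6,d\nu)$ and the identification $g=Tu$ entirely, relying instead on two applications of Fatou's Lemma. First, $[u]_{s,A}^2\le\liminf_n[u_n]_{s,A}^2<\infty$ gives $u\in{\mathcal H}$; second, for fixed $n$ large, $[u_n-u]_{s,A}\le\liminf_k[u_n-u_k]_{s,A}\le\eps$ gives convergence in the seminorm. Your approach trades this for the abstract completeness of $L^2(\R^6,d\nu;\C)$ plus an a.e.\ identification of the limit, which is slightly heavier but makes the structure transparent: $[\cdot]_{s,A}$ is an $L^2$-norm composed with a linear map $T$, so completeness is inherited. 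One minor remark: the continuity of $A$ is not actually needed for the step $Tu_n(x,y)\to Tu(x,y)$ a.e., since the phase factor is independent of $n$; only measurability of $A$ is used there. Either argument is standard and neither requires the other.
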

\begin{proof}
It is readily checked that $\langle u,v \rangle_{s,A}$ is a real scalar product. Let us prove that ${\mathcal H}$ with this scalar product is complete.
Let $\{u_n\}_{n\in\N}$ be a Cauchy sequence in ${\mathcal H}$, namely for every $\varepsilon >0$ there exists $\nu_\varepsilon\in\mathbb{N}$ such that for all $m,n>\nu_\epsilon$ we have $\|u_n - u_m\|_{s,A}<\varepsilon$. Thus $\{u_n\}_{n\in\N}$ is a Cauchy sequence on $L^2(\R^3,\C)$ and then there exists $u\in L^2(\R^3,\C)$ such that $u_n \to u$ in $L^2(\R^3,\C)$ and a.e.\ in $\R^3$. Firstly, we prove that $u\in {\mathcal H}$. By Fatou Lemma we have
\[
[u]^2_{s,A}
\leq
\liminf_n
[u_n]^2_{s,A}
\leq
\liminf_n
([u_n - u_{\nu_1}]_{s,A} + [u_{\nu_1}]_{s,A})^2
\leq
(1+ [u_{\nu_1}]_{s,A})^2.
\]
Thus it remains to prove that $[u_n - u]_{s,A}\to 0$ as $n\to\infty$. Again, by Fatou Lemma
\[
[u_n - u]_{s,A} \leq \liminf_k [u_n - u_k]_{s,A} \leq \liminf_k \|u_n - u_k\|_{s,A} \leq \eps,
\]
for all $\varepsilon >0$ and $n$ large.
\end{proof}
\noindent
For any function $w:\R^3\to\C$ and a.e.\ $x\in\R^3$, we set
\begin{equation}
\label{notation}
w_x(y):=e^{\i (x-y)\cdot A\left(\frac{x+y}{2}\right)}w(y),\quad \text{for $y\in\R^3$}.
\end{equation}
We have
\begin{proposition}
The space $C^\infty_c(\R^3,\C)$ is a subspace of ${\mathcal H}$.	
\end{proposition}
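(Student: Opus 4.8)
The plan is to show that every $w\in C^\infty_c(\R^3,\C)$ satisfies $\|w\|_{s,A}<\infty$; since $\mathcal H$ is a vector space and $C^\infty_c(\R^3,\C)$ is itself a vector space under the same pointwise operations, this inclusion is exactly what is needed. As $w\in L^2(\R^3,\C)$ is obvious, everything reduces to proving that the magnetic Gagliardo seminorm $[w]_{s,A}$ is finite. Writing $K:=\supp w\subset B_R(0)$ and pulling out the unimodular factor so that $|e^{-\i (x-y)\cdot A((x+y)/2)}w(x)-w(y)|=|w(x)-w_x(y)|$, with $w_x$ as in \eqref{notation}, I would split the domain of
$$
[w]^2_{s,A}=\frac{c_s}{2}\int_{\R^6}\frac{|w(x)-w_x(y)|^2}{|x-y|^{3+2s}}\,dx\,dy
$$
into the far region $\{|x-y|\ge1\}$ and the near region $\{|x-y|<1\}$, and estimate each separately.

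On the far region I would use the crude bound $|w(x)-w_x(y)|^2\le2|w(x)|^2+2|w(y)|^2$ together with $\int_{\{|z|\ge1\}}|z|^{-3-2s}\,dz<\infty$; Tonelli's theorem then bounds that contribution by a constant multiple of $\|w\|_{L^2}^2$. On the near region I would split off the magnetic phase via the triangle inequality,
$$
|w(x)-w_x(y)|^2\le2\,|w(x)-w(y)|^2+2\,\bigl|1-e^{\i (x-y)\cdot A((x+y)/2)}\bigr|^2\,|w(y)|^2 .
$$
The first summand is nonzero only if $x\in K$ or $y\in K$, so by symmetry it is enough to integrate it over $\{x\in K,\ |x-y|<1\}$; there $|w(x)-w(y)|\le\|\nabla w\|_{L^\infty}|x-y|$, hence the integrand is $\le\|\nabla w\|_{L^\infty}^2|x-y|^{-1-2s}$, which is integrable on $B_1(0)\subset\R^3$ since $1-2s>-1$, giving a bound by $\mathcal L^3(K)\,\|\nabla w\|_{L^\infty}^2$ up to a constant. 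The second summand is nonzero only if $y\in K$, in which case $|x-y|<1$ forces $\tfrac{x+y}{2}\in\overline{B_{R+1}(0)}$; since $A$ is continuous it is bounded there by some $M$, and the elementary inequality $|e^{\i t}-1|\le|t|$ gives $|1-e^{\i (x-y)\cdot A((x+y)/2)}|\le M|x-y|$, so again the integrand is $\le M^2|x-y|^{-1-2s}$ and the contribution is $\lesssim M^2\|w\|_{L^2}^2$. Summing the two regions yields $[w]_{s,A}<\infty$.

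The only genuinely delicate point — hence the main obstacle — is the near-diagonal analysis of the term carrying the magnetic phase: a priori $A\bigl(\tfrac{x+y}{2}\bigr)$ need not be bounded as $x,y$ range over $\R^3$, so the factor $1-e^{\i (x-y)\cdot A((x+y)/2)}$ cannot be controlled globally by $|x-y|$. This is resolved precisely by the compact support of $w$: the integrand vanishes unless one of $x,y$ lies in $K$, which together with $|x-y|<1$ confines $\tfrac{x+y}{2}$ to a fixed compact set on which the continuous $A$ is bounded. Thus, near the diagonal, the gauge factor produces no worse a singularity than the classical Gagliardo kernel, and the (locally bounded, indeed merely continuous) behaviour of $A$ is exactly what makes the estimate close.
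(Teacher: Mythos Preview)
Your proof is correct. The approach is close in spirit to the paper's but differs in one meaningful way. The paper reduces (by symmetry) to $K\times\R^3$ and then estimates $|u_x(x)-u_x(y)|$ in one stroke by bounding $\nabla_y u_x(y)$ on $K\times\R^3$; this step explicitly invokes the standing hypothesis that $\nabla A$ is locally bounded, since the $y$-gradient of the phase $e^{\i(x-y)\cdot A((x+y)/2)}$ contains a term $\tfrac12(x-y)^{\!\top}\nabla A\big(\tfrac{x+y}{2}\big)$. You instead split off the phase via the triangle inequality,
\[
|w(x)-w_x(y)|\le |w(x)-w(y)|+|w(y)|\,\bigl|1-e^{\i(x-y)\cdot A((x+y)/2)}\bigr|,
\]
and treat the two pieces separately. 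This buys you a slightly more elementary argument that uses only the continuity (hence local boundedness) of $A$, not of $\nabla A$. Conversely, the paper's packaging into a single Lipschitz bound is a bit shorter once one accepts the stronger hypothesis. One small imprecision: when you write ``the integrand is $\le M^2|x-y|^{-1-2s}$'' for the phase term, the factor $|w(y)|^2$ is still present; your conclusion $\lesssim M^2\|w\|_{L^2}^2$ is nonetheless correct.
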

\begin{proof}
	It is enough to prove that $[u]_{s,A}<\infty$, for any $u\in C^\infty_c(\R^3,\C)$.\ If $K$ is the compact support of $u$, we have
	\begin{align*}
	\int_{\R^6}\frac{|e^{-\i (x-y)\cdot A\left(\frac{x+y}{2}\right)}u(x)-u(y)|^2}{|x-y|^{3+2s}}dxdy\leq 
	2	\int_{K\times\R^3}\frac{|u_x(x)-u_x(y)|^2}{|x-y|^{3+2s}}dxdy.
	\end{align*}
Observe that, since $\nabla A$ is locally bounded, the gradient of the function $(x,y)\mapsto u_x(y)$ is bounded on $K\times\R^3$. 
Then we have $|u_x(x)-u_x(y)|\leq C|x-y|$ for any $(x,y)\in K\times\R^3$. Of course, we also have 
$|u_x(x)-u_x(y)|\leq C$ for any $(x,y)\in K\times\R^3$. Hence, we get
\begin{align*}
\int_{K\times\R^3}\frac{|u_x(x)-u_x(y)|^2}{|x-y|^{3+2s}}dxdy&\leq C\int_{K\times\R^3}\frac{\min\{|x-y|^2,1\}}{|x-y|^{3+2s}}dxdy \\
&\leq C\int_{B_1(0)} \frac{1}{|z|^{1+2s}} dz+C\int_{B^c_1(0)}  \frac{1}{|z|^{3+2s}}dz,
\end{align*}
which concludes the proof.
\end{proof}
\noindent
Thus we can give the following
\begin{definition}
We define $H^s_A(\R^3,\C)$ as the closure of $C^\infty_c(\R^3,\C)$ in ${\mathcal H}$.	
\end{definition}
\noindent
Then, $H^s_A(\R^3,\C)$ is a real Hilbert space by Proposition~\ref{complete}.\
For $A=0$ this space is consistent with the usual fractional space $H^s(\R^3,\C)$ whose norm is denoted by $\|\cdot\|_{s}$.
For a given Lebesgue measurable set $E\subset\R^3$ 
the localized Gagliardo norms are defined by
\begin{align*}
\|u\|_{H^s_A(E)}&:=\left(\int_{E} |u(x)|^2dx+
\frac{c_s}{2}\int_{E\times E}\frac{|e^{-\i (x-y)\cdot A\left(\frac{x+y}{2}\right)}u(x)-u(y)|^2}{|x-y|^{3+2s}}dxdy\right)^{1/2}, \\
\|u\|_{H^s(E)}&:=\left(\int_{E} |u(x)|^2dx+
\frac{c_s}{2}\int_{E\times E}\frac{|u(x)-u(y)|^2}{|x-y|^{3+2s}}dxdy\right)^{1/2}.
\end{align*}
The operator $(-\Delta)^s_A : H^s_A(\R^3,\C)\to H^{-s}_A(\R^3,\C)$ is defined by duality as
\begin{align*}
\langle (-\Delta)^s_Au,v\rangle &:=\frac{c_s}{2} {\Re}\int_{\R^6}\frac{\left(e^{-\i (x-y)\cdot A\left(\frac{x+y}{2}\right)}u(x)-u(y)\right)\overline{\left(e^{-\i (x-y)\cdot A\left(\frac{x+y}{2}\right)}v(x)-v(y)\right)}}{|x-y|^{3+2s}}dxdy  \\
& =\frac{c_s}{2} {\Re}\int_{\R^6}\frac{\left(u(x)-e^{\i (x-y)\cdot A\left(\frac{x+y}{2}\right)}u(y)\right)\overline{\left(v(x)-e^{\i (x-y)\cdot A\left(\frac{x+y}{2}\right)}v(y)\right)}}{|x-y|^{3+2s}}dxdy.
\end{align*}
If $f\in H^{-s}_A(\R^3,\C)$, we say that $u\in H^s_A(\R^3,\C)$ is a weak solution to 
\begin{equation}
\label{problema}
(-\Delta)^s_Au+u=f,\quad\text{in $\R^3$,}
\end{equation}
if we have
\begin{align*}
&\frac{c_s}{2}{\Re}\int_{\R^6}\frac{\left(u(x)-e^{\i (x-y)\cdot A\left(\frac{x+y}{2}\right)}u(y)\right)\overline{\left(v(x)-e^{\i (x-y)\cdot A\left(\frac{x+y}{2}\right)}v(y)\right)}}{|x-y|^{3+2s}}dxdy  \\
&+{\Re}\int_{\R^3} u\overline{v} dx={\Re}\int_{\R^3} f\overline{v} dx,\quad \text{for all $v\in H^s_A(\R^3,\C)$}.
\end{align*}
Of course,  one can equivalently define the weak solution 
by testing over functions $v\in C^\infty_c(\R^3,\C)$.\\
On smooth functions, the operator $(-\Delta)^s_A $ admits the point-wise representation \eqref{operator}. To show this we need the following preliminary results.
\begin{lemma}
	\label{prelimll}
Let $K$ be a compact subset of $\R^3,$ $R>0$ and set $K'=\{x\in\R^3: d(x,K)\leq R\}$.
Assume that $f\in C^2(\R^6)$
and that $g\in C^{1,\gamma}(K')$
for some $\gamma\in [0,1]$.\
If $h(x,y)=f(x,y)g(y)$, then there exists a positive constant $C$ depending on $K,f,g,R$, such that
$$
|\nabla_y h(x,y_2)-\nabla_y h(x,y_1)|\leq C|y_2-y_1|^\gamma,
$$
for all $x\in K$ and every $y_2,y_1\in K'$.
\end{lemma}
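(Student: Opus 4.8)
The plan is purely computational: differentiate $h$ in $y$ by the Leibniz rule, telescope the difference $\nabla_y h(x,y_2)-\nabla_y h(x,y_1)$ into four terms, and bound each of them by the mean value theorem (for the terms in which $f$ or $g$ is differenced) or by the definition of a H\"older seminorm (for the term in which $\nabla g$ is differenced). Since the whole argument takes place over a bounded set, a Lipschitz bound is as good as a $\gamma$-H\"older one, and this is what makes the exponent in the conclusion come out correctly.

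First I would fix notation. Let $\overline B$ be a closed ball with $K'\subset\overline B$, and let $M_0,M_1,M_2$ be, respectively, upper bounds on $K\times\overline B$ for $|f|$, for $|\nabla f|$, and for the second-order partial derivatives of $f$ (all finite since $f\in C^2(\R^6)$); let $N_0,N_1$ be upper bounds on $K'$ for $|g|$ and $|\nabla g|$, and $[\nabla g]_\gamma$ the $\gamma$-H\"older seminorm of $\nabla g$ on $K'$ (all finite since $g\in C^{1,\gamma}(K')$). By the Leibniz rule $\nabla_y h(x,y)=g(y)\,\nabla_y f(x,y)+f(x,y)\,\nabla g(y)$, so for $x\in K$ and $y_1,y_2\in K'$
\begin{align*}
\nabla_y h(x,y_2)-\nabla_y h(x,y_1)
&=g(y_2)\big[\nabla_y f(x,y_2)-\nabla_y f(x,y_1)\big]+\big[g(y_2)-g(y_1)\big]\nabla_y f(x,y_1)\\
&\quad+f(x,y_2)\big[\nabla g(y_2)-\nabla g(y_1)\big]+\big[f(x,y_2)-f(x,y_1)\big]\nabla g(y_1).
\end{align*}
Estimating the increments of $\nabla_y f$, of $g$, and of $f$ by the mean value theorem along the segment $[y_1,y_2]\subset\overline B$, the first term is $\le N_0M_2|y_2-y_1|$, the second and fourth are each $\le M_1N_1|y_2-y_1|$, and the third is $\le M_0[\nabla g]_\gamma|y_2-y_1|^\gamma$ directly from the seminorm. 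Since $|y_2-y_1|=|y_2-y_1|^{1-\gamma}|y_2-y_1|^\gamma\le(\mathrm{diam}\,K')^{1-\gamma}|y_2-y_1|^\gamma$, summing yields $|\nabla_y h(x,y_2)-\nabla_y h(x,y_1)|\le C|y_2-y_1|^\gamma$ with $C:=(N_0M_2+2M_1N_1)(\mathrm{diam}\,K')^{1-\gamma}+M_0[\nabla g]_\gamma$, which depends only on $K,f,g,R$.

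There is no serious obstacle; the only point deserving a word is the use of the mean value inequality for $g$ on the possibly non-convex set $K'$. In the applications $K$, and hence $K'$, may be taken to be a ball, so the segment $[y_1,y_2]$ stays in $K'$ and the estimate is immediate; in general one first extends $g$ to a $C^{1,\gamma}$ function on $\overline B$ with norms controlled by those of $g$ (for instance via a Whitney-type extension) and runs the same computation there. The other thing to keep in mind is the elementary fact that on the bounded set $K'$ a Lipschitz bound entails a $\gamma$-H\"older bound, needed in order to match the third term, which for $\gamma<1$ is genuinely only $\gamma$-H\"older continuous.
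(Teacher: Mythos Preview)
Your argument is correct and is exactly the ``straightforward'' computation the paper has in mind; indeed the paper omits the proof entirely, so there is nothing to compare beyond noting that your Leibniz-rule-plus-telescoping approach is the intended one. Your remark about the possible non-convexity of $K'$ is well taken, and your proposed fixes (take $K$ a ball, or extend $g$) are both adequate; in fact in the only place the lemma is used (Lemma~\ref{controllo}) the points $y_1,y_2$ lie on a segment through a fixed $x\in K$ inside $B_R(x)\subset K'$, so the issue never actually arises.
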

\begin{proof}
The proof is omitted as it is straightforward.
\end{proof}


\begin{lemma}
	\label{controllo}
Let $A\in C^2(\R^3)$ and $u\in C^{1,\gamma}_{\rm loc}(\R^3,\C)$ for some $\gamma\in [0,1]$.\
Then, for any compact set $K\subset\R^3$ and $R>0$,
there exists a positive constant $C$ depending on $R,K,A,u$, such that
$$
|u_x(x+y)+u_x(x-y)-2u_x(x)|\leq C |y|^{1+\gamma},
$$
for every $x\in K$ and $y \in  B_R(0)$.
\end{lemma}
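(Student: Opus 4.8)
The plan is to reduce the estimate to the elementary fact that a $C^{1}$ function whose gradient is $\gamma$-Hölder continuous has a second symmetric difference of order $|y|^{1+\gamma}$, using Lemma~\ref{prelimll} to supply the required Hölder bound on the gradient.

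First I would fix $x\in K$ and regard $z\mapsto u_x(z)$ as a function of the single variable $z$. Setting $f(x,z):=e^{\i (x-z)\cdot A\left(\frac{x+z}{2}\right)}$ we have, in the notation \eqref{notation}, that $u_x(z)=h(x,z)$ with $h(x,z):=f(x,z)u(z)$. Since $A\in C^2(\R^3)$ and the maps $(x,z)\mapsto x-z$ and $(x,z)\mapsto \frac{x+z}{2}$ are smooth, the real scalar function $(x,z)\mapsto (x-z)\cdot A\left(\frac{x+z}{2}\right)$ is of class $C^2$ on $\R^6$, hence so are $\Re f$ and $\Im f$ (composition with the smooth function $t\mapsto e^{\i t}$). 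With $K'=\{z\in\R^3:d(z,K)\le R\}$ as in Lemma~\ref{prelimll} and $g:=u\in C^{1,\gamma}_{\rm loc}(\R^3,\C)$ restricted to the compact set $K'$, Lemma~\ref{prelimll} applied to the real and imaginary parts of $h$ yields a constant $C=C(R,K,A,u)>0$ such that
\[
|\nabla_z h(x,z_2)-\nabla_z h(x,z_1)|\le C\,|z_2-z_1|^{\gamma}\qquad\text{for all }x\in K,\ z_1,z_2\in K'.
\]

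Next, for $x\in K$ and $y\in B_R(0)$ every point $x\pm ty$ with $t\in[0,1]$ lies in $K'$, since $d(x\pm ty,K)\le t|y|\le R$. Applying the fundamental theorem of calculus to $t\mapsto h(x,x+ty)$ and to $t\mapsto h(x,x-ty)$ and adding the two identities (the boundary terms at $t=0$ cancel, because $u_x(x)=h(x,x)$), we obtain
\[
u_x(x+y)+u_x(x-y)-2u_x(x)=\int_0^1\big(\nabla_z h(x,x+ty)-\nabla_z h(x,x-ty)\big)\cdot y\,dt.
\]
Estimating the integrand by the displayed Hölder bound together with $|(x+ty)-(x-ty)|=2t|y|$ gives
\[
|u_x(x+y)+u_x(x-y)-2u_x(x)|\le \int_0^1 C\,(2t|y|)^{\gamma}\,|y|\,dt=\frac{2^{\gamma}C}{1+\gamma}\,|y|^{1+\gamma},
\]
which is the assertion with constant $2^{\gamma}C/(1+\gamma)$, depending only on $R,K,A,u$. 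The only points that need a word of care — and that are immediate — are the $C^2$-regularity of the phase factor $f$, guaranteed precisely by the hypothesis $A\in C^2(\R^3)$, and the fact that the segments $\{x\pm ty:t\in[0,1]\}$ stay inside the enlarged compact set $K'$; there is no genuine obstacle beyond these bookkeeping remarks.
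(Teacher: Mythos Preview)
Your proof is correct and follows essentially the same approach as the paper: both invoke Lemma~\ref{prelimll} with $f(x,z)=e^{\i(x-z)\cdot A((x+z)/2)}$ and $g=u$ to obtain the $\gamma$-H\"older bound on $\nabla_z u_x$, and then control the second symmetric difference by that bound. The only difference is that the paper applies the Mean Value Theorem (obtaining intermediate points $\tau_1,\tau_2$) while you use the integral representation via the fundamental theorem of calculus; your version is in fact slightly cleaner, since the equality form of the MVT does not hold for complex-valued functions.
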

\begin{proof}
	Fix a compact set $K\subset \R^3$ and $R>0$.\ Consider $x\in K$ and $y\in B_R(0)$.\
	Then, by the Mean Value Theorem, 
	there exist $\tau_1,\tau_2\in [0,1]$ such that
\begin{align*}
|u_x(x+y)+u_x(x-y)-2u_x(x)|&=|\nabla_y u_x(x+\tau_1 y)\cdot y- \nabla_y u_x(x-\tau_2 y)\cdot y|\\
& \leq |\nabla_y u_x(x+\tau_1 y)- \nabla_y u_x(x-\tau_2 y)||y|\leq C|y|^{1+\gamma},
\end{align*}
where in the last inequality we use Lemma~\ref{prelimll} with $f(x,y)=e^{\i (x-y)\cdot A\left(\frac{x+y}{2}\right)}$ and $g(y)=u(y).$
\end{proof}

\noindent
Thus in the case $u$ and $A$ are smooth enough, we have the following result
\begin{theorem}[Weak to strong solution]
	\label{weakstrong}
	Let $u\in H^s_A(\R^3,\C)$ be a weak solution to \eqref{problema}. Assume that
	$A\in C^2(\R^3)$ and that
	$$
	u\in L^\infty(\R^3,\C)\cap C^{1,\gamma}_{\rm loc}(\R^3,\C),\quad\text{for some 
		$\gamma\in (0,1]$ with $\gamma>2s-1$.}
	$$
	Then $u$ solves problem \eqref{problema} pointwise a.e.\ in $\R^3$.
\end{theorem}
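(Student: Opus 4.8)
\medskip
\noindent\emph{Strategy and Step 1 (the principal value is well defined).} The plan is first to make sense of the right‑hand side of \eqref{operator} pointwise, and then to identify the bilinear form defining the weak solution with the $L^2$ pairing against this pointwise object, tested against $C^\infty_c(\R^3,\C)$. For the first step, fixing $x$ and using the reflection $y\mapsto 2x-y$, which leaves $B^c_\eps(x)$ and $|x-y|$ invariant, one rewrites
\[
c_s\int_{B^c_\eps(x)}\frac{u(x)-u_x(y)}{|x-y|^{3+2s}}\,dy=\frac{c_s}{2}\int_{|z|>\eps}\frac{2u_x(x)-u_x(x+z)-u_x(x-z)}{|z|^{3+2s}}\,dz,
\]
with the notation \eqref{notation} and the identity $u_x(x)=u(x)$. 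By Lemma~\ref{controllo} the numerator is $O(|z|^{1+\gamma})$ as $z\to0$, uniformly for $x$ in a fixed compact set $K$, and it is everywhere bounded by $4\|u\|_{L^\infty}$; hence the integrand is dominated by $C_K\min\{|z|^{1+\gamma},1\}\,|z|^{-3-2s}$, which is integrable on $\R^3$ \emph{precisely} because $\gamma>2s-1$. Therefore the limit as $\eps\to0$ exists for every $x$ and defines a function $(-\Delta)^s_A u$ bounded on compact sets, and the truncations $L_\eps(x):=c_s\int_{B^c_\eps(x)}(u(x)-u_x(y))|x-y|^{-3-2s}\,dy$ converge to $(-\Delta)^s_A u(x)$ pointwise while staying bounded on each compact set uniformly in $\eps$.

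\medskip
\noindent\emph{Step 2 (from the bilinear form to the pointwise pairing, and conclusion).} Fix $v\in C^\infty_c(\R^3,\C)$. For fixed $\eps>0$ the integral of $(u(x)-u_x(y))\overline{(v(x)-v_x(y))}\,|x-y|^{-3-2s}$ over $\{|x-y|>\eps\}$ is absolutely convergent, so one can expand the numerator, use $u_x(y)\overline{v_x(y)}=u(y)\overline{v(y)}$, and regroup it as $(u(x)-u_x(y))\overline{v(x)}+(u(y)-u_y(x))\overline{v(y)}$; since $e^{\i(y-x)\cdot A((x+y)/2)}=\overline{e^{\i(x-y)\cdot A((x+y)/2)}}$, the substitution $x\leftrightarrow y$ carries the second summand onto the first, and Fubini (legitimate because $v$ has compact support) shows that $\tfrac{c_s}{2}\Re$ of this double integral equals $\Re\int_{\R^3}\overline{v(x)}\,L_\eps(x)\,dx$. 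Letting $\eps\to0$: on one side, $\tfrac{c_s}{2}\Re$ of the double integral over $\{|x-y|>\eps\}$ tends to $\langle(-\Delta)^s_A u,v\rangle$ by dominated convergence, the integrand being bounded by $\tfrac{1}{2}\big(|u(x)-u_x(y)|^2+|v(x)-v_x(y)|^2\big)|x-y|^{-3-2s}\in L^1(\R^6)$; on the other side, by Step~1 one has $L_\eps\to(-\Delta)^s_A u$ pointwise with a bound on $\supp v$ uniform in $\eps$, whence $\Re\int_{\R^3}\overline{v(x)}L_\eps(x)\,dx\to\Re\int_{\R^3}(-\Delta)^s_A u(x)\overline{v(x)}\,dx$. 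Thus $\langle(-\Delta)^s_A u,v\rangle=\Re\int_{\R^3}(-\Delta)^s_A u\,\overline v\,dx$; inserting this into the weak formulation of \eqref{problema} yields $\Re\int_{\R^3}\big((-\Delta)^s_A u+u-f\big)\overline v\,dx=0$ for all $v\in C^\infty_c(\R^3,\C)$, and the same with $\i v$ in place of $v$ removes the imaginary part, so $(-\Delta)^s_A u+u=f$ a.e.

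\medskip
\noindent\emph{Main obstacle.} I expect Step 2 to be the delicate part. After expanding, each of the four terms of the integrand diverges like $\eps^{-2s}$ as $\eps\to0$, so the cancellation cannot be read off term by term and must be organized through the $x\leftrightarrow y$ symmetry together with the conjugation symmetry of the magnetic phase \emph{before} passing to the limit; likewise, the dominated‑convergence step on the right‑hand side hinges on the uniform‑in‑$\eps$ control of $L_\eps$, which is exactly what the symmetrization of Step~1 provides and where the hypothesis $\gamma>2s-1$ is genuinely used.
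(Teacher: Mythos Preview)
Your proof is correct and follows essentially the same approach as the paper: both arguments symmetrize the truncated principal value via the second difference $2u_x(x)-u_x(x+z)-u_x(x-z)$ and invoke Lemma~\ref{controllo} together with $\gamma>2s-1$ to obtain a dominated, locally uniform bound on $L_\eps$, then rewrite the truncated bilinear form via Fubini and the $x\leftrightarrow y$ swap (using $u_x(y)\overline{v_x(y)}=u(y)\overline{v(y)}$) as $\Re\int \overline v\,L_\eps$, and finally pass to the limit by dominated convergence on both sides. The only cosmetic difference is that the paper works with $g_\eps=\tfrac12 L_\eps$ and phrases the last passage as $g_\eps\to\tfrac12(-\Delta)^s_A u$ in $L^1(K)$ rather than pointwise with a uniform bound, which is equivalent for the purpose of the final limit.
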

\begin{proof}
With the notation introduced in \eqref{notation}, the definition of weak solution writes as
\begin{equation}
\label{varf-2}
\frac{c_s}{2}{\Re}\int_{\R^6}\frac{\left(u_x(x)-u_x(y)\right)\overline{\left(v_x(x)-v_x(y)\right)}}{|x-y|^{3+2s}}dxdy  
+{\Re}\int_{\R^3} u\overline{v} dx={\Re}\int_{\R^3} f\overline{v} dx,
\end{equation}
for all $v\in C^\infty_c(\R^3,\C)$.\
Let us fix a $v\in C^\infty_c(\R^3,\C)$ and set $K:={\rm supp}(v)$.\
Now, for any $\eps>0$, we introduce the auxiliary function
$g_\eps:K\to\R$ defined by
$$
g_\eps(x):=\frac{c_s}{2}\int_{\R^3}\frac{u_x(x)-u_x(y)}{|x-y|^{3+2s}}1_{B^c_\eps(x)}(y)dy.
$$
Note that for all $x\in K$ we have that
\begin{equation}
\label{puntgeps}
g_\eps (x)\to \frac{1}{2}(-\Delta)^s_A u (x),\quad\text{as $\eps\to 0$ whenever the limit exists}.
\end{equation}
Simple changes of variables show that $g_\eps$ can be equivalently written as
$$
g_\eps(x)=-\frac{c_s}{4}\int_{\R^3}
\frac{u_x(x+y)+u_x(x-y)-2u_x(x)}{|y|^{3+2s}}1_{B^c_\eps(0)}(y)dy.
$$
Furthermore, by Lemma~\ref{controllo}, there exist $C>0$ and $R>0$ such that
$$
|u_x(x+y)+u_x(x-y)-2u_x(x)|\leq C |y|^{1+\gamma},\quad \text{for $x\in K$
and $y \in  B_R(0)$}.
$$
Therefore, taking into account that
$|u_x(y)|\leq \|u\|_{L^\infty}$ for all $y\in\R^3$, 
we have the inequality
$$
\frac{|u_x(x+y)+u_x(x-y)-2u_x(x)|}{|y|^{3+2s}}\leq \frac{C}{|y|^{2+2s-\gamma}}1_{B_R(0)}(y)+
\frac{C}{|y|^{3+2s}}1_{B^c_R(0)}(y),
$$
for some constant $C$.\
Due to the assumption $\gamma>2s-1$, the right hand side belongs to $L^1(\R^3)$.
Then, by dominated convergence,  the limit of $g_\eps(x)$ as $\eps\to 0$
exists a.e.\ in $K$ and it is thus equal to $\frac{1}{2}(-\Delta)^s_A u (x)$ by \eqref{puntgeps}. Since
also $|g_\eps(x)|\leq C$ a.e.\ in $K$, again the dominated convergence yields
\begin{equation}
\label{loc-conv}
g_\eps\to \frac{1}{2}(-\Delta)^s_A u,\quad\text{strongly in $L^1(K)$}.
\end{equation}
Now, the first term in formula \eqref{varf-2} can be treated as follows
\begin{align*}
& \frac{c_s}{2}\int_{\R^6}\frac{\left(u_x(x)-u_x(y)\right)
	\overline{\left(v_x(x)-v_x(y)\right)}}{|x-y|^{3+2s}}dxdy \\
& =\lim_{\eps\to 0}\frac{c_s}{2}\int_{\R^6} \frac{\left(u_x(x)-u_x(y)\right)
	\overline{\left(v_x(x)-v_x(y)\right)}}{|x-y|^{3+2s}}1_{B^c_\eps(x)}(y)dxdy \\
&  =\lim_{\eps\to 0}\Big(\int_{\R^3} g_\eps(x)\overline{v(x)}dx 
-\frac{c_s}{2}\int_{\R^6}\frac{\left(u_x(x)-u_x(y)\right)
	\overline{v_x(y)}}{|x-y|^{3+2s}}1_{B^c_\eps(x)}(y)dxdy\Big).
\end{align*}
By Fubini Theorem on the second term of the last equality, switching
the two variables and observing that
$$-\left(u_y(y)-u_y(x)\right)
	\overline{v_y(x)}1_{B^c_\eps(y)}(x)
=(u_x(x)-u_x(y))\overline{v(x)}1_{B^c_\eps(x)}(y)$$
yields
\begin{align*}
\frac{c_s}{2}\int_{\R^6}\frac{\left(u_x(x)-u_x(y)\right)
	\overline{\left(v_x(x)-v_x(y)\right)}}{|x-y|^{3+2s}}dxdy
=\lim_{\eps\to 0}\int_{\R^3} 2g_\eps(x)\overline{v(x)}dx=\int_{\R^3} (-\Delta)^s_A u(x)\overline{v(x)}dx,
\end{align*}
where we used \eqref{loc-conv} in the last equality. Then, from formula~\eqref{varf-2},
we conclude that
$$ 
\Re\Big(\int_{\R^3} \big((-\Delta)^s_A u+u-f\big)\overline{v} dx\Big)=0,
\quad\text{for all $v\in C^\infty_c(\R^3,\C)$,}
$$
yielding $(-\Delta)^s_A u+u=f$ a.e.\ in $\R^3$. The proof is complete.
\end{proof}

\noindent
We conclude the section with an observation about the formal consistency of the
spaces $H^s_A(\R^3,\C)$, up to suitably correcting the norm, with the usual local Sobolev spaces without magnetic field
in the singular limit as $s\to 1$ and $A\to 0$ pointwise. Consider the modified norm
	$$
	|||u|||_{s,A}:=\big(\|u\|_{L^2}^2+(1-s)[u]_{s,A}^2\big)^{1/2}.
	$$
	By arguing as in the proof of Lemma~\ref{livelli},	it follows that
	$$
	\lim_{A\to 0}[u]_{s,A}^2=[u]_{s,0}^2, \quad \text{for all $u\in C^\infty_c(\R^3,\C)$}. 
	$$
	Moreover, from the results of Brezis-Bourgain-Mironescu \cite{bourg,bourg2}, 
	we know that
	$$
	\lim_{s\to 1}(1-s)[u]_{s,0}^2=\|\nabla u\|_{L^2}^2, 
	\quad \text{for all $u\in C^\infty_c(\R^3,\C)$}. 
	$$
	In conclusion
	$$
	\lim_{s\to 1}
	\lim_{A\to 0}
	|||u|||_{s,A}=\|u\|_{H^1(\R^3)},
	\quad \text{for all $u\in C^\infty_c(\R^3,\C)$}. 
	$$
	Hence $|||u|||_{s,A}$ approximates the $H^1$-norm for $s\sim 1$ and $A\sim 0$.

\section{Preliminary stuff}\label{sec3}

In this section we provide some technical facts about the functional setting of the problem as well as some preliminary results about the Concentration-Compactness procedure.

\begin{lemma}[Diamagnetic inequality]
	\label{diamagnetic}
For every $u\in H^s_A(\R^3,\C)$ it holds $|u|\in H^s(\R^3)$. More precisely
$$
\| |u|\|_{s}\leq \|u\|_{s,A},\quad\text{for every  $u\in H^s_A(\R^3,\C)$.}
$$
\end{lemma}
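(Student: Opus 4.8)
The plan is to derive the inequality from an elementary pointwise estimate for complex numbers and then integrate; the membership $|u|\in H^s(\R^3)$ will then come essentially for free. First I would fix $u\in H^s_A(\R^3,\C)$ and use the notation introduced in \eqref{notation}. Since $e^{\i (x-y)\cdot A\left(\frac{x+y}{2}\right)}$ has unit modulus and $u_x(x)=u(x)$, one has
$$
\Big|e^{-\i (x-y)\cdot A\left(\frac{x+y}{2}\right)}u(x)-u(y)\Big|=|u_x(x)-u_x(y)|,\qquad\text{for a.e. }(x,y)\in\R^6.
$$
The key step is the pointwise diamagnetic inequality: for a.e.\ $(x,y)$, setting $z:=u_x(x)$ and $w:=u_x(y)$, one has $|z|=|u(x)|$ and $|w|=|u(y)|$, so the reverse triangle inequality $\big||z|-|w|\big|\le|z-w|$ yields
$$
\big|\,|u(x)|-|u(y)|\,\big|\le|u_x(x)-u_x(y)|.
$$

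Next I would square this bound, divide by $|x-y|^{3+2s}$ and integrate over $\R^6$, obtaining
$$
\frac{c_s}{2}\int_{\R^6}\frac{\big|\,|u(x)|-|u(y)|\,\big|^2}{|x-y|^{3+2s}}\,dxdy\le\frac{c_s}{2}\int_{\R^6}\frac{|u_x(x)-u_x(y)|^2}{|x-y|^{3+2s}}\,dxdy=[u]_{s,A}^2<\infty .
$$
Combining this with the obvious identity $\||u|\|_{L^2}=\|u\|_{L^2}$ and recalling that $\|v\|_s^2=\|v\|_{L^2}^2+\frac{c_s}{2}\int_{\R^6}|v(x)-v(y)|^2|x-y|^{-3-2s}\,dxdy$, I get
$$
\| |u|\|_s^2=\|u\|_{L^2}^2+\frac{c_s}{2}\int_{\R^6}\frac{\big|\,|u(x)|-|u(y)|\,\big|^2}{|x-y|^{3+2s}}\,dxdy\le\|u\|_{L^2}^2+[u]_{s,A}^2=\|u\|_{s,A}^2,
$$
which is the asserted estimate; in particular $\| |u|\|_s<\infty$.

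Finally, to conclude $|u|\in H^s(\R^3)$, understood as the closure of $C^\infty_c(\R^3)$ in $\|\cdot\|_s$, I would invoke the standard fact that for $s\in(0,1)$ the Gagliardo space $\{v\in L^2(\R^3):\|v\|_s<\infty\}$ coincides with that closure — for instance by mollifying and then cutting off $|u|$ to build smooth compactly supported functions converging in $\|\cdot\|_s$. I do not expect any genuine obstacle here: the whole content of the lemma is the one-line pointwise inequality above, and the only points deserving a word of care are the measurability of $(x,y)\mapsto|u_x(x)-u_x(y)|$ (which follows from continuity of $A$ and measurability of $u$) and the passage from a finite Gagliardo norm to membership in the closure of $C^\infty_c(\R^3)$, both of which are routine.
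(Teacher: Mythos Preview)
Your proof is correct and follows essentially the same route as the paper: both arguments reduce to the pointwise inequality $\big||u(x)|-|u(y)|\big|\le\big|e^{-\i (x-y)\cdot A\left(\frac{x+y}{2}\right)}u(x)-u(y)\big|$ and then integrate, the only cosmetic difference being that the paper obtains this bound by expanding the squared modulus and using $\Re\big(e^{-\i(x-y)\cdot A}u(x)\overline{u(y)}\big)\le |u(x)||u(y)|$, whereas you phrase it as the reverse triangle inequality for $z=u_x(x)$, $w=u_x(y)$. Your extra remark on passing from a finite Gagliardo norm to membership in the closure of $C^\infty_c$ is a harmless addendum that the paper omits.
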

\begin{proof}
For a.e.\ $x,y\in \R^3$ we have
$$
\Re \big(e^{-\i (x-y)\cdot A\left(\frac{x+y}{2}\right)} u(x)\overline{u(y)}\big)\leq |u(x)||u(y)|.
$$
Therefore, we have
\begin{align*}
|e^{-\i (x-y)\cdot A\left(\frac{x+y}{2}\right)}u(x)-u(y)|^2 &=|u(x)|^2+|u(y)|^2-2\Re \big(e^{-\i (x-y)\cdot A\left(\frac{x+y}{2}\right)} u(x)\overline{u(y)}\big) \\
& \geq |u(x)|^2+|u(y)|^2-2|u(x)||u(y)|=||u(x)|-|u(y)||^2,
\end{align*}
which immediately yields the assertion.	
\end{proof}

\begin{remark}[Pointwise Diamagnetic inequality]\label{diamremark}
\rm
There holds 
$$
||u(x)|-|u(y)||\leq
|e^{-\i (x-y)\cdot A\left(\frac{x+y}{2}\right)}u(x)-u(y)|,\quad\text{for a.e.\ $x,y\in\R^3$.}
$$
\end{remark}

\noindent
We have the following local embedding of $H_A^s(\R^3,\C)$.
\begin{lemma}[Local embedding in $H^s(\R^3,\C)$]
	\label{localemb}
	For every compact set $K\subset \R^3,$ the space $H^s_A(\R^3,\C)$
	is continuously embedded into $H^s(K,\C)$.
\end{lemma}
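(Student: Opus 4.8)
The goal is to exhibit, for each compact $K\subset\R^3$, a constant $C=C(K,s,A)>0$ such that $\|u\|_{H^s(K)}\le C\,\|u\|_{s,A}$ for every $u\in H^s_A(\R^3,\C)$. The $L^2$-contribution to $\|u\|_{H^s(K)}$ is harmless, since $\int_K|u|^2\,dx\le\|u\|_{L^2}^2\le\|u\|_{s,A}^2$; so the only real task is to bound the localized non-magnetic Gagliardo term $\int_{K\times K}|u(x)-u(y)|^2|x-y|^{-(3+2s)}\,dxdy$ by the magnetic seminorm $[u]_{s,A}^2$ plus a multiple of $\|u\|_{L^2}^2$.

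\emph{The pointwise core of the argument.} Set $\Phi(x,y):=(x-y)\cdot A\!\left(\tfrac{x+y}{2}\right)$, which is a real number. From the elementary identity $u(x)-u(y)=\bigl(e^{-\i\Phi(x,y)}u(x)-u(y)\bigr)-\bigl(e^{-\i\Phi(x,y)}-1\bigr)u(x)$ together with $|e^{-\i\Phi}-1|\le|\Phi|\le|x-y|\,\bigl|A(\tfrac{x+y}{2})\bigr|$ one gets, for a.e.\ $x,y\in\R^3$,
$$
|u(x)-u(y)|^2\le 2\,\bigl|e^{-\i\Phi(x,y)}u(x)-u(y)\bigr|^2+2\,\Bigl|A\!\left(\tfrac{x+y}{2}\right)\Bigr|^2|x-y|^2\,|u(x)|^2 .
$$
If $K\subset B_{R_0}(0)$, then for $x,y\in K$ the midpoint $\tfrac{x+y}{2}$ also lies in $B_{R_0}(0)$, so continuity of $A$ gives $M_K:=\sup_{B_{R_0}(0)}|A|<\infty$. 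Dividing the above by $|x-y|^{3+2s}$, integrating over $K\times K$, and using that the magnetic integrand is nonnegative so that its integral over $K\times K$ is at most its integral over $\R^6$, namely $\tfrac{2}{c_s}[u]_{s,A}^2$, we obtain
$$
\int_{K\times K}\frac{|u(x)-u(y)|^2}{|x-y|^{3+2s}}\,dxdy\le \frac{4}{c_s}[u]_{s,A}^2+2M_K^2\int_{K}|u(x)|^2\Bigl(\int_K\frac{dy}{|x-y|^{1+2s}}\Bigr)dx .
$$

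The remaining inner integral is where the only genuine analytic point lies: for $x\in K$ we have $K\subset\overline{B_{\operatorname{diam}(K)}(x)}$, hence $\int_K|x-y|^{-(1+2s)}\,dy\le\int_{B_{\operatorname{diam}(K)}(0)}|z|^{-(1+2s)}\,dz=4\pi\int_0^{\operatorname{diam}(K)}r^{1-2s}\,dr=:c_K<\infty$, the finiteness being exactly the place where $s<1$ (so $1-2s>-1$, and there is no non-integrable singularity at the diagonal) is used. Collecting the estimates,
$$
\|u\|_{H^s(K)}^2\le \|u\|_{L^2}^2+2[u]_{s,A}^2+c_s M_K^2 c_K\,\|u\|_{L^2}^2\le C(K)\,\|u\|_{s,A}^2 ,
$$
which is the assertion. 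Everything apart from the near-diagonal integrability just mentioned is elementary algebra plus the trivial domain comparison for the nonnegative magnetic integrand; if one prefers, the inequality can first be proved for $u\in C^\infty_c(\R^3,\C)$ and then extended by density to all of $H^s_A(\R^3,\C)$, but the a.e.\ pointwise bound makes the direct argument above legitimate as written.
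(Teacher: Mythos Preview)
Your proof is correct and follows essentially the same route as the paper's: write $u(x)-u(y)$ as the magnetic difference minus $(e^{-\i\Phi}-1)u(x)$, then control the error term using that $A$ is locally bounded so that $|e^{-\i\Phi(x,y)}-1|\le C|x-y|$ on $K\times K$. The only cosmetic difference is that the paper splits the error integral $J$ into the regions $|x-y|\le 1$ and $|x-y|\ge 1$, whereas you observe directly that $\int_K |x-y|^{-(1+2s)}\,dy$ is finite over the whole compact $K$; your version is slightly more streamlined, but the idea is identical.
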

\begin{proof}
Fixed a compact $K\subset\R^3$, we have
\begin{align*}
\|u\|_{H^s(K)}^2 &=\int_{K} |u(x)|^2dx+
\frac{c_s}{2}\int_{K\times K}\frac{|u(x)-u(y)|^2}{|x-y|^{3+2s}}dxdy \\
& \leq \int_{\R^3} |u(x)|^2dx+
C\int_{K\times K}\frac{|e^{-\i (x-y)\cdot A\left(\frac{x+y}{2}\right)}u(x)-u(y)|^2}{|x-y|^{3+2s}}dxdy \\
&\quad +C\int_{K\times K}\frac{|u(x)|^2|e^{-\i (x-y)\cdot A\left(\frac{x+y}{2}\right)}-1|^2}{|x-y|^{3+2s}}dxdy \\
& \leq C \|u\|_{s,A}^2+CJ,
\end{align*}	
where we have set 
$$
J:=\int_{K\times K}\frac{|u(x)|^2|e^{-\i (x-y)\cdot A\left(\frac{x+y}{2}\right)}-1|^2}{|x-y|^{3+2s}}dxdy.
$$ 
We now prove that $J\leq C\|u\|_{L^2}^2$, which ends the proof.
We have
\begin{align*}
J &=\int_{K}|u(x)|^2\int_{K\cap \{|x-y|\geq 1\}}\frac{|e^{-\i (x-y)\cdot A\left(\frac{x+y}{2}\right)}-1|^2}{|x-y|^{3+2s}}dxdy  \\
&\quad+\int_{K}|u(x)|^2\int_{K\cap \{|x-y|\leq 1\}}\frac{|e^{-\i (x-y)\cdot A\left(\frac{x+y}{2}\right)}-1|^2}{|x-y|^{3+2s}}dxdy \\
&\leq C\int_{K}|u(x)|^2\int_{K\cap \{|x-y|\geq 1\}}\frac{1}{|x-y|^{3+2s}}dxdy  \\
&\quad+C\int_{K}|u(x)|^2\int_{K\cap \{|x-y|\leq 1\}}\frac{1}{|x-y|^{1+2s}}dxdy,
\end{align*}
where in the last line we used that 
$$
|e^{-\i (x-y)\cdot A\left(\frac{x+y}{2}\right)}-1|^2
\leq C|x-y|^2,\quad \text{for $|x-y|\leq 1$,\,\, $x,y\in K$,}
$$ 
since $A$ is locally bounded. The proof is now complete.
\end{proof}
\begin{lemma}
	\label{gen-bound}
Let $\{A_n\}_{n\in\N}$ be a sequence of uniformly locally bounded functions 
$A_n:\R^3\to\R^3$ with locally bounded gradient and, for any $n\in\N$, $u_n\in H^s_{A_n}(\R^3,\C)$ be such that 
$$
\sup_{n\in\N} \|u_n\|_{s,A_n}<\infty.
$$
Then, up to a subsequence, $\{u_n\}_{n\in\N}$ converges strongly to some function $u$ in $L^q(K,\C)$ for
every compact set $K$ and any $q\in[1,6/(3-2s))$.
\end{lemma}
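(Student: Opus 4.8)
The plan is to deduce the claim from the classical fractional Rellich--Kondrachov compactness on bounded domains, after establishing a version of the local embedding of Lemma~\ref{localemb} with constants \emph{uniform} in $n$, and then to conclude by a diagonal extraction over an exhaustion of $\R^3$ by balls.

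\textbf{Uniform local bound.} First I would revisit the proof of Lemma~\ref{localemb}: the constant $C$ there depends only on the compact set, on $s$, and on an $L^\infty$-bound for $A$ on a fixed neighbourhood of the compact set — it enters solely through the elementary inequality $|e^{-\i(x-y)\cdot A(\frac{x+y}{2})}-1|^2\le C|x-y|^2$ for $x,y\in K$ with $|x-y|\le 1$ (combined with the trivial bound $\le 4$ for $|x-y|\ge 1$). Since $\{A_n\}$ is uniformly locally bounded, for each compact $K\subset\R^3$ there is a constant $C_K$, independent of $n$, with $\|u_n\|_{H^s(K)}\le C_K\|u_n\|_{s,A_n}\le C_K M$, where $M:=\sup_n\|u_n\|_{s,A_n}<\infty$. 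In particular $\{u_n\}$ is bounded in $H^s(B_R(0),\C)$ for every $R>0$.

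\textbf{Compactness and diagonal argument.} By the fractional Rellich--Kondrachov theorem applied to real and imaginary parts, $H^s(B_R(0))\hookrightarrow L^q(B_R(0))$ compactly for every $q\in[1,6/(3-2s))$; hence $\{u_n\}$ is relatively compact in $L^q(B_R(0),\C)$ for each fixed $R$. Fixing an increasing sequence $q_j\nearrow 6/(3-2s)$ and extracting diagonally over $(k,j)\in\N\times\N$ (with $R=k$, $q=q_j$), we obtain a subsequence, still denoted $\{u_n\}$, and a measurable $u:\R^3\to\C$ with $u_n\to u$ in $L^{q_j}(B_k(0),\C)$ for all $j,k$. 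Since every compact $K$ lies in some $B_k(0)$ and every admissible exponent $q$ satisfies $q\le q_j$ for some $j$ — so that Hölder on the bounded set $B_k(0)$ gives $\|v\|_{L^q(B_k)}\le C\|v\|_{L^{q_j}(B_k)}$ — it follows that $u_n\to u$ in $L^q(K,\C)$ for every compact $K$ and every $q\in[1,6/(3-2s))$.

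\textbf{Main obstacle.} The only step requiring real care is the uniformity in $n$ of the constant in Lemma~\ref{localemb}; this is exactly where the hypothesis that $\{A_n\}$ is \emph{uniformly} locally bounded is used. The assumption on the gradients of $A_n$ plays no role here beyond guaranteeing that the spaces $H^s_{A_n}(\R^3,\C)$ are well defined. Everything else — the compact fractional Sobolev embedding on balls and the diagonal extraction — is routine.
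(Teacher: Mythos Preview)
Your proof is correct and follows essentially the same route as the paper: the paper's argument reads simply ``Arguing as in the proof of Lemma~\ref{localemb}, the assertion follows by \cite[Corollary 7.2]{DPV}'', which is precisely your uniform local bound (exploiting the uniform local boundedness of the $A_n$) combined with the fractional Rellich--Kondrachov compact embedding. Your explicit diagonal extraction over balls and exponents, and your remark that the gradient hypothesis on the $A_n$ is used only to ensure the spaces $H^s_{A_n}$ are well defined, merely spell out what the paper leaves implicit.
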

\begin{proof}
Arguing as in the proof of Lemma~\ref{localemb}, the assertion follows by 
\cite[Corollary 7.2]{DPV}.
\end{proof}

\begin{lemma}[Magnetic Sobolev embeddings]
	\label{embedd}
	The injection
	$$
	H^s_A(\R^3,\C)\hookrightarrow L^p(\R^3,\C) 
	$$
	is continuous for every $2\leq p\leq \frac{6}{3-2s}$. Furthermore, the injection
	$$
	H^s_A(\R^3,\C)\hookrightarrow L^p(K,\C)
	$$ 
	is compact for every $1\leq p<\frac{6}{3-2s}$ and any compact set $K\subset \R^3$.
\end{lemma}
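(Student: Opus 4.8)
The plan is to reduce both assertions to the diamagnetic inequality (Lemma~\ref{diamagnetic}) together with facts already established for the non-magnetic fractional spaces, so that essentially no new work is needed.

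For the \emph{continuity}, I would take $2\le p\le 6/(3-2s)$ and $u\in H^s_A(\R^3,\C)$, invoke Lemma~\ref{diamagnetic} to get $|u|\in H^s(\R^3)$ with $\||u|\|_s\le\|u\|_{s,A}$, and then apply the classical fractional Sobolev embedding $H^s(\R^3)\hookrightarrow L^p(\R^3)$ (see \cite{DPV}) to the nonnegative real function $|u|$, obtaining
$$
\|u\|_{L^p}=\||u|\|_{L^p}\le C_p\||u|\|_s\le C_p\|u\|_{s,A}.
$$
This is the whole argument for the first part.

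For the \emph{compactness}, I would fix a compact set $K\subset\R^3$, an exponent $1\le p<6/(3-2s)$, and a bounded sequence $\{u_n\}_{n\in\N}$ in $H^s_A(\R^3,\C)$, and then apply Lemma~\ref{gen-bound} with the constant sequence $A_n\equiv A$: since the standing hypothesis makes $A$ locally bounded with locally bounded gradient, $\{A_n\}$ is uniformly locally bounded with locally bounded gradient, and $\sup_n\|u_n\|_{s,A_n}=\sup_n\|u_n\|_{s,A}<\infty$, so Lemma~\ref{gen-bound} yields a subsequence converging strongly in $L^q(K,\C)$ for every $q\in[1,6/(3-2s))$, in particular in $L^p(K,\C)$. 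Alternatively, to be more self-contained, I would pick a ball $B_R\supset K$, use Lemma~\ref{localemb} to embed $H^s_A(\R^3,\C)$ continuously into $H^s(B_R,\C)$, and then invoke the fractional Rellich--Kondrachov theorem on $B_R$ (\cite[Corollary 7.2]{DPV}) to pass from $H^s(B_R,\C)$ compactly into $L^p(B_R,\C)\supset L^p(K,\C)$.

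Since all the magnetic features have already been absorbed in Lemmas~\ref{diamagnetic}, \ref{localemb} and \ref{gen-bound}, there is no real obstacle; the only point requiring a moment's care is that the fractional compact embedding is usually stated on regular bounded domains, which is precisely why I would route the compactness argument through a ball $B_R$ containing $K$ rather than working directly with the localized Gagliardo norm on the possibly irregular set $K$.
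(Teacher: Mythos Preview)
Your proposal is correct and essentially matches the paper's own proof: the paper likewise obtains the continuous embedding via the diamagnetic inequality (Remark~\ref{diamremark}) and the classical $H^s\hookrightarrow L^{6/(3-2s)}$ embedding followed by interpolation, and obtains the compact embedding exactly as in your second alternative, by combining Lemma~\ref{localemb} with \cite[Corollary~7.2]{DPV}. Your additional remark about routing through a ball $B_R\supset K$ is a nice touch, but nothing beyond this is needed.
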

\begin{proof}
By combining Remark~\ref{diamremark} with the continuous injection $H^s(\R^3)\hookrightarrow L^{6/(3-2s)}(\R^3)$ 
(see \cite[Theorem 6.5]{DPV}) yields
\begin{equation}
\label{sobol}
\|u\|_{L^{6/(3-2s)}(\R^3)}\leq C \Big(\int_{\R^6}\frac{|e^{-\i (x-y)\cdot A\left(\frac{x+y}{2}\right)}u(x)-u(y)|^2}{|x-y|^{3+2s}}dxdy\Big)^{1/2}
\quad\text{for all $u\in H^s_A(\R^3,\C)$}.
\end{equation}
Whence, by  interpolation the first assertion immediately follows.\ For the compact embedding, 
taking into account Lemma~\ref{localemb}, the assertion follows by \cite[Corollary 7.2]{DPV}.
\end{proof}

\begin{lemma}[Vanishing]
	\label{lionslemma}
	Let $\{u_{n}\}_{n\in\N}$ be a bounded sequence in $H^{s}(\mathbb R^{3})$ and 
	assume that, for some $R>0$ and $2\leq q <  \frac{6}{3-2s}$, there holds 
	$$
	\lim_n\sup_{\xi\in \R^3}\int_{B(\xi,R)}|u_{n}|^{q}dx=0.
	$$
	Then $u_{n}\to 0$ in $L^{p}(\mathbb R^{3})$ for $2< p <\frac{6}{3-2s}$.
\end{lemma}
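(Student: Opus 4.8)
The plan is to run the classical Lions vanishing argument in the fractional setting, using a locally finite covering of $\R^3$ by balls of radius $R$ together with the fractional Sobolev inequality on each ball. Throughout write $2^{*}_{s}:=\tfrac{6}{3-2s}$.

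First I would reduce to the case $q=2$. Since $2\le q<2^{*}_{s}$, Hölder's inequality gives, for every $\xi\in\R^3$,
$$
\int_{B(\xi,R)}|u_n|^2\,dx\le |B(0,R)|^{\,1-2/q}\Big(\int_{B(\xi,R)}|u_n|^q\,dx\Big)^{2/q},
$$
so the hypothesis forces $\delta_n^2:=\sup_{\xi\in\R^3}\int_{B(\xi,R)}|u_n|^2\,dx\to0$. Then I fix a countable covering $\{B(y_i,R)\}_{i\in\N}$ of $\R^3$ such that no point of $\R^3$ belongs to more than $N=N(3)$ of the balls. Two bookkeeping estimates, both consequences of the bounded overlap, will be used repeatedly: $\sum_i\int_{B(y_i,R)}|u_n|^2\,dx\le N\|u_n\|_{L^2}^2$, and
$$
\sum_i\|u_n\|_{H^s(B(y_i,R))}^2\le C\|u_n\|_s^2=:M^2,
$$
the latter because $x,y\in B(y_i,R)$ forces $|x-y|<2R$, so that a given pair $(x,y)$ is counted in at most $N$ of the localized Gagliardo seminorms; both right-hand sides are bounded by hypothesis.

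Next I would estimate on a single ball. Set $p_1:=\tfrac{2(2^{*}_{s}-p)}{2^{*}_{s}-2}$ and $p_2:=\tfrac{2^{*}_{s}(p-2)}{2^{*}_{s}-2}$, so that $p_1+p_2=p$, $0<p_1\le2$ and $p_2>0$. Interpolating $L^p$ between $L^2$ and $L^{2^{*}_{s}}$ on $B(y_i,R)$ and then using the fractional Sobolev inequality on the ball (see \cite{DPV} and Lemma~\ref{localemb}), whose constant is independent of $i$ by translation invariance, yields
$$
\|u_n\|_{L^p(B(y_i,R))}^p\le C\Big(\int_{B(y_i,R)}|u_n|^2\,dx\Big)^{p_1/2}\|u_n\|_{H^s(B(y_i,R))}^{p_2}.
$$
Since the balls cover $\R^3$, one has $\|u_n\|_{L^p(\R^3)}^p\le\sum_i\|u_n\|_{L^p(B(y_i,R))}^p$, and it remains to sum the right-hand side. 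If $p_2\ge2$ I would bound $\|u_n\|_{H^s(B(y_i,R))}^{p_2}\le M^{p_2-2}\|u_n\|_{H^s(B(y_i,R))}^2$ and $\big(\int_{B(y_i,R)}|u_n|^2\big)^{p_1/2}\le\delta_n^{\,p_1}$, so that the second bookkeeping estimate gives $\|u_n\|_{L^p(\R^3)}^p\le C M^{p_2-2}\delta_n^{\,p_1}$. If $p_2<2$ I would instead apply Hölder's inequality for series with the conjugate exponents $\tfrac{2}{2-p_2}$ and $\tfrac{2}{p_2}$; using $p\ge2$ one checks that the power of $\int_{B(y_i,R)}|u_n|^2$ that then appears is $\ge1$, and, invoking both bookkeeping estimates, one arrives at $\|u_n\|_{L^p(\R^3)}^p\le C\delta_n^{\,p-2}$. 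In either case the exponent of $\delta_n$ is strictly positive, hence $u_n\to0$ in $L^p(\R^3)$.

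I expect the summation step to be the only genuine obstacle. One must keep the localized Gagliardo seminorms summable — which is exactly where the bounded overlap of the covering together with the constraint $|x-y|<2R$ enters — and choose the exponents in Hölder's inequality for series so that a strictly positive power of $\delta_n$ survives while the $H^s$-contribution stays bounded; the dichotomy $p_2\lessgtr2$ is what makes this transparent. The reduction to $q=2$, the existence of the covering, and the fractional Sobolev inequality on balls are routine.
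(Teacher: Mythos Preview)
Your argument is correct and is precisely the standard Lions vanishing argument in the fractional setting. The paper itself does not give a proof but only refers to \cite[Lemma 2.3]{DSS}; the argument there is essentially the one you outline. One small remark: the relevant reference within this paper for the fractional Sobolev inequality on balls is Lemma~\ref{localized} (the localized Sobolev inequality), not Lemma~\ref{localemb}, which concerns the local embedding $H^s_A\hookrightarrow H^s(K)$.
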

\begin{proof}
See \cite[Lemma 2.3]{DSS}.
\end{proof}

\begin{lemma}[Localized Sobolev inequality]
	\label{localized}
Let $\xi\in\R^3$ and $R>0$. Then, for $u\in H^s(B_R(\xi))$, 
$$
\|u\|_{L^\frac{6}{3-2s}(B_R(\xi))}\leq C(s)\left(\frac{1}{R^{2s}}\int_{B_R(\xi)} |u(x)|^2dx+
\int_{B_R(\xi)\times B_R(\xi)}\frac{|u(x)-u(y)|^2}{|x-y|^{3+2s}}dydx\right)^{1/2}
$$
for some constant $C(s)>0$. In particular for every $1\leq p\leq \frac{6}{3-2s}$ there holds
$$
\|u\|_{L^p(B_R(\xi))}\leq C(s,R) \|u\|_{H^s(B_R(\xi))}
$$
for some constant $C(s,R)>0$ and all $u\in H^s(B_R(\xi))$.
	\end{lemma}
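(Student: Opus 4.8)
The plan is to reduce the scale‑invariant estimate to the unit ball by translation and dilation, and there to deduce it from the global fractional Sobolev embedding via an extension operator; the $L^p$-version then follows by Hölder's inequality.

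First I would reduce to $\xi=0$, $R=1$. The change of variables $x\mapsto x+\xi$ leaves both sides of the asserted inequality unchanged, so one may assume $\xi=0$. For fixed $R>0$, set $v(x):=u(Rx)$ for $x\in B_1(0)$. Elementary changes of variables give
\[
\|v\|_{L^{6/(3-2s)}(B_1(0))}^2=R^{-(3-2s)}\|u\|_{L^{6/(3-2s)}(B_R(0))}^2,\qquad \|v\|_{L^2(B_1(0))}^2=R^{-3}\|u\|_{L^2(B_R(0))}^2,
\]
\[
\int_{B_1(0)\times B_1(0)}\frac{|v(x)-v(y)|^2}{|x-y|^{3+2s}}\,dx\,dy=R^{2s-3}\int_{B_R(0)\times B_R(0)}\frac{|u(x)-u(y)|^2}{|x-y|^{3+2s}}\,dx\,dy.
\]
Substituting these into the claimed inequality and dividing by $R^{3-2s}$, one sees that it is equivalent to the case $\xi=0$, $R=1$, with the \emph{same} constant $C(s)$; note in particular that the weight $R^{-2s}$ in front of the $L^2$-term is produced exactly by this bookkeeping.

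It then remains to prove that $\|w\|_{L^{6/(3-2s)}(B_1(0))}\le C(s)\,\|w\|_{H^s(B_1(0))}$ for every $w\in H^s(B_1(0))$. Here I would use that $B_1(0)$ is a bounded domain with smooth (hence Lipschitz) boundary, so it is an extension domain: there is a bounded linear operator $E\colon H^s(B_1(0))\to H^s(\R^3)$ with $Ew=w$ a.e.\ on $B_1(0)$ and $\|Ew\|_{H^s(\R^3)}\le C(s)\,\|w\|_{H^s(B_1(0))}$ (cf.\ \cite[Theorem 5.4]{DPV} and the discussion therein on extension domains). Applying the global fractional Sobolev inequality $\|z\|_{L^{6/(3-2s)}(\R^3)}\le C(s)\,\|z\|_{H^s(\R^3)}$ (see \cite[Theorem 6.5]{DPV}) to $z=Ew$ and restricting the left-hand side to $B_1(0)$ gives the model estimate, hence the first displayed inequality of the lemma.

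For the second inequality, fix $1\le p\le \frac{6}{3-2s}$. Since ${\mathcal L}^3(B_R(\xi))<\infty$, Hölder's inequality yields $\|u\|_{L^p(B_R(\xi))}\le C(s,R)\,\|u\|_{L^{6/(3-2s)}(B_R(\xi))}$; combining this with the first inequality and bounding the quantity under the square root by $\max\{R^{-2s},1\}\,\|u\|_{H^s(B_R(\xi))}^2$ gives the claim. The only genuinely non-routine ingredient is the existence of the $H^s$-extension operator on the ball; everything else is the scaling arithmetic of the first step, which must simply be carried out with care so as to land the exact form of the inequality.
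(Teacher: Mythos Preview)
Your argument is correct: the translation/dilation reduction is exactly the right bookkeeping (and indeed produces the $R^{-2s}$ weight), and the unit-ball case follows from the existence of a bounded $H^s$-extension on a smooth domain together with the global embedding $H^s(\R^3)\hookrightarrow L^{6/(3-2s)}(\R^3)$; the $L^p$ consequence via H\"older is routine.

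The paper itself does not spell out a proof but simply cites \cite[Proposition~2.5]{BP} for the first inequality and says the second ``immediately follows''. So your approach is more self-contained than the paper's, while arriving at the same conclusion. The only point worth flagging is that your route leans on the extension theorem for $H^s$ on Lipschitz domains, which is a nontrivial black box; the cited result in \cite{BP} is tailored to balls and yields the scale-invariant form directly, but for the purposes of this lemma either reference suffices.
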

	\begin{proof}
See \cite[Proposition 2.5]{BP} for the first inequality. The second inequality immediately follows.
	\end{proof}
	
\begin{lemma}[Cut-off estimates]
	\label{cut}
	Let $u\in H^s_A(\R^3,\C)$ and $\varphi\in C^{0,1}(\R^3)$ with $0\leq \varphi\leq 1$. Then,
	for every pair of measurable sets $E_1,E_2\subset \R^3$, we have
	\begin{align*}
		\int_{E_1\times E_2}\frac{|e^{-\i (x-y)\cdot A\left(\frac{x+y}{2}\right)}\varphi(x)u(x)-\varphi(y)u(y)|^2}{|x-y|^{3+2s}}dxdy
		&\leq C\min\left\{\int_{E_1}|u|^2dx,\int_{E_2}|u|^2dx\right\} \\
		&\quad+C\int_{E_1\times E_2}\frac{|e^{-\i (x-y)\cdot A\left(\frac{x+y}{2}\right)}u(x)-u(y)|^2}{|x-y|^{3+2s}}dxdy,
	\end{align*}
	where $C$ depends on $s$ and on the Lipschitz constant of $\varphi$.
\end{lemma}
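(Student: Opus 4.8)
The plan is to use the standard add--and--subtract trick, which moves the cut--off $\varphi$ onto the magnetic Gagliardo kernel of $u$ at the price of an $L^2$--error governed by the Lipschitz constant of $\varphi$. Write, for a.e.\ $x,y\in\R^3$,
\[
\Phi(x,y):=e^{-\i (x-y)\cdot A\left(\frac{x+y}{2}\right)},\qquad |\Phi(x,y)|=1,
\]
and record the two elementary identities
\[
\Phi(x,y)\varphi(x)u(x)-\varphi(y)u(y)=\varphi(y)\big(\Phi(x,y)u(x)-u(y)\big)+\big(\varphi(x)-\varphi(y)\big)\Phi(x,y)u(x),
\]
\[
\Phi(x,y)\varphi(x)u(x)-\varphi(y)u(y)=\varphi(x)\big(\Phi(x,y)u(x)-u(y)\big)+\big(\varphi(x)-\varphi(y)\big)u(y).
\]
From the first, using $|a+b|^2\le 2|a|^2+2|b|^2$, $0\le\varphi\le 1$, $|\Phi|=1$, and the fact that $\varphi$ being Lipschitz with constant $L$ and bounded by $1$ forces $|\varphi(x)-\varphi(y)|^2\le\min\{L^2|x-y|^2,1\}$, one obtains the pointwise bound
\[
|\Phi(x,y)\varphi(x)u(x)-\varphi(y)u(y)|^2\le 2\,|\Phi(x,y)u(x)-u(y)|^2+2\,|u(x)|^2\min\{L^2|x-y|^2,1\}.
\]

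Next I would divide by $|x-y|^{3+2s}$ and integrate over $E_1\times E_2$. The first term reproduces a constant multiple of the magnetic term appearing on the right--hand side. For the second term I use Fubini, and for fixed $x$ bound the inner integral uniformly by enlarging $E_2$ to $\R^3$ and setting $z=x-y$:
\[
\int_{E_2}\frac{\min\{L^2|x-y|^2,1\}}{|x-y|^{3+2s}}\,dy\le\int_{\R^3}\frac{\min\{L^2|z|^2,1\}}{|z|^{3+2s}}\,dz=:C_1(s,L)<\infty,
\]
the finiteness being clear since near the origin the integrand is $\le L^2|z|^{-1-2s}$ with $1+2s<3$, while for large $|z|$ it is $\le |z|^{-3-2s}$ with $3+2s>3$; hence $s\in(0,1)$ makes both pieces converge. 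This gives the stated inequality with the minimum replaced by $\int_{E_1}|u|^2\,dx$. Using instead the second identity, integrating first in $x$ and interchanging the roles of $E_1$ and $E_2$, yields the same bound with $\int_{E_2}|u|^2\,dx$; taking the smaller of the two produces the $\min$, with a final constant $C=\max\{2,2C_1(s,L)\}$ depending only on $s$ and $L$.

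There is no real obstacle here beyond bookkeeping: the one point that requires a little attention is to keep the ``correct'' factor of $\varphi$ in each estimate (namely $\varphi(y)$ in one and $\varphi(x)$ in the other) so that both members of the minimum are generated, together with the elementary check that the truncated kernel $\min\{L^2|z|^2,1\}/|z|^{3+2s}$ is integrable on $\R^3$. If the right--hand side is infinite the inequality is trivial, so the only hypothesis used is $u\in H^s_A(\R^3,\C)$ (which guarantees $\int_{E_i}|u|^2\,dx\le\|u\|_{L^2}^2<\infty$), and no regularity of $A$ beyond what is assumed throughout is needed.
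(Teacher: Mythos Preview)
Your proof is correct and follows essentially the same route as the paper's: both argue by an add--and--subtract decomposition that isolates $\varphi(x)-\varphi(y)$, use $|a+b|^2\le 2|a|^2+2|b|^2$ together with $0\le\varphi\le 1$ and $|\Phi|=1$, and then control the remainder by the integrability of $\min\{L^2|z|^2,1\}/|z|^{3+2s}$ over $\R^3$. The only cosmetic difference is that you write down the two symmetric identities (one with $\varphi(y)$ pulled out, one with $\varphi(x)$) to obtain the two members of the minimum directly, whereas the paper obtains the second one by first rewriting $|\Phi(x,y)\varphi(x)u(x)-\varphi(y)u(y)|=|\varphi(x)u(x)-\overline{\Phi(x,y)}\varphi(y)u(y)|$ and then decomposing; the content is identical.
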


\begin{proof}
	The proof follows by arguing as in \cite[Lemma 5.3]{DPV}, where the case $A=0$ and $E_1=E_2$
	is considered.\ For the sake of completeness, we show the details.
	We have
	\begin{align*}
		& \int_{E_1\times E_2}\frac{|e^{-\i (x-y)\cdot A\left(\frac{x+y}{2}\right)}\varphi(x)u(x)-\varphi(y)u(y)|^2}{|x-y|^{3+2s}}dxdy \\
		&\leq  C\int_{E_1\times E_2}\frac{|e^{-\i (x-y)\cdot A\left(\frac{x+y}{2}\right)}u(x)-u(y)|^2}{|x-y|^{3+2s}}dxdy+
		C\int_{E_1\times E_2}\frac{|u(y)|^2|\varphi(x)-\varphi(y)|^2}{|x-y|^{3+2s}}dxdy.
	\end{align*}
	On the other hand, the second integral splits as
	$$
	\int_{E_2}|u(y)|^2\int_{E_1\cap \{|x-y|\leq 1\}}\frac{1}{|x-y|^{1+2s}}dxdy+\int_{E_2}|u(y)|^2\int_{E_1\cap \{|x-y|\geq 1\}}\frac{1}{|x-y|^{3+2s}}dxdy
	\leq C\int_{E_2}|u|^2dy.
	$$
	Analogously, we have
	\begin{align*}
		& \int_{E_1\times E_2}\frac{|\varphi(x)u(x)-e^{\i (x-y)\cdot A\left(\frac{x+y}{2}\right)}\varphi(y)u(y)|^2}{|x-y|^{3+2s}}dxdy \\
		&\leq  C\int_{E_1\times E_2}\frac{|e^{-\i (x-y)\cdot A\left(\frac{x+y}{2}\right)}u(x)-u(y)|^2}{|x-y|^{3+2s}}dxdy+
		C\int_{E_1\times E_2}\frac{|u(x)|^2|\varphi(x)-\varphi(y)|^2}{|x-y|^{3+2s}}dxdy,
	\end{align*}
	and the second term can be estimated as before by $\int_{E_1}|u|^2dx$. The assertion follows.
\end{proof}
	Thus we can prove
	\begin{lemma}[Dicothomy]
		\label{lemma-dic}
		Let $\{u_n\}_{n\in \N}$ be a sequence in $H^s_A(\R^3,\C)$ such that, for some $L>0$,
		$$
		\|u_n\|_{L^p(\R^3)}=1,\qquad \lim_n \|u_n\|_{s,A}^2=L,
		$$
		and let us set
		$$
		\mu_n(x)=|u_n(x)|^2+\int_{\R^3}\frac{|e^{-\i (x-y)\cdot A\left(\frac{x+y}{2}\right)}u_n(x)-u_n(y)|^2}{|x-y|^{3+2s}}dy,\quad x\in\R^3,\,\, n\in\N.
		$$
		Assume that there exists $\beta\in (0,L)$ such 
		that for all $\eps>0$ there exist $\bar R>0$, $\bar n\geq 1$, 
		a sequence of radii $R_n\to+\infty$ and 
		$\{\xi_n\}_{n\in\N}\subset \R^3$ such that for $n\geq \bar n$ 
		\begin{align}
		& \left|\int_{\R^3}\mu_n^1(x)dx-\beta\right|\leq \eps,\qquad 
		\mu_n^1:= 1_{B_{\bar R}(\xi_n)}\mu_n, \nonumber\\
		& \left|\int_{\R^3}\mu_n^2(x)dx-(L-\beta)\right|\leq \eps,\qquad
		\mu_n^2:= 1_{B^c_{R_n}(\xi_n)}\mu_n,  \nonumber\\
		& \int_{\R^3}|\mu_n(x)-\mu_n^1(x)-\mu_n^2(x)|dx\leq \eps.   \label{tre}
		\end{align}
		%
		%
		Then there exist $\{u_n^1\}_{n\in\N},\{u_n^2\}_{n\in\N}\subset 
		H^s_A(\R^3,\C)$ such that ${\rm dist}({\rm supp}(u_n^1),{\rm supp}(u_n^2))\to+\infty$ and 
		\begin{align}
		& \left|\|u_n^1\|_{s,A}^2-\beta\right|\leq \eps, \label{1-est}\\
		& \left|\|u_n^2\|_{s,A}^2-(L-\beta)\right|\leq \eps, \label{2-est}\\
		& \|u_n-u_n^1-u_n^2\|_{s,A}\leq \eps,  \label{3-est} \\
		& \Big|1-\|u_n^1\|_{L^p(\R^3)}^p-\|u_n^2\|_{L^p(\R^3)}^p\Big|\leq \eps \label{normep} 
		\end{align}
		for any $n\ge \bar n$.
	\end{lemma}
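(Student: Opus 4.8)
The plan is to construct $u_n^1$ and $u_n^2$ by multiplying $u_n$ by suitable Lipschitz cut-off functions adapted to the concentration region $B_{\bar R}(\xi_n)$ and the exterior region $B_{R_n}^c(\xi_n)$, and then to estimate the resulting magnetic energies by splitting $\R^6$ into the natural bilinear blocks and applying Lemma~\ref{cut} on each block. First I would fix $\eps>0$, take the radii $\bar R$, $R_n\to\infty$ and centers $\xi_n$ provided by the hypothesis, and choose cut-offs $\varphi_n,\psi_n\in C^{0,1}(\R^3)$ with $0\le\varphi_n,\psi_n\le 1$, Lipschitz constant bounded by $C/(R_n-\bar R)\le C$, such that $\varphi_n\equiv 1$ on $B_{\bar R}(\xi_n)$ and $\supp\varphi_n\subset B_{(\bar R+R_n)/2}(\xi_n)$, while $\psi_n\equiv 1$ on $B_{R_n}^c(\xi_n)$ and $\supp\psi_n\subset B_{(\bar R+R_n)/2}^c(\xi_n)$; then set $u_n^1:=\varphi_n u_n$ and $u_n^2:=\psi_n u_n$. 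By construction $\supp u_n^1\subset B_{(\bar R+R_n)/2}(\xi_n)$ and $\supp u_n^2\subset B_{(\bar R+R_n)/2}^c(\xi_n)$, so ${\rm dist}(\supp u_n^1,\supp u_n^2)\ge (R_n-\bar R)/2\to+\infty$.

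Next I would establish the energy estimates \eqref{1-est}–\eqref{3-est}. Since $u_n^1=u_n$ on $B_{\bar R}(\xi_n)$ (where also $\mu_n^1=\mu_n$) and $u_n^1=0$ outside $B_{(\bar R+R_n)/2}(\xi_n)$, I decompose the magnetic Gagliardo integral of $u_n^1$ over $\R^6$ into the four blocks determined by whether each variable lies in $B_{\bar R}(\xi_n)$, in the annulus $B_{(\bar R+R_n)/2}(\xi_n)\setminus B_{\bar R}(\xi_n)$, or outside. On the block $B_{\bar R}(\xi_n)\times B_{\bar R}(\xi_n)$ one gets exactly the corresponding piece of $[u_n]_{s,A}^2$; on the mixed and annular blocks one uses Lemma~\ref{cut} (with $E_1$, $E_2$ the relevant sets) to bound the contribution by $C\big(\int_{{\rm annulus}}|u_n|^2dx + \int_{{\rm annulus}\times\R^3}\frac{|u_{n,x}(x)-u_{n,x}(y)|^2}{|x-y|^{3+2s}}dxdy\big)$, and this in turn is controlled by $C\int_{(B_{\bar R}(\xi_n))^c}\mu_n\,dx + (\text{error from }\eqref{tre})$. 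The hypothesis \eqref{tre} says $\int_{\R^3}|\mu_n-\mu_n^1-\mu_n^2|dx\le\eps$, together with $\int\mu_n^1\approx\beta$ and $\int\mu_n^2\approx L-\beta$ and $\int\mu_n\to L$, which forces the mass of $\mu_n$ carried by the transition annulus to be $O(\eps)$; substituting this bound yields $\big|\|u_n^1\|_{s,A}^2-\beta\big|\le C\eps$ for $n$ large. The same argument applied to $\psi_n$ gives \eqref{2-est}, and the estimate \eqref{3-est} for $u_n-u_n^1-u_n^2 = (1-\varphi_n-\psi_n)u_n$ follows identically, since $1-\varphi_n-\psi_n$ is supported in the transition annulus and is Lipschitz with bounded constant. (Since the statement is up to a harmless relabelling of $\eps$ into $C\eps$ with $C=C(s)$ fixed, this is acceptable; alternatively one shrinks $\eps$ at the outset.)

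For \eqref{normep} I would argue purely with the $L^p$ norms: $\|u_n^1\|_{L^p}^p+\|u_n^2\|_{L^p}^p = \int(\varphi_n^p+\psi_n^p)|u_n|^p\,dx$, and since $0\le\varphi_n^p+\psi_n^p\le 1$ everywhere, equals $1$ on $B_{\bar R}(\xi_n)\cup B_{R_n}^c(\xi_n)$, and differs from $|u_n|^p$ only on the transition annulus, the difference from $1=\int|u_n|^p\,dx$ is bounded by $\int_{{\rm annulus}}|u_n|^p\,dx$. To see this last integral is small one uses that $\int_{{\rm annulus}}|u_n|^2\,dx\le C\eps$ (from the $\mu_n$ mass bound above) together with the localized Sobolev inequality Lemma~\ref{localized} and the uniform bound on $\|u_n\|_{s,A}$ to control $\|u_n\|_{L^p({\rm annulus})}$ by an interpolation between $L^2$ and $L^{6/(3-2s)}$; since $2<p<6/(3-2s)$ the $L^2$-smallness wins and one gets $\int_{{\rm annulus}}|u_n|^p\,dx\le C\eps^{\theta}$ for some $\theta>0$, hence $\le\eps$ after relabelling.

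The main obstacle I expect is the bookkeeping in the second paragraph: carefully splitting the double integral over $\R^6$ into the finitely many bilinear regions, keeping track of which region contributes to which of $\beta$, $L-\beta$, or the error, and making sure that when Lemma~\ref{cut} is invoked the "$\min$" on its right-hand side is taken over the correct (small-mass) set so that the transition-annulus mass bound from \eqref{tre} can be used. Once one is disciplined about the fact that $\mu_n$ restricted to the annulus has total mass $O(\eps)$ — which is the real content of hypothesis \eqref{tre} combined with the two near-equalities — each individual estimate is a routine application of the cut-off lemma and the localized Sobolev inequality.
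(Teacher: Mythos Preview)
Your plan is essentially the paper's: it too sets $u_n^1=\varphi_{\bar R}(\cdot-\xi_n)u_n$ and $u_n^2=(1-\varphi_{R_n/2}(\cdot-\xi_n))u_n$ via smooth cut-offs, splits the magnetic seminorm over the same bilinear blocks, applies Lemma~\ref{cut} on the blocks touching the transition annulus (exploiting \eqref{piccola}--\eqref{piccola3}), and obtains \eqref{normep} through the $L^2$--$L^{6/(3-2s)}$ interpolation you describe. Two points need correction, however.

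First, with your choice of radii the supports of $u_n^1$ and $u_n^2$ meet on the sphere $\partial B_{(\bar R+R_n)/2}(\xi_n)$, so the claimed bound ${\rm dist}(\supp u_n^1,\supp u_n^2)\geq(R_n-\bar R)/2$ is false; you must leave a genuine gap between the two supports (the paper uses the radii $2\bar R$ and $R_n/2$). Second, and this is the only substantive omission, the ``inner\,$\times$\,outer'' block $B_{\bar R}(\xi_n)\times(\text{complement of }\supp\varphi_n)$ cannot be handled by Lemma~\ref{cut} together with the annulus-mass bound, because \emph{neither} factor lies in the small-mass annulus and the $\min$ on the right-hand side of Lemma~\ref{cut} gives nothing useful. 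On that block $u_n^1(y)=0$ and the integrand collapses to $|u_n(x)|^2/|x-y|^{3+2s}$; smallness follows instead from the elementary observation that $|x-y|\geq c(R_n-\bar R)\to\infty$ there, which the paper isolates explicitly as estimate~\eqref{deltapiccolo}. The same far-away argument is needed a second time when you decompose $\int\mu_n^1=\int_{B_{\bar R}(\xi_n)}\mu_n$ (its $y$-integral runs over all of $\R^3$, including $B_{R_n}^c(\xi_n)$). Once this one additional elementary step is inserted, your scheme closes exactly as in the paper.
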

	\begin{proof}
		Notice that we have
		\begin{equation}\label{mu1}
		\begin{split}
		\int_{\R^3}\mu_n^1dx &=\int_{B_{\bar R}(\xi_n)}|u_n|^2 dx+\int_{B_{\bar R}(\xi_n)\times B_{{\bar R}}(\xi_n)}
		\frac{|e^{-\i (x-y)\cdot A\left(\frac{x+y}{2}\right)}u_n(x)-u_n(y)|^2}{|x-y|^{3+2s}}dxdy \\
		&\quad+\int_{B_{\bar R}(\xi_n)\times B_{\bar R}^c(\xi_n)}
		\frac{|e^{-\i (x-y)\cdot A\left(\frac{x+y}{2}\right)}u_n(x)-u_n(y)|^2}{|x-y|^{3+2s}}dxdy,
		\end{split}
		\end{equation}
		as well as
		\begin{align*}
		\int_{\R^3}\mu_n^2dx 
		&=\int_{B^c_{R_n}(\xi_n)}|u_n|^2 dx+\int_{B^c_{R_n}(\xi_n)\times B^c_{R_n}(\xi_n)}
		\frac{|e^{-\i (x-y)\cdot A\left(\frac{x+y}{2}\right)}u_n(x)-u_n(y)|^2}{|x-y|^{3+2s}}dxdy \\
		&\quad +\int_{B^c_{R_n}(\xi_n)\times B_{R_n}(\xi_n)}
		\frac{|e^{-\i (x-y)\cdot A\left(\frac{x+y}{2}\right)}u_n(x)-u_n(y)|^2}{|x-y|^{3+2s}}dxdy,
		\end{align*}
		and, from inequality \eqref{tre}, we have,
		for $n\geq \bar n$,
		\begin{align}
		\label{piccola}
		& \int_{\{\bar R\leq |x-\xi_n|\leq R_n\}\times \R^3}\frac{|e^{-\i (x-y)\cdot A\left(\frac{x+y}{2}\right)}u_n(x)-u_n(y)|^2}{|x-y|^{3+2s}}dxdy\leq \eps, \\
		& \int_{\R^3\times \{\bar R\leq |y-\xi_n|\leq R_n\}}\frac{|e^{-\i (x-y)\cdot A\left(\frac{x+y}{2}\right)}u_n(x)-u_n(y)|^2}{|x-y|^{3+2s}}dxdy\leq \eps, 
		\label{piccola2} \\
		\label{piccola3}
		&\int_{\{\bar R\leq |x-\xi_n|\leq R_n\}}|u_n|^2 dx\leq \eps.
		\end{align}
		For every $r>0$, let $\varphi_r\in C^\infty(\R^3)$ be a radially symmetric function such that 
		$\varphi_r=1$ on $B_r(0)$ and $\varphi_r=0$ su $B^c_{2r}(0)$.\
		In light of Lemma~\ref{cut} applied with $E_1=E_2=\R^3$, 
		for any $n\in\N$, we can consider the functions
		$$
		u_n^1:=\varphi_{\bar{R}}(\cdot-\xi_n)u_n\in H^s_A(\R^3,\C),\qquad u_n^2:=(1-\varphi_{R_n/2}(\cdot-\xi_n))u_n\in H^s_A(\R^3,\C).
		$$
		We observe for further usage that the functions $\varphi_{\bar{R}}(\cdot-\xi_n)$ and $1-\varphi_{R_n/2}(\cdot-\xi_n)$
		have a Lipschitz constant which is uniformly bounded with respect to $n$.	
		Moreover, ${\rm dist}({\rm supp}(u_n^1),{\rm supp}(u_n^2))\to\infty$.
		Let us consider $\{u_n^1\}_{n\in\N}$. We have $[u_n^1]_{s,A}^2=\sum_{i=1}^5 I^i_n$, where
		\begin{align*}
		I^1_n& :=\int_{B_{\bar R}(\xi_n)\times B_{\bar R}(\xi_n)}\frac{|e^{-\i (x-y)\cdot A\left(\frac{x+y}{2}\right)}u_n(x)-u_n(y)|^2}{|x-y|^{3+2s}}dxdy, \\
		I^2_n&:=\int_{B_{2\bar R}(\xi_n)\setminus B_{\bar R}(\xi_n)\times B_{2\bar R}(\xi_n)\setminus B_{\bar R}(\xi_n)}\frac{|e^{-\i (x-y)\cdot A\left(\frac{x+y}{2}\right)}u_n^1(x)-u_n^1(y)|^2}{|x-y|^{3+2s}}dxdy, \\
		I^3_n&:=2\int_{B_{2\bar R}(\xi_n)\setminus B_{\bar R}(\xi_n)\times B_{\bar R}(\xi_n)}\frac{|e^{-\i (x-y)\cdot A\left(\frac{x+y}{2}\right)}u_n^1(x)-u_n^1(y)|^2}{|x-y|^{3+2s}}dxdy, \\
		I^4_n&:= 2\int_{B_{2\bar R}(\xi_n)\setminus B_{\bar R}(\xi_n)\times B^c_{2\bar R}(\xi_n)}\frac{|e^{-\i (x-y)\cdot A\left(\frac{x+y}{2}\right)}u_n^1(x)-u_n^1(y)|^2}{|x-y|^{3+2s}}dxdy, \\
		I^5_n& :=2\int_{B_{\bar R}(\xi_n)\times B^c_{2\bar R}(\xi_n)}\frac{|e^{-\i (x-y)\cdot A\left(\frac{x+y}{2}\right)}u_n^1(x)-u_n^1(y)|^2}{|x-y|^{3+2s}}dxdy.
		\end{align*}
		Concerning $I_n^i$ with $i=2,3,4$, since for suitable measurable sets $E_2^i\subset\R^3$ and $c_i>0$,
		\[
		I^i_n= c_i \int_{B_{2\bar R}(\xi_n)\setminus B_{\bar R}(\xi_n)\times E^i_2}\frac{|e^{-\i (x-y)\cdot A\left(\frac{x+y}{2}\right)}u_n^1(x)-u_n^1(y)|^2}{|x-y|^{3+2s}}dxdy,
		\]
		in light of Lemma~\ref{cut} and inequalities \eqref{piccola}-\eqref{piccola3}, we have
		\begin{equation}\label{234}
		\begin{split}
		I_n^i
		& \leq C\left[\int_{B_{2\bar R}(\xi_n)\setminus B_{\bar R}(\xi_n)}|u_n|^2dx+
		\int_{B_{2\bar R}(\xi_n)\setminus B_{\bar R}(\xi_n)\times E^i_2}\frac{|e^{-\i (x-y)\cdot A\left(\frac{x+y}{2}\right)}u_n(x)-u_n(y)|^2}{|x-y|^{3+2s}}dxdy\right]\\
		& \leq C\eps,
		\end{split}		  	
		\end{equation}
		being $B_{2\bar R}(\xi_n)\setminus B_{\bar R}(\xi_n)\subset \{\bar R\leq |x-\xi_n|\leq R_n\}$ for every $n$ large enough.\\
		Concerning $I_n^5$, we have
		\begin{align*}
		I^5_n & =2\int_{B_{\bar R}(\xi_n)\times \{2\bar R\leq |y-\xi_n|\leq R_n\}}\frac{|e^{-\i (x-y)\cdot A\left(\frac{x+y}{2}\right)}u_n^1(x)-u_n^1(y)|^2}{|x-y|^{3+2s}}dxdy \\
		&\quad +2\int_{B_{\bar R}(\xi_n)\times B^c_{R_n}(\xi_n)}\frac{|e^{-\i (x-y)\cdot A\left(\frac{x+y}{2}\right)}u_n^1(x)-u_n^1(y)|^2}{|x-y|^{3+2s}}dxdy.
		\end{align*}
		Then, arguing as in \eqref{234} for $I_n^i$ ($i=2,3,4$)
		we get
		$$
		\int_{B_{\bar R}(\xi_n)\times \{2\bar R\leq |y-\xi_n|\leq R_n\}}
		\frac{|e^{-\i (x-y)\cdot A\left(\frac{x+y}{2}\right)}u_n^1(x)-u_n^1(y)|^2}{|x-y|^{3+2s}}dxdy \leq C\eps,
		$$
		for large $n$. On the other hand, as far as the second term in concerned,  we get 
		\begin{align*}
		& \int_{B_{\bar R}(\xi_n)\times B^c_{R_n}(\xi_n)}\frac{|e^{-\i (x-y)\cdot A\left(\frac{x+y}{2}\right)}u_n^1(x)-u_n^1(y)|^2}{|x-y|^{3+2s}}dxdy 
		=\int_{B_{\bar R}(\xi_n)\times B^c_{R_n}(\xi_n)}\frac{|u_n(x)|^2}{|x-y|^{3+2s}}dxdy,
		\end{align*}	
		since $u^1_n(y)=0$ for all $y\in B^c_{R_n}(\xi_n)$ and $u^1_n(x)=u_n(x)$ for all $x\in B_{\bar R}(\xi_n)$.
		Observe first that if $(x,y)\in B_{\bar R}(\xi_n)\times B^c_{R_n}(\xi_n)$, 
		then $|x-y|\geq R_n-\bar R\to\infty$, as $n\to\infty$. We thus have
		\begin{equation}\label{deltapiccolo}
		\begin{split}
		& \int_{B_{\bar R}(\xi_n)\times B^c_{R_n}(\xi_n)}\frac{|u_n(x)|^2}{|x-y|^{3+2s}}dxdy  \\
		& \leq \frac{1}{(R_n-\bar R)^\delta}\int_{B_{\bar R}(\xi_n)}|u_n(x)|^2\left(\int_{\{|x-y|\geq 1\}}\frac{1}{|x-y|^{3+2s-\delta}}dy\right)dx 
		\leq \frac{C}{(R_n-\bar R)^\delta}\leq C\eps,
		\end{split}
		\end{equation}	
		where $0<\delta<2s$. Here we have used the boundedness of $\{u_n\}_{n\in\N}$ in $L^2(\R^3,\C)$.
		So we have that $[u_n^1]_{s,A}^2= I^1_n +\varsigma_{n,\eps}$ with $\varsigma_{n,\eps}\leq C\eps$
		for $n$ large, which implies on account of \eqref{piccola3} 
		\begin{equation}
		\label{ecco1}
		\|u_n^1\|_{s,A}^2=\int_{B_{\bar R}(\xi_n)}|u_n|^2 dx+ I^1_n +\varsigma_{n,\eps},\quad  \varsigma_{n,\eps}\leq C\eps.
		\end{equation}
		A similar argument involving $\{u_n\}_{n\in\N}$ in place of  $\{u_n^1\}_{n\in\N}$ shows that formula \eqref{mu1} writes as
		\begin{equation}
		\label{ecco2}
		\int_{\R^3}\mu_n^1dx =
		\int_{B_{\bar R}(\xi_n)}|u_n|^2 dx+I^1_n+\hat\varsigma_{n,\eps},\quad  \hat\varsigma_{n,\eps}\leq C\eps.
		\end{equation}
		Indeed, since
		\begin{align*}
		&\int_{B_{\bar R}(\xi_n)\times B_{\bar R}^c(\xi_n)}
		\frac{|e^{-\i (x-y)\cdot A\left(\frac{x+y}{2}\right)}u_n(x)-u_n(y)|^2}{|x-y|^{3+2s}}dxdy\\
		&\quad \leq
		\int_{B_{\bar R}(\xi_n)\times \{\bar R\leq |y-\xi_n|\leq R_n\}}
		\frac{|e^{-\i (x-y)\cdot A\left(\frac{x+y}{2}\right)}u_n(x)-u_n(y)|^2}{|x-y|^{3+2s}}dxdy\\
		&\qquad + C\left[
		\int_{B_{\bar R}(\xi_n)\times B_{R_n}^c(\xi_n)}
		\frac{|u_n(x)|^2}{|x-y|^{3+2s}}dxdy
		+\int_{B_{\bar R}(\xi_n)\times B_{R_n}^c(\xi_n)}
		\frac{|u_n(y)|^2}{|x-y|^{3+2s}}dxdy
		\right],
		\end{align*}
		by \eqref{piccola2} and arguing as in \eqref{deltapiccolo} we can conclude.
		By combining \eqref{ecco1} and \eqref{ecco2} we finally obtain the desired estimate \eqref{1-est}.\\
		Now, concerning $\{u_n^2\}_{n\in\N}$, we have
		$[u_n^2]_{s,A}^2=\sum_{i=1}^5 J^i_n,$
		where we have set
		\begin{align*}
		J^1_n& :=\int_{B^c_{R_n}(\xi_n)\times B^c_{R_n}(\xi_n)}\frac{|e^{-\i (x-y)\cdot A\left(\frac{x+y}{2}\right)}u_n(x)-u_n(y)|^2}{|x-y|^{3+2s}}dxdy, \\
		J^2_n& :=\int_{B_{R_n}(\xi_n)\setminus B_{R_n/2}(\xi_n)\times B_{R_n}(\xi_n)
			\setminus B_{R_n/2}(\xi_n)}\frac{|e^{-\i (x-y)\cdot A\left(\frac{x+y}{2}\right)}u_n^2(x)-u_n^2(y)|^2}{|x-y|^{3+2s}}dxdy, \\
		J^3_n&:=2\int_{B_{R_n}(\xi_n)\setminus B_{R_n/2}(\xi_n)\times B_{R_n/2}(\xi_n)}\frac{|e^{-\i (x-y)\cdot A\left(\frac{x+y}{2}\right)}u_n^2(x)-u_n^2(y)|^2}{|x-y|^{3+2s}}dxdy, \\
		J^4_n&:= 2\int_{B_{R_n}(\xi_n)\setminus B_{R_n/2}(\xi_n)\times B^c_{R_n}(\xi_n)}\frac{|e^{-\i (x-y)\cdot A\left(\frac{x+y}{2}\right)}u_n^2(x)-u_n^2(y)|^2}{|x-y|^{3+2s}}dxdy, \\
		J^5_n& :=2\int_{B_{R_n/2}(\xi_n)\times B^c_{R_n}(\xi_n)}\frac{|e^{-\i (x-y)\cdot A\left(\frac{x+y}{2}\right)}u_n^2(x)-u_n^2(y)|^2}{|x-y|^{3+2s}}dxdy.
		\end{align*}
		Concerning $J_n^i$ with $i=2,3,4$, observe that the integration domains are $B_{R_n}(\xi_n)\setminus B_{R_n/2}(\xi_n)\times E_2^i$, for suitable measurable $E_2^i$'s, and they are subset of $\{\bar R\leq |x-\xi_n|\leq R_n\}\times \R^3$ for $n$ sufficiently large. Thus we can argue as in \eqref{234}.
		Finally,
		$J^5_n$ can be estimated with similar arguments to that used in \eqref{deltapiccolo} and, as for $\mu_n^1$, using also \eqref{piccola3}, we obtain
		\begin{equation*}
		\int_{\R^3}\mu_n^2(x)dx 
		=\int_{B^c_{R_n}(\xi_n)}|u_n|^2 dx+J^1_n+\bar\varsigma_{n,\eps},\quad  \bar\varsigma_{n,\eps}\leq C\eps.
		\end{equation*}
		By combining all these estimates we get \eqref{2-est}
		for any $n$ large.\\
		Conclusion \eqref{3-est} follows
		by \eqref{piccola}-\eqref{piccola3}. In fact, setting 
		$$
		v_n:=u_n-u_n^1-u_n^2=(\varphi_{R_n/2}(\cdot-\xi_n)-\varphi_{\bar{R}}(\cdot-\xi_n))u_n,
		$$ 
		for all $n$, inequality \eqref{piccola3} yields
		$$
		\int_{\R^3}|v_n|^2dx=\int_{\R^3}(\varphi_{R_n/2}(x-\xi_n)-\varphi_{\bar{R}}(x-\xi_n) )^2|u_n|^2 dx
		\leq\int_{\{\bar R\leq |x-\xi_n|\leq R_n\}}|u_n|^2 dx\leq \eps.
		$$
		Furthermore, $[v_n]_{s,A}^2=\sum_{i=1}^4 K_n^i$, where
		\begin{align*}
		K^1_n&:=\int_{B_{R_n}(\xi_n)\setminus B_{\bar R}(\xi_n)\times B_{R_n}(\xi_n)\setminus B_{\bar R}(\xi_n)}\frac{|e^{-\i (x-y)\cdot A\left(\frac{x+y}{2}\right)}v_n(x)-v_n(y)|^2}{|x-y|^{3+2s}}dxdy, \\
		K^2_n&:=2\int_{B_{R_n}(\xi_n)\setminus B_{\bar R}(\xi_n)\times B_{\bar R}(\xi_n)}\frac{|e^{-\i (x-y)\cdot A\left(\frac{x+y}{2}\right)}v_n(x)-v_n(y)|^2}{|x-y|^{3+2s}}dxdy, \\
		K^3_n&:= 2\int_{B_{R_n}(\xi_n)\setminus B_{\bar R}(\xi_n)\times B^c_{R_n}(\xi_n)}\frac{|e^{-\i (x-y)\cdot A\left(\frac{x+y}{2}\right)}v_n(x)-v_n(y)|^2}{|x-y|^{3+2s}}dxdy.
		\end{align*}
		Since $v_n=\tilde\varphi u_n$ with 
		$\tilde\varphi:=(\varphi_{R_n/2}(\cdot-\xi_n)-\varphi_{\bar{R}}(\cdot-\xi_n))$,  we can repeat
		the arguments performed in \eqref{234}.
		Concerning the final assertion \eqref{normep}, we have for some $\vartheta>0$,
		\begin{align*}
		1-\|u_n^1\|_{L^p}^p-\|u_n^2\|_{L^p}^p
		&   =
		\int_{\R^3} \left(1-\varphi_{\bar{R}}^p(x-\xi_n)-(1-\varphi_{R_n/2}(x-\xi_n))^p\right)|u_n|^pdx \\
		& \leq \int_{\{\bar R\leq |x-\xi_n|\leq R_n\}}|u_n|^p dx\\
		& \leq \Big(\int_{\{\bar R\leq |x-\xi_n|\leq R_n\}}\!\!\!|u_n|^2 dx\Big)^{\frac{\vartheta p}{2}}
		\Big(\int_{\R^3} |u_n|^{\frac{6}{3-2s}} dx\Big)^{\frac{(1-\vartheta)p(3-2s)}{6}}\\
		&\leq\eps,
		\end{align*}
		in light of \eqref{piccola3} and Lemma~\ref{embedd}.\ This concludes the proof.
	\end{proof}

	\begin{lemma}[Partial Gauge invariance]
		\label{gauge2}
		Let $\xi\in\mathbb{R}^3$ and $u\in H^s_A(\R^3,\C)$. For $\eta\in\R^3$, let us set
		$$
		v(x)=e^{\i \eta\cdot x} u(x+\xi),\quad x\in\R^3.
		$$
		Then $v\in H^s_{A_\eta}(\R^3,\C)$ and 
		$$
		\|u\|_{s,A}= \|v\|_{s,A_\eta},\quad\text{where $A_\eta:=A(\cdot+\xi)+\eta$}.
		$$
	\end{lemma}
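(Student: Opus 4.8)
The plan is to reduce the statement to a phase-cancellation identity between the integrands defining $[v]_{s,A_\eta}$ and $[u]_{s,A}$, combined with a translation of variables, and then to obtain the membership claim by transporting an approximating sequence of smooth functions. First I would note that $A_\eta=A(\cdot+\xi)+\eta$ is again continuous with locally bounded gradient, so $H^s_{A_\eta}(\R^3,\C)$ and the norm $\|\cdot\|_{s,A_\eta}$ are defined exactly as in Section~\ref{setting}.

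The core of the argument is the following computation. Since $A_\eta\big(\tfrac{x+y}{2}\big)=A\big(\tfrac{x+y}{2}+\xi\big)+\eta$, for every $x,y\in\R^3$ one has
\[
e^{-\i(x-y)\cdot A_\eta(\frac{x+y}{2})}v(x)
=e^{-\i(x-y)\cdot A(\frac{x+y}{2}+\xi)}\,e^{-\i(x-y)\cdot\eta}\,e^{\i\eta\cdot x}\,u(x+\xi)
=e^{\i\eta\cdot y}\,e^{-\i(x-y)\cdot A(\frac{x+y}{2}+\xi)}u(x+\xi),
\]
where the three $\eta$-exponentials collapse because $e^{-\i(x-y)\cdot\eta}e^{\i\eta\cdot x}=e^{\i\eta\cdot y}$. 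Since moreover $v(y)=e^{\i\eta\cdot y}u(y+\xi)$, subtracting and pulling out the unimodular factor $e^{\i\eta\cdot y}$ gives
\[
\big|e^{-\i(x-y)\cdot A_\eta(\frac{x+y}{2})}v(x)-v(y)\big|
=\big|e^{-\i(x-y)\cdot A(\frac{x+y}{2}+\xi)}u(x+\xi)-u(y+\xi)\big|.
\]

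Then I would insert this equality into the definition of $[v]_{s,A_\eta}^2$ and perform the translation $x\mapsto x-\xi$, $y\mapsto y-\xi$, which has unit Jacobian, preserves $|x-y|$, and sends $\tfrac{x+y}{2}+\xi$ to $\tfrac{x+y}{2}$; this yields $[v]_{s,A_\eta}^2=[u]_{s,A}^2$ (as an identity in $[0,+\infty]$, finite since $u\in H^s_A$). The same translation, together with $|e^{\i\eta\cdot x}|=1$, gives $\|v\|_{L^2}^2=\int_{\R^3}|u(x+\xi)|^2\,dx=\|u\|_{L^2}^2$, and adding the two identities gives $\|v\|_{s,A_\eta}^2=\|u\|_{s,A}^2<\infty$.

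Finally, to prove $v\in H^s_{A_\eta}(\R^3,\C)$, I would pick $u_n\in C^\infty_c(\R^3,\C)$ with $u_n\to u$ in $\|\cdot\|_{s,A}$ (such a sequence exists by definition of $H^s_A$) and set $v_n(x):=e^{\i\eta\cdot x}u_n(x+\xi)$, which is again smooth with compact support. The map $w\mapsto e^{\i\eta\cdot x}w(x+\xi)$ is linear, hence it sends $u-u_n$ to $v-v_n$, and applying the isometry just established to $u-u_n$ gives $\|v-v_n\|_{s,A_\eta}=\|u-u_n\|_{s,A}\to 0$; thus $v$ is a $\|\cdot\|_{s,A_\eta}$-limit of elements of $C^\infty_c(\R^3,\C)$, i.e.\ $v\in H^s_{A_\eta}(\R^3,\C)$. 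I do not expect any genuine obstacle: the only delicate point is the phase bookkeeping $e^{-\i(x-y)\cdot\eta}e^{\i\eta\cdot x}=e^{\i\eta\cdot y}$ and making sure the translation is applied consistently wherever $x$, $y$, or $\tfrac{x+y}{2}$ appears.
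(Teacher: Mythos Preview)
Your argument is correct and follows exactly the same route as the paper: the phase cancellation $e^{-\i(x-y)\cdot\eta}e^{\i\eta\cdot x}=e^{\i\eta\cdot y}$ followed by the translation $x\mapsto x-\xi$, $y\mapsto y-\xi$ is precisely the change of variables the paper writes in one line. You additionally spell out the approximation argument for the membership $v\in H^s_{A_\eta}(\R^3,\C)$, which the paper leaves implicit.
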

	\begin{proof}
		Of course $\|v\|_{L^2}=\|u\|_{L^2}$.\ Moreover, a change of variables yields
		\begin{align*}
		\int_{\R^6}
		\frac{|e^{-\i (x-y)\cdot A_\eta\left(\frac{x+y}{2}\right)}v(x)-v(y)|^2}{|x-y|^{3+2s}}dxdy
		&=
		\int_{\R^6}
		\frac{|e^{-\i (x-y)\cdot A\left(\frac{x+y}{2}+\xi\right)} u(x+\xi)- u(y+\xi)|^2}{|x-y|^{3+2s}}dxdy\\
		&=
		\int_{\R^6}
		\frac{|e^{-\i (x-y)\cdot A\left(\frac{x+y}{2}\right)} u(x)- u(y)|^2}{|x-y|^{3+2s}}dxdy,
		\end{align*}
		which yields the assertion.
	\end{proof}

\noindent
If $A$ is linear, then, taking $\eta=-A(\xi)$ in Lemma~\ref{gauge2},  we get $A_\eta=A$ and hence

\begin{lemma}[Partial Gauge invariance]
		\label{gauge}
		Let $\xi\in\mathbb{R}^3$ and $u\in H^s_A(\R^3,\C)$. Assume that $A$ is linear and let us set 
		$$
		v(x)=e^{-\i A(\xi)\cdot x} u(x+\xi),\quad x\in\R^3.
		$$
		Then $v\in H^s_A(\R^3,\C)$ and $\|u\|_{s,A}= \|v\|_{s,A}$.
	\end{lemma}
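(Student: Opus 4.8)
The plan is to read this off immediately from Lemma~\ref{gauge2}. First I would use the linearity of $A$: for every $z\in\R^3$ one has $A(z+\xi)=A(z)+A(\xi)$, so the potential $A_\eta:=A(\cdot+\xi)+\eta$ appearing in Lemma~\ref{gauge2} is simply $A(\cdot)+A(\xi)+\eta$. Hence the particular choice $\eta:=-A(\xi)$ gives $A_\eta\equiv A$ on all of $\R^3$.

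With this value of $\eta$, the function $v(x)=e^{\i\eta\cdot x}u(x+\xi)$ provided by Lemma~\ref{gauge2} is exactly $v(x)=e^{-\i A(\xi)\cdot x}u(x+\xi)$, that is, the function in the present statement. Lemma~\ref{gauge2} then yields at once that $v\in H^s_{A_\eta}(\R^3,\C)=H^s_A(\R^3,\C)$ and that
$$
\|u\|_{s,A}=\|v\|_{s,A_\eta}=\|v\|_{s,A},
$$
which is precisely the assertion.

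I do not expect any genuine obstacle here: the only thing to verify is the elementary identity $A(\cdot+\xi)-A(\xi)=A$ coming from linearity, and everything else is a direct specialization of the previous lemma. If one preferred a self-contained argument, one could instead repeat the change-of-variables computation of Lemma~\ref{gauge2} directly, using that $(x-y)\cdot A\big(\tfrac{x+y}{2}+\xi\big)=(x-y)\cdot A\big(\tfrac{x+y}{2}\big)+(x-y)\cdot A(\xi)$ and that the extra phase $e^{-\i(x-y)\cdot A(\xi)}$ is cancelled exactly by the factor $e^{-\i A(\xi)\cdot x}e^{\i A(\xi)\cdot y}$ produced by the definition of $v$; but invoking Lemma~\ref{gauge2} is cleaner and shorter.
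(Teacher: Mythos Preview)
Your proposal is correct and matches the paper's approach exactly: the paper derives Lemma~\ref{gauge} simply by noting that when $A$ is linear, choosing $\eta=-A(\xi)$ in Lemma~\ref{gauge2} gives $A_\eta=A$. Your write-up just makes this one-line observation explicit.
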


\section{Existence of minimizers}\label{sect4}\label{sec4}
\noindent
Let $2<p<6/(3-2s)$ and  consider the minimization problem \eqref{MA}. First of all observe that by Sobolev embedding, ${\mathscr M}_A>0$.
Once a solution to \eqref{MA} exists, 
due to the Lagrange Multiplier Theorem, there is $\lambda\in\R$ such that 
\begin{align*}
& \frac{c_s}{2}{\Re}\int_{\R^6}\frac{\big(e^{-\i (x-y)\cdot A\left(\frac{x+y}{2}\right)}u(x)-u(y)\big)\overline{\big(e^{-\i (x-y)\cdot A\left(\frac{x+y}{2}\right)}v(x)-v(y)\big)}}{|x-y|^{3+2s}}dxdy  \\
&+\Re\int_{\R^3} u\bar v dx=\lambda \Re\int_{\R^3} |u|^{p-2}u\bar v dx,\quad \text{for all $v\in H^s_A(\R^3,\C)$}.
\end{align*}
A multiple of $u$ removes the Lagrange multiplier $\lambda$ and provides a 
weak solution to \eqref{prob}.
Moreover, if we set
$$
{\mathscr M}_A(\lambda):=\inf_{u\in \Ss(\lambda)} \|u\|_{s,A}^2,
$$
where
$$
\Ss(\lambda):=\left\{u\in H^s_A(\R^3,\C):\int_{\R^3}|u|^p dx=\lambda\right\},
$$
we have that for every $\lambda>0$
\begin{equation}
\label{scala}
{\mathscr M}_A(\lambda)=\lambda^{\frac{2}{p}}{\mathscr M}_A.
\end{equation}


\subsection{Subcritical symmetric case}
\noindent
Let $2<p<\frac{6}{3-2s}$ and consider the problem
$$
{\mathscr M}_{A,r}=\inf_{u\in \Ss_r} \|u\|_{s,A}^2,
$$
where
$$ 
\Ss_r=\left\{u\in H^s_{A,{\rm rad}}(\R^3,\C): \int_{\R^3}|u|^p dx=1\right\}.
$$
First we give the following preliminary result.
\begin{lemma}[Compact radial embedding]
	\label{comp-emb}
	For every $2<q<6/(3-2s),$ the mapping
	$$
	H^s_{A,{\rm rad}}(\R^3,\C)\ni u\mapsto |u|\in L^q(\R^3),
	$$
	is compact.
	\end{lemma}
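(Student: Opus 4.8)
The plan is to reduce the claim, via the diamagnetic inequality, to a compactness statement for \emph{radial} functions of the ordinary fractional space $H^s(\R^3)$, and then to run a concentration--compactness argument: the local compact embedding of Lemma~\ref{embedd} handles the bounded region, the vanishing Lemma~\ref{lionslemma} handles the behaviour at infinity, and radial symmetry enters only through an elementary ball--packing estimate forbidding $L^2$--mass from escaping to infinity.

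First I would fix a bounded sequence $\{u_n\}_{n\in\N}$ in $H^s_{A,{\rm rad}}(\R^3,\C)$ and set $w_n:=|u_n|$. By Lemma~\ref{diamagnetic} the sequence $\{w_n\}$ is bounded in $H^s(\R^3)$, and each $w_n$ is a nonnegative radial function. Passing to a subsequence, $w_n\rightharpoonup w$ weakly in $H^s(\R^3)$ with $w\ge 0$ radial and, by Lemma~\ref{embedd} with $A=0$ together with a diagonal argument over the balls $B_k(0)$, $w_n\to w$ strongly in $L^q(B_k(0))$ and in $L^2(B_k(0))$ for every $k\in\N$, hence also a.e.\ in $\R^3$. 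Put $z_n:=w_n-w$; then $\{z_n\}$ is bounded in $H^s(\R^3)$, each $z_n$ is radial, $z_n\to 0$ in $L^q_{\rm loc}(\R^3)$ and in $L^2_{\rm loc}(\R^3)$, and it only remains to prove that $z_n\to 0$ strongly in $L^q(\R^3)$.

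By Lemma~\ref{lionslemma}, applied with the exponent $2$ (admissible since $2<6/(3-2s)$), it is enough to show that $\lim_n\sup_{\xi\in\R^3}\int_{B_R(\xi)}|z_n|^2\,dx=0$ for some $R>0$. Suppose not: after extracting a subsequence there are $R>0$, $\delta>0$ and points $\xi_n\in\R^3$ with $\int_{B_R(\xi_n)}|z_n|^2\,dx\ge\delta$ for all $n$. Extracting once more, either $\{\xi_n\}$ is bounded or $|\xi_n|\to\infty$. If $|\xi_n|\to\infty$, I would choose for each $n$ a family of $N_n$ points on the sphere $\partial B_{|\xi_n|}(0)$ which are pairwise at Euclidean distance at least $2R$; a standard packing estimate gives $N_n\ge c\,(|\xi_n|/R)^2$ for a universal $c>0$, so $N_n\to\infty$. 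Each of the corresponding balls of radius $R$ is the image of $B_R(\xi_n)$ under a rotation, and since $z_n$ is radial and the Lebesgue measure is rotation invariant, the integral of $|z_n|^2$ over each of them equals $\int_{B_R(\xi_n)}|z_n|^2\,dx\ge\delta$; as these $N_n$ balls are pairwise disjoint, this forces $\|z_n\|_{L^2(\R^3)}^2\ge N_n\delta\to\infty$, contradicting the boundedness of $\{z_n\}$. Hence $\{\xi_n\}$ is bounded, say $|\xi_n|\le M$, so $B_R(\xi_n)\subset B_{M+R}(0)$ and $\delta\le\int_{B_{M+R}(0)}|z_n|^2\,dx\to 0$ by the local $L^2$ convergence, again a contradiction. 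Therefore the vanishing hypothesis holds, Lemma~\ref{lionslemma} yields $z_n\to 0$ in $L^q(\R^3)$, and thus $w_n=|u_n|\to w$ in $L^q(\R^3)$; this is exactly the asserted compactness.

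The only genuinely non-routine step is the ball--packing argument ruling out concentration at infinity; everything else is the standard splitting of the $L^q$--norm into a bounded part, controlled by the local compact embedding of Lemma~\ref{embedd}, and a part near infinity, controlled through Lemma~\ref{lionslemma} once both vanishing and escape to infinity have been excluded --- and escape to infinity is precisely what radial symmetry prevents, since a spherical shell of radius $\rho$ in $\R^3$ carries volume of order $\rho^2$.
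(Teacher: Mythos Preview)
Your argument is correct. Both you and the paper begin with the same reduction via the diamagnetic inequality (Lemma~\ref{diamagnetic}), sending a bounded sequence in $H^s_{A,{\rm rad}}(\R^3,\C)$ to a bounded sequence of radial functions in $H^s(\R^3)$. From there the paper simply invokes Lions's compact embedding theorem for radial Sobolev functions \cite[Theorem~II.1]{Lions-c} as a black box, whereas you effectively reprove that result in the fractional setting: you split off the weak limit, control the remainder locally via the compact embedding of Lemma~\ref{embedd}, and then use the vanishing Lemma~\ref{lionslemma} together with a ball--packing estimate on spheres to forbid $L^2$--mass from escaping to infinity. Your route is more self-contained and makes explicit where radiality is used (the packing step), at the cost of being longer; the paper's route is a one-line citation but hides the mechanism.
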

	\begin{proof}
		By Lemma~\ref{diamagnetic}, namely the
		Diamagnetic inequality, we know that the mapping
		$$
		H^s_A(\R^3,\C)\ni u\mapsto |u|\in H^s(\R^3,\R),
		$$ 
		is continuous. Then, the assertion follows directly by \cite[Theorem II.1]{Lions-c}.
		\end{proof}
\noindent
We are ready to prove (\ref{i12}) of Theorem \ref{main}
\begin{theorem}[Existence of radial minimizers]
	\label{Ex1-rad}
	For any $2<p<6/(3-2s)$, the minimization problem ${\mathscr M}_{A,r}$
	admits a solution.\ In particular, there exists a nontrivial 
	radially symmetric weak solution $u\in H^s_{A,{\rm rad}}(\R^3,\C)$ to the problem \eqref{prob}.
\end{theorem}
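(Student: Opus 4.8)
The plan is to run the direct method of the calculus of variations on ${\mathscr M}_{A,r}$, with the radial symmetry playing the role of the compactness-restoring device. First I would fix a minimizing sequence $\{u_n\}_{n\in\N}\subset\Ss_r$, so that $\int_{\R^3}|u_n|^p\,dx=1$ and $\|u_n\|_{s,A}^2\to{\mathscr M}_{A,r}$. Since ${\mathscr M}_{A,r}>0$ (as already observed, by the Sobolev embedding of Lemma~\ref{embedd}), this sequence is bounded in the Hilbert space $H^s_{A,{\rm rad}}(\R^3,\C)$; hence, up to a subsequence, $u_n\rightharpoonup u$ weakly there for some $u\in H^s_{A,{\rm rad}}(\R^3,\C)$.

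The crucial step is to verify that $u$ is admissible and nontrivial. Here I would invoke the compact radial embedding of Lemma~\ref{comp-emb}: since $2<p<6/(3-2s)$, the map $v\mapsto|v|$ is compact from $H^s_{A,{\rm rad}}(\R^3,\C)$ into $L^p(\R^3)$, so that $|u_n|\to|u|$ strongly in $L^p(\R^3)$ and therefore $\int_{\R^3}|u|^p\,dx=\lim_n\int_{\R^3}|u_n|^p\,dx=1$; in particular $u\not\equiv 0$ and $u\in\Ss_r$. Weak lower semicontinuity of the Hilbert-space norm $\|\cdot\|_{s,A}$ then gives $\|u\|_{s,A}^2\le\liminf_n\|u_n\|_{s,A}^2={\mathscr M}_{A,r}$, while $u\in\Ss_r$ yields the reverse inequality; hence $\|u\|_{s,A}^2={\mathscr M}_{A,r}$, i.e.\ $u$ solves ${\mathscr M}_{A,r}$.

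To pass from the constrained minimizer to a weak solution of \eqref{prob}, I would argue as in the paragraph following \eqref{scala}: the maps $u\mapsto\|u\|_{s,A}^2$ and $u\mapsto\int_{\R^3}|u|^p\,dx$ are of class $C^1$ on $H^s_A(\R^3,\C)$ (the latter by Lemma~\ref{embedd}) and the constraint is nondegenerate at $u$, so the Lagrange Multiplier Theorem — together with the principle of symmetric criticality, which lets one test against arbitrary, not only radial, functions — provides $\lambda\in\R$ with $(-\Delta)^s_Au+u=\lambda|u|^{p-2}u$ weakly in $\R^3$; testing with $v=u$ and using $\|u\|_{s,A}^2={\mathscr M}_{A,r}$ and $\int_{\R^3}|u|^p\,dx=1$ forces $\lambda={\mathscr M}_{A,r}>0$, and the rescaling $w:=\lambda^{1/(p-2)}u$ (cf.\ \eqref{scala}) absorbs the multiplier, giving the desired nontrivial radial weak solution of \eqref{prob}. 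I expect the only real obstacle to be the loss of compactness of minimizing sequences on $\R^3$, which in this symmetric setting is completely disposed of by Lemma~\ref{comp-emb}; in the remaining assertions of Theorem~\ref{main} this is instead the central difficulty, handled through the concentration-compactness analysis of Lemma~\ref{lemma-dic}.
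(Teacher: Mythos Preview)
Your argument is correct and follows essentially the same route as the paper: a minimizing sequence in $\Ss_r$ is bounded in $H^s_{A,{\rm rad}}$, the compact radial embedding of Lemma~\ref{comp-emb} forces $|u_n|\to|u|$ in $L^p(\R^3)$ so the constraint survives, and lower semicontinuity (the paper phrases it via Fatou's lemma rather than weak lower semicontinuity of the Hilbert norm, but this is the same mechanism) yields the minimizer. The paper leaves the passage from minimizer to weak solution implicit, relying on the Lagrange-multiplier discussion at the start of Section~\ref{sect4}; your added detail on the rescaling and symmetric criticality is a reasonable elaboration of that step.
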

\begin{proof}
	Let $\{u_n\}_{n\in\N}\subset \Ss_r$ be a minimizing sequence for ${\mathscr M}_{A,r}$, namely $\|u_n\|_{L^p(\R^3)}=1$ for all $n$ and
	$\|u_n\|_{s,A}^2\to {\mathscr M}_{A,r}$, as $n\to\infty$. Then, up to a subsequence,  it converges weakly to some radial function $u$.\ On account
	of Lemma~\ref{embedd}, $u_n\to u$ a.e.\ up to a subsequence.
	By Lemma~\ref{comp-emb}, up to a subsequence
	$\{|u_n|\}_{n\in\N}$ converges strongly to some $v$ in $L^{q}(\R^3)$
	for every $2<q<6/(3-2s)$.\	Of course, $v=|u|$ by pointwise convergence. 
	In particular we can pass to the limit into the constraint $\|u_n\|_{L^p(\R^3)}=1$ to get $\|u\|_{L^p(\R^3)}=1$. Then $u$ is a solution to 
	${\mathscr M}_{A,r}$, since by virtue of Fatou Lemma 
	\begin{align*}
		{\mathscr M}_{A,r}& \leq 
		\int_{\R^3} |u(x)|^2dx+
		\int_{\R^6}\frac{|e^{-\i (x-y)\cdot A\left(\frac{x+y}{2}\right)}u(x)-u(y)|^2}{|x-y|^{3+2s}}dxdy  \\
		& \leq
		\liminf_n	\left(\int_{\R^3} |u_n(x)|^2dx+
		\int_{\R^6}\frac{|e^{-\i (x-y)\cdot A\left(\frac{x+y}{2}\right)}u_n(x)-u_n(y)|^2}{|x-y|^{3+2s}}dxdy\right)
		={\mathscr M}_{A,r}.
	\end{align*}
	This concludes the proof.
\end{proof}

\subsection{Subcritical case}

\noindent
In this subsection we study the minimization problem \eqref{MA} in the case $2<p<\frac{6}{3-2s}$.

\subsubsection{Constant magnetic field case}
Let us consider \eqref{MA} under the assumption that $A:\R^3\to\R^3$ is linear. The local case was extensively 
studied in \cite{EL} for the magnetic potential
$$
A(x_1,x_2,x_3)=\frac{b}{2} (-x_2,x_1,0),\quad b\in\R\setminus\{0\}.
$$

\noindent
Hence we can prove (\ref{iii12}) of Theorem \ref{main}.
\begin{theorem}[Existence of minimizers, I]
	\label{Ex1}
Assume that the potential $A:\R^3\to\R^3$ is linear. Then, for any $2<p<\frac{6}{3-2s}$ the minimization problem \eqref{MA}
admits a solution. 
\end{theorem}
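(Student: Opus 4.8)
The plan is to run the direct method on a minimizing sequence $\{u_n\}\subset\Ss$ for $\mathscr M_A$, using the concentration--compactness principle applied to the measures $\mu_n$ defined as in Lemma~\ref{lemma-dic}, and then to exploit the linearity of $A$ — via the gauge invariance of Lemma~\ref{gauge} — to translate the ``mass'' back to the origin so that local compactness (Lemma~\ref{embedd}) applies. First I would note that $\{u_n\}$ is bounded in $H^s_A(\R^3,\C)$ since $\|u_n\|_{s,A}^2\to\mathscr M_A$ and $\|u_n\|_{L^p}=1$; set $L:=\mathscr M_A>0$. By the Sobolev embedding \eqref{sobol} the total masses $\int_{\R^3}\mu_n\,dx=\|u_n\|_{s,A}^2\to L$, so after normalizing we are in the classical trichotomy: up to a subsequence, one of vanishing, dichotomy, or compactness (up to translations $\{\xi_n\}$) occurs.

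Vanishing is ruled out by the diamagnetic inequality (Remark~\ref{diamremark}): if $\sup_{\xi}\int_{B_R(\xi)}|u_n|^q\to 0$ for some $q\in(2,6/(3-2s))$, then since $|u_n|$ is bounded in $H^s(\R^3)$ (Lemma~\ref{diamagnetic}), Lemma~\ref{lionslemma} forces $u_n\to 0$ in $L^p(\R^3)$, contradicting $\|u_n\|_{L^p}=1$. Dichotomy is ruled out by Lemma~\ref{lemma-dic}: if it occurred, we would obtain $\{u_n^1\},\{u_n^2\}$ with disjoint supports at diverging distance, $\|u_n^1\|_{s,A}^2\to\beta$, $\|u_n^2\|_{s,A}^2\to L-\beta$, $\|u_n-u_n^1-u_n^2\|_{s,A}\to 0$, and $\|u_n^1\|_{L^p}^p+\|u_n^2\|_{L^p}^p\to 1$; writing $a_n^p:=\|u_n^1\|_{L^p}^p\to a^p$ and using the scaling identity \eqref{scala}, namely $\mathscr M_A(\lambda)=\lambda^{2/p}\mathscr M_A$, one gets in the limit $L\geq a^{2/p}\mathscr M_A+(1-a^p)^{2/p}\mathscr M_A$ with $a\in[0,1]$; since $t\mapsto t^{2/p}$ is strictly concave (as $2/p<1$) and $L=\mathscr M_A$, this is impossible unless $a\in\{0,1\}$, in which case the corresponding piece carries all the $L^p$-norm but strictly less than $L$ of the $\|\cdot\|_{s,A}^2$-energy, again contradicting $L=\mathscr M_A=\inf\|\cdot\|_{s,A}^2$ on $\Ss$. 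Hence compactness holds: there are $\{\xi_n\}\subset\R^3$ such that for every $\eps>0$ there is $R>0$ with $\int_{B_R(\xi_n)}\mu_n\,dx\geq L-\eps$ for $n$ large.

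Now comes the step that uses linearity of $A$. Set $\eta_n:=-A(\xi_n)$ and $v_n(x):=e^{-\i A(\xi_n)\cdot x}u_n(x+\xi_n)$. By Lemma~\ref{gauge} (which is precisely the gauge invariance available when $A$ is linear, since then $A_{\eta_n}=A(\cdot+\xi_n)+\eta_n=A$), we have $v_n\in H^s_A(\R^3,\C)$, $\|v_n\|_{s,A}=\|u_n\|_{s,A}$, and $\|v_n\|_{L^p}=\|u_n\|_{L^p}=1$; moreover $|v_n|=|u_n(\cdot+\xi_n)|$, so the mass of $v_n$ is now concentrated near the origin: for every $\eps>0$ there is $R>0$ with $\int_{B_R(0)}(|v_n|^2+\int_{\R^3}|\,(v_n)_x(x)-(v_n)_x(y)\,|^2/|x-y|^{3+2s}dy)\,dx\geq L-\eps$ for large $n$. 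Thus $\{v_n\}$ is a new minimizing sequence, bounded in $H^s_A(\R^3,\C)$, hence $v_n\rightharpoonup v$ weakly (up to subsequence) and, by Lemma~\ref{embedd}, $v_n\to v$ strongly in $L^p(B_R(0))$ and a.e. I would then split $\|v_n\|_{L^p(\R^3)}^p=\|v_n\|_{L^p(B_R(0))}^p+\|v_n\|_{L^p(B_R^c(0))}^p$; the tail is controlled by $\eps$ uniformly in $n$ using the concentration of mass together with the localized Sobolev inequality (Lemma~\ref{localized}) applied on $B_R^c(0)$ decomposed into annuli, exactly as the tail estimates in Lemma~\ref{lemma-dic}. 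Letting $\eps\to 0$ gives $\|v\|_{L^p(\R^3)}=1$, so $v\in\Ss$; by weak lower semicontinuity of $\|\cdot\|_{s,A}$ (equivalently, Fatou on the Gagliardo integrand together with $L^2$ lower semicontinuity), $\|v\|_{s,A}^2\leq\liminf_n\|v_n\|_{s,A}^2=\mathscr M_A$, hence $v$ attains the infimum. Finally, by the Lagrange multiplier argument recorded after \eqref{scala}, a suitable multiple of $v$ is a weak solution of \eqref{prob}.

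The main obstacle is the dichotomy step: although Lemma~\ref{lemma-dic} does the heavy lifting of producing the two nonlocal pieces $u_n^1,u_n^2$ with the four estimates \eqref{1-est}--\eqref{normep}, one must still verify that the concentration--compactness hypotheses \eqref{tre} are actually met in the dichotomy alternative — i.e. that the abstract trichotomy for the scalar measures $\mu_n$ can be upgraded to the quantitative splitting of $\mu_n$ into $\mu_n^1,\mu_n^2$ with an annular remainder of small mass — and then combine the resulting energy/norm splitting with the strict concavity of $t\mapsto t^{2/p}$ and the scaling law \eqref{scala} to derive the contradiction. The use of linearity is confined to the compactness case, where Lemma~\ref{gauge} is exactly what lets us recenter without leaving the space $H^s_A$; for a general $A$ the recentered potential $A(\cdot+\xi_n)+\eta_n$ need not equal $A$, which is why the general case requires assumption $\mathscr A$ and is treated separately.
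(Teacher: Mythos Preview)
Your proposal is correct and follows essentially the same route as the paper: concentration--compactness on the measures $\mu_n$, ruling out vanishing via the diamagnetic inequality and Lemma~\ref{lionslemma}, ruling out dichotomy via Lemma~\ref{lemma-dic} combined with the scaling law~\eqref{scala} and strict concavity of $t\mapsto t^{2/p}$, and then, in the tightness alternative, using the gauge invariance of Lemma~\ref{gauge} (available precisely because $A$ is linear) to recenter the sequence and conclude by local compactness and Fatou. The only cosmetic difference is in the tightness step: the paper controls the $L^p$-tail by first getting strong $L^2(\R^3)$-convergence from~\eqref{ext-d} plus compact embedding on balls and then interpolating, whereas you invoke the localized Sobolev inequality on annuli; both arguments work and yield the same conclusion.
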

\begin{proof}
Let $\{u_n\}_{n\in\N}\subset \Ss$ be a minimizing sequence for ${\mathscr M}_A$, namely $\|u_n\|_{L^p}=1$ for all $n$ and
$\|u_n\|_{s,A}^2\to {\mathscr M}_A$, as $n\to\infty$.\ We want to develop a concentration compactness argument \cite{L1}
on the measure of density defined by
$$
\mu_n(x):=|u_n(x)|^2+\int_{\R^3}\frac{|e^{-\i (x-y)\cdot A\left(\frac{x+y}{2}\right)}u_n(x)-u_n(y)|^2}{|x-y|^{3+2s}}dy,\quad x\in\R^3,\,\, n\in\N.
$$
Notice that $\{\mu_n\}_{n\in\N}\subset L^1(\R^3)$ and, since $\|u_n\|_{s,A}^2={\mathscr M}_A+o_n(1)$,
$$
\sup_{n\in\N}\int_{\R^3} \mu_n(x)dx<\infty.
$$
More precisely, we shall apply \cite[Lemma I.1]{L1} by taking $\rho_n=\mu_n$. 
Only vanishing, dichotomy or tightness (yielding compactness) are possible.\ Vanishing can be ruled out. 
In fact, assume by contradiction that, for all $R>0$ fixed, there holds
$$
\lim_{n}\sup_{\xi\in\R^N} \int_{B_R(\xi)} \mu_n(x)dx=0,
$$
namely
$$
\lim_{n}\sup_{\xi\in\R^N} \left(\int_{B_R(\xi)} |u_n(x)|^2dx+
\int_{B_R(\xi)\times\R^3}\frac{|e^{-\i (x-y)\cdot A\left(\frac{x+y}{2}\right)}u_n(x)-u_n(y)|^2}{|x-y|^{3+2s}}dxdy\right)=0.
$$
By Remark~\ref{diamremark} it follows that 
$$
\lim_{n}\sup_{\xi\in\R^N} \left(\int_{B_R(\xi)} |u_n(x)|^2dx+
\int_{B_R(\xi)\times\R^3}\frac{||u_n(x)|-|u_n(y)||^2}{|x-y|^{3+2s}}dxdy\right)=0.
$$
In particular, we get
$$
\lim_{n}\sup_{\xi\in\R^N} \||u_n|\|_{H^s(B_R(\xi))}^2
=0
$$
and this implies, by virtue of Lemma~\ref{localized}, that for any $R>0$
$$
\lim_{n}\sup_{\xi\in\R^N} \int_{B_R(\xi)} |u_n(x)|^pdx=0.
$$
Thus, in light of Lemma~\ref{lionslemma}, $u_n\to 0$ in $L^p$ which violates
the constraint $\|u_n\|_{L^p}=1$. Whence, vanishing cannot occur.\\
We now exclude the dicothomy. 
According to \cite[Lemma I.1]{L1}, this, precisely, means
that there exists $\beta\in (0,{\mathscr M}_A)$ such 
 that for all $\eps>0$ there are $\bar R>0$, $\bar n\geq 1$, 
 a sequence of radii $R_n\to+\infty$ and 
 $\{\xi_n\}_{n\in\N}\subset \R^3$ such that for $n\geq \bar n$ 
 \begin{align*}
 	& \left|\int_{\R^3}\mu_n^1(x)dx-\beta\right|\leq \eps,\qquad 
 	\mu_n^1(x):= 1_{B_{\bar R}(\xi_n)}\mu_n, \\
 	& \left|\int_{\R^3}\mu_n^2(x)dx-({\mathscr M}_A-\beta)\right|\leq \eps,\qquad
 	\mu_n^2(x):= 1_{B^c_{R_n}(\xi_n)}\mu_n,  \\
 	& \int_{\R^3}|\mu_n(x)-\mu_n^1(x)-\mu_n^2(x)|dx\leq \eps.   
 \end{align*}
Then, by virtue of Lemma~\ref{lemma-dic},
there exist two sequences $\{u_n^1\}_{n\in\N},\{u_n^2\}_{n\in\N}\subset 
	H^s_A(\R^3,\C)$ such that  ${\rm dist}({\rm supp}(u_n^1),{\rm supp}(u_n^2))\to+\infty$ and 
	\begin{align}
	& \left|\|u_n^1\|_{s,A}^2-\beta\right|\leq \eps,    \label{vicine-a} \\
	& \left|\|u_n^2\|_{s,A}^2-({\mathscr M}_A-\beta)\right|\leq \eps,  \label{vicine-b}\\
	& \left|1-\|u_n^1\|_{L^p}^p-\|u_n^2\|_{L^p}^p\right|\leq \eps, \label{vicine-p}
	\end{align}
	for any $n\ge \bar n$. Up to a subsequence, in view of \eqref{vicine-p}, 
	there exist $\vartheta_\eps,\omega_\eps\in (0,1)$ such that
	$$
	\|u_n^1\|_{L^p}^p=:\vartheta_{n,\eps}\to\vartheta_\eps,
	\qquad 
	\|u_n^2\|_{L^p}^p=:\omega_{n,\eps}\to \omega_\eps,
	\qquad
	|1-\vartheta_{\eps}-\omega_\eps|\leq \eps,\quad\text{as $n\to\infty$.}
	$$ 
	Notice that $\vartheta_\eps$ does not converge to $1$ as $\eps\to 0$, otherwise by \eqref{scala} and \eqref{vicine-a},
	for $\eps$ small we get
	\begin{equation*}
	\beta+\eps
	\geq \limsup_{n}\|u_n^1\|_{s,A}^2
	\geq \limsup_n {\mathscr M}_A(\vartheta_{n,\eps})
	={\mathscr M}_A \vartheta_\eps^{2/p}>\beta+\eps.
	\end{equation*}
	Of course $\vartheta_\eps$ does not converge to $0$ either, as $\eps\to 0$, otherwise $\omega_\eps\to 1$
and a contradiction would again follow by arguing as above on $u_n^2$ and  using \eqref{vicine-b}.\
	Whence, by means of \eqref{scala}, \eqref{vicine-a}, \eqref{vicine-b}, and since $\lambda^{2/p}+(1-\lambda)^{2/p}>1$ for any $\lambda\in(0,1)$,
	if $\eps$ is small enough
	\begin{align*}
	{\mathscr M}_A+2\eps
	&\geq \limsup_{n}\left(\|u_n^1\|_{s,A}^2+\|u_n^2\|_{s,A}^2\right)
	\geq \limsup_n \left({\mathscr M}_A(\vartheta_{n,\eps})+{\mathscr M}_A(\omega_{n,\eps})\right)\\
	&={\mathscr M}_A \left(\vartheta_\eps^{2/p}+\omega_\eps^{2/p}\right)>{\mathscr M}_A+2\eps,
	\end{align*}
a contradiction.\ This means that 
tightness needs to occur, namely there exists
a sequence $\{\xi_n\}_{n\in\N}$ such that for all $\eps>0$ there exists $R>0$ with
$$
\int_{B^c_R(\xi_n)} |u_n(x)|^2dx+
\int_{B^c_R(\xi_n)\times\R^3}
\frac{|e^{-\i (x-y)\cdot A\left(\frac{x+y}{2}\right)}u_n(x)-u_n(y)|^2}{|x-y|^{3+2s}}dxdy<\eps
$$
for any $n$. In particular, setting $\bar u_n(x):=u_n(x+\xi_n)$, for all $\eps>0$ there is $R>0$ such that
\begin{equation}
\label{ext-d}
\sup_{n\in\N}\int_{B^c_R(0)} |\bar u_n(x)|^2dx<\eps.
\end{equation}
Let us consider
$$
v_n(x):=e^{-\i A(\xi_n)\cdot x} \bar u_n(x),\quad x\in\R^3.
$$
Since, by Lemma~\ref{gauge}, $\|v_n\|_{s,A}=\|u_n\|_{s,A}$, we have that $\{v_n\}_{n\in\N}$ is bounded in $H^s_A(\R^3,\C)$.
Notice also that, since $|v_n(x)|=|\bar u_n(x)|$ for a.e.\ $x\in\R^3$ and any $n\in\N$, by \eqref{ext-d} we have
that for all $\eps>0$ there is $R>0$ such that
\begin{equation}\label{vn}
\sup_{n\in\N}\int_{B^c_R(0)} |v_n(x)|^2dx<\eps.
\end{equation}
Thus, in view of the compact injection 
provided by Lemma~\ref{embedd}, up to a subsequence, $\{v_n\}_{n\in\N}$ converges weakly,
strongly in $L^2(B_R(0),\C)$ and point-wisely to some function $v$.
Moreover, by \eqref{vn},
it follows that 
$v_n\to v$ strongly in $L^2(\R^3,\C)$ as well as
in $L^q(\R^3,\C)$ for any $2<q<6/(3-2s)$, via interpolation. Hence $\|v\|_{L^p}=1$. 
Hence, by Fatou's lemma, we have
$$
{\mathscr M}_A\leq \|v\|^2_{s,A}\leq \liminf_n \|v_n\|_{s,A}^2=\liminf_n \|u_n\|_{s,A}^2={\mathscr M}_A,
$$
which proves the existence of a minimizer.
\end{proof}

\subsubsection{Variable magnetic field case}

We now prove (\ref{ii12}) of Theorem \ref{main}.

\begin{theorem}[Existence of minimizers, II]
	\label{Ex2}
	Assume that the potential $A:\R^3\to\R^3$ satisfies assumption $\mathscr{A}$ and that
	\begin{equation}
	\label{stretta}
	{\mathscr M}_A<\inf_{\Xi\in \mathscr{X}} {\mathscr M}_{A_\Xi}.
	\end{equation}
	Then, for any $2<p<\frac{6}{3-2s},$ the minimization problem \eqref{MA}
	admits a solution.
\end{theorem}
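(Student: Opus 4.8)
The plan is to run the concentration--compactness scheme exactly as in the proof of Theorem~\ref{Ex1}, the only genuinely new ingredient being the treatment of a minimizing sequence whose mass escapes to infinity, where assumption $\mathscr{A}$ and the strict inequality \eqref{stretta} come into play. First I would fix a minimizing sequence $\{u_n\}_{n\in\N}\subset\Ss$, so that $\|u_n\|_{L^p}=1$ and $\|u_n\|_{s,A}^2\to\mathscr{M}_A$, and consider the densities $\mu_n(x)=|u_n(x)|^2+\int_{\R^3}|e^{-\i(x-y)\cdot A(\frac{x+y}{2})}u_n(x)-u_n(y)|^2|x-y|^{-3-2s}\,dy$, a bounded sequence in $L^1(\R^3)$, to which I apply \cite[Lemma~I.1]{L1} with $\rho_n=\mu_n$; thus only vanishing, dichotomy or tightness can occur.

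Vanishing is excluded verbatim as in Theorem~\ref{Ex1}: the pointwise diamagnetic inequality (Remark~\ref{diamremark}) combined with Lemma~\ref{localized} turns vanishing of $\mu_n$ on balls into vanishing of $\int_{B_R(\xi)}|u_n|^p\,dx$, and Lemma~\ref{lionslemma} then forces $u_n\to 0$ in $L^p(\R^3)$, contradicting the constraint. Dichotomy is excluded exactly as in Theorem~\ref{Ex1} as well, and here I stress that this step needs neither $\mathscr{A}$ nor \eqref{stretta}: Lemma~\ref{lemma-dic} produces $u_n^1,u_n^2\in H^s_A(\R^3,\C)$ with separating supports and satisfying \eqref{vicine-a}--\eqref{vicine-p}; since each $u_n^i$ is an admissible competitor for the problem with mass $\lambda_{i,n}:=\|u_n^i\|_{L^p}^p$, the scaling identity \eqref{scala} yields $\|u_n^i\|_{s,A}^2\ge\mathscr{M}_A\,\lambda_{i,n}^{2/p}$; passing to the limit, using that the limiting masses $\vartheta_\eps,\omega_\eps$ stay in $(0,1)$ (as in Theorem~\ref{Ex1}) and that $\lambda^{2/p}+(1-\lambda)^{2/p}>1$ for all $\lambda\in(0,1)$, one gets $\mathscr{M}_A+2\eps\ge(\vartheta_\eps^{2/p}+\omega_\eps^{2/p})\mathscr{M}_A>\mathscr{M}_A+2\eps$ for $\eps$ small, a contradiction.

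Hence tightness holds: there is $\{\xi_n\}_{n\in\N}\subset\R^3$ such that for every $\eps>0$ some $R>0$ gives $\int_{B^c_R(\xi_n)}|u_n|^2\,dx+\int_{B^c_R(\xi_n)\times\R^3}|e^{-\i(x-y)\cdot A(\frac{x+y}{2})}u_n(x)-u_n(y)|^2|x-y|^{-3-2s}\,dxdy<\eps$ for all $n$. If $\{\xi_n\}$ is bounded, then up to a subsequence $\xi_n\to\xi_0$, the sequence $u_n$ is tight around the fixed point $\xi_0$, and by the compact embedding of Lemma~\ref{embedd} together with interpolation one obtains $u_n\to u$ strongly in $L^p(\R^3)$ (so $\|u\|_{L^p}=1$) and, by Fatou, $\mathscr{M}_A\le\|u\|_{s,A}^2\le\liminf_n\|u_n\|_{s,A}^2=\mathscr{M}_A$, i.e.\ $u$ is a minimizer. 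If instead $\{\xi_n\}$ is unbounded, pass to a subsequence with $|\xi_n|\to\infty$ and invoke $\mathscr{A}$: choose $\{H_n\}$ and $A_\Xi$ as in \eqref{limA} with $A_n:=A(\cdot+\xi_n)+H_n\to A_\Xi$ pointwise and uniformly on compact sets, and set $v_n(x):=e^{\i H_n\cdot x}u_n(x+\xi_n)$. By Lemma~\ref{gauge2}, $v_n\in H^s_{A_n}(\R^3,\C)$ with $\|v_n\|_{s,A_n}=\|u_n\|_{s,A}$, so $\{v_n\}$ is bounded with $\|v_n\|_{s,A_n}^2\to\mathscr{M}_A$, and since $|v_n|=|u_n(\cdot+\xi_n)|$ the $L^2$-tails of $v_n$ are uniformly small; Lemma~\ref{gen-bound} gives $v_n\to v$ strongly in $L^q_{\mathrm{loc}}$ and a.e., and together with tightness and interpolation, strongly in $L^p(\R^3)$, whence $\|v\|_{L^p}=1$. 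Using $v_n\to v$ a.e.\ and $A_n\to A_\Xi$ pointwise, Fatou's lemma gives $[v]_{s,A_\Xi}^2\le\liminf_n[v_n]_{s,A_n}^2$ and, since $\|v_n\|_{L^2}\to\|v\|_{L^2}$, $\|v\|_{s,A_\Xi}^2\le\liminf_n\|v_n\|_{s,A_n}^2=\mathscr{M}_A$. As $\Xi=\{\xi_n\}\in\mathscr{X}$ and $v$ is admissible for $\mathscr{M}_{A_\Xi}$, this yields $\mathscr{M}_{A_\Xi}\le\mathscr{M}_A<\inf_{\Xi'\in\mathscr{X}}\mathscr{M}_{A_{\Xi'}}\le\mathscr{M}_{A_\Xi}$, a contradiction. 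Therefore $\{\xi_n\}$ must be bounded, a minimizer exists, and a suitable multiple of it solves \eqref{prob} through the Lagrange multiplier rule.

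The main obstacle is the lower semicontinuity step with varying potentials, namely justifying $\|v\|_{s,A_\Xi}^2\le\liminf_n\|v_n\|_{s,A_n}^2$ when $A_n\to A_\Xi$ only pointwise, and, relatedly, verifying that $v$ genuinely lies in $H^s_{A_\Xi}(\R^3,\C)$ (the closure of $C^\infty_c$) so as to be an admissible competitor for $\mathscr{M}_{A_\Xi}$; both are handled by the Fatou argument above together with the local bounds of Lemmata~\ref{localemb} and \ref{gen-bound}. A secondary technical point is upgrading the tightness of $\mu_n$ to $L^p$-tightness of the translated sequence, which is what converts local strong convergence into global strong $L^p$ convergence.
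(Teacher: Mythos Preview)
Your proposal is correct and follows essentially the same route as the paper: exclude vanishing and dichotomy exactly as in Theorem~\ref{Ex1}, obtain tightness with centers $\{\xi_n\}$, and then use assumption~$\mathscr{A}$ together with Lemma~\ref{gauge2}, Lemma~\ref{gen-bound} and Fatou to derive the contradiction $\mathscr{M}_{A_\Xi}\le\mathscr{M}_A<\mathscr{M}_{A_\Xi}$ when $\{\xi_n\}$ is unbounded, concluding on the original sequence otherwise. The concerns you flag about lower semicontinuity with varying $A_n$ and about $v\in H^s_{A_\Xi}$ are exactly the delicate points the paper also handles (or glosses over) via the same Fatou argument.
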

\begin{proof}
		By arguing as in the proof of Theorem~\ref{Ex1}, if $\{u_n\}_{n\in\N}$ is a minimizing sequence for ${\mathscr M}_A$,  we can
		find a sequence $\{\xi_n\}_{n\in\N}$
		such that for all $\eps>0$ there exists $R>0$ with
		$$
		\int_{B^c_R(\xi_n)} |u_n(x)|^2dx+
		\int_{B^c_R(\xi_n)\times\R^3}
		\frac{|e^{-\i (x-y)\cdot A\left(\frac{x+y}{2}\right)}u_n(x)-u_n(y)|^2}{|x-y|^{3+2s}}dxdy<\eps
		$$
		for any $n$.
		In particular, setting again $\bar u_n(x):=u_n(x+\xi_n)$, for all $\eps>0$ there is $R>0$ such that
		\begin{equation*}
		\sup_{n\in\N}\int_{B^c_R(0)} |\bar u_n(x)|^2dx<\eps.
		\end{equation*}
		Assume by contradiction that the sequence $\{\xi_n\}_{n\in\N}$ is unbounded. Then, since $A$ satisfies condition
		${\mathscr A}$, there exists a sequence $\{H_n\}_{n\in\N}\subset\R^3$ such that \eqref{limA} holds. We thus consider
		the sequence
		$$
		v_n(x):=e^{\i H_n\cdot x} \bar u_n(x), \quad x\in\R^3.
		$$
		By virtue of Lemma~\ref{gauge2} it follows that
		$$
		\sup_{n\in\N} \|v_n\|_{s,A_n}=\sup_{n\in\N}\|u_n\|_{s,A}<\infty,\quad A_n(x)=A(x+\xi_n)+H_n.
		$$
		Then, by combining Lemma~\ref{gen-bound} with
		\begin{equation*}
		\sup_{n\in\N}\int_{B^c_R(0)} |v_n(x)|^2dx<\eps,
		\end{equation*}
		up to a subsequence, $\{v_n\}_{n\in\N}$  is strongly convergent in $L^q(\R^3)$ for all $q\in[2,6/(3-2s))$ to some function $v$
		which satisfies the constraint $\|v\|_{L^p}=1$. By combining Lemma~\ref{gauge2} with Fatou's Lemma
		and \eqref{stretta}, we get	
		\begin{align*}
		{\mathscr M}_{A_\Xi} \leq \|v\|^2_{s,A_{\Xi}} 
		&=	\int_{\R^3} |v|^2 dx
		+\int_{\R^6}
		\frac{|e^{-\i (x-y)\cdot A_{\Xi}\left(\frac{x+y}{2}\right)}v(x)-v(y)|^2}{|x-y|^{3+2s}}dxdy  \\
		&\leq \lim_n \int_{\R^3} |v_n|^2 dx
		+\liminf_n \int_{\R^6}
		\frac{|e^{-\i (x-y)\cdot A_n\left(\frac{x+y}{2}\right)}v_n(x)-v_n(y)|^2}{|x-y|^{3+2s}}dxdy \\
		&= \lim_n \int_{\R^3} |u_n|^2 dx
		+\liminf_n \int_{\R^6}
		\frac{|e^{-\i (x-y)\cdot A\left(\frac{x+y}{2}\right)}u_n(x)-u_n(y)|^2}{|x-y|^{3+2s}}dxdy \\
		&={\mathscr M}_{A}<\inf_{\Xi\in \mathscr{X}} {\mathscr M}_{A_\Xi}\leq {\mathscr M}_{A_\Xi},
		\end{align*}
		a contradiction.\ Therefore, it follows that  $\{\xi_n\}_{n\in\N}$ is bounded. The assertion then 
		immediately follows arguing on the original sequence $\{u_n\}_{n\in\N}$.
	\end{proof}
%


\subsection{Critical case}\label{subscrit}
\noindent
Let $D^s_A(\R^3,\C)$ be the completion of $C^\infty_c(\R^3,\C)$ with respect to the semi-norm $[\cdot]_{s,A}$.\
The functions of $D^s_A(\R^3,\C)$ satisfy the Sobolev inequality stated in formula~\eqref{sobol}.\
The space $D^s_A(\R^3,\C)$ is a real Hilbert space with respect to the scalar product 
$$
(u,v)_{s,A}:=\frac{c_s}{2}{\Re}\int_{\R^6}\frac{\left(e^{-\i (x-y)\cdot A\left(\frac{x+y}{2}\right)}u(x)-u(y)\right)\overline{\left(e^{-\i (x-y)\cdot A\left(\frac{x+y}{2}\right)}v(x)-v(y)\right)}}{|x-y|^{3+2s}}dxdy.
$$
\noindent
We consider the minimization problem \eqref{MAC}. 
Of course, by density, we have
$$
{\mathscr M}_A^c=\inf_{u\in \Ss^c\cap C^\infty_c(\R^3,\C)} [u]_{s,A}^2,
\quad \,\,\,
{\mathscr M}_0^c=\inf_{u\in \Ss^c_0\cap C^\infty_c(\R^3,\C)} [u]_{s,0}^2.
$$
where $\Ss^c_0=\{u\in D^s(\R^3,\C):\|u\|_{L^{6/(3-2s)}}=1\}$.
Moreover, since $[|u|]_{s,0}\leq [u]_{s,0}$, we have
\begin{equation}
\label{real-rap}
{\mathscr M}_0^c=\inf_{u\in \Ss^c_0\cap C^\infty_c(\R^3,\R)} [u]_{s,0}^2.
\end{equation}
\begin{remark}\rm
	\label{classif}
It is known \cite{classif,costav} that all the real valued fixed sign solutions to ${\mathscr M}_0^c$ are given by
\begin{equation*}
\label{Talentiane}
{\mathscr U}_{z,\eps} (x)=d_{s}\left(\frac{\eps}{\eps^2+|x-z|^2}\right)^{\frac{3-2s}{2}}
\end{equation*}
for arbitrary $\eps>0$, $z\in \R^3$ and that these are also the unique fixed sign solutions to
\begin{equation*}
(-\Delta)^s u=u^{\frac{3+2s}{3-2s}}\quad \text{in $\R^3$.}
\end{equation*}
\end{remark}
\noindent
We now prove the following crucial lemma

\begin{lemma}
	\label{livelli}
It holds ${\mathscr M}_A^c={\mathscr M}_0^c$.
\end{lemma}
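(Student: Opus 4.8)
The plan is to prove the two inequalities separately; one is immediate and the other requires a concentration argument.

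\emph{Step 1: ${\mathscr M}_0^c \le {\mathscr M}_A^c$.} For any $u \in \Ss^c$ the pointwise diamagnetic inequality of Remark~\ref{diamremark} gives $[|u|]_{s,0} \le [u]_{s,A}$, while $\||u|\|_{L^{6/(3-2s)}} = \|u\|_{L^{6/(3-2s)}} = 1$, so $|u| \in \Ss^c_0$. Hence ${\mathscr M}_0^c \le [|u|]_{s,0}^2 \le [u]_{s,A}^2$, and taking the infimum over $\Ss^c$ yields the claim.

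\emph{Step 2: ${\mathscr M}_A^c \le {\mathscr M}_0^c$ via concentration.} The idea is to exploit that $[\cdot]_{s,0}$ is scale invariant whereas the $L^2$ mass has subcritical scaling, so that a highly concentrated profile feels the magnetic phase only through a vanishing correction — and no gauge change is needed. Fix $\delta>0$ and, using the density formula \eqref{real-rap}, choose a real-valued $w \in C^\infty_c(\R^3,\R)$ with $\|w\|_{L^{6/(3-2s)}} = 1$, $[w]_{s,0}^2 < {\mathscr M}_0^c + \delta$, and $\supp w \subset B_R(0)$. For $\lambda>0$ set $w_\lambda(x) := \lambda^{(3-2s)/2} w(\lambda x)$; a change of variables gives $\|w_\lambda\|_{L^{6/(3-2s)}} = 1$, $[w_\lambda]_{s,0}^2 = [w]_{s,0}^2$, $\|w_\lambda\|_{L^2}^2 = \lambda^{-2s}\|w\|_{L^2}^2$, and $\supp w_\lambda \subset B_{R/\lambda}(0)$. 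In particular $w_\lambda \in \Ss^c$, so ${\mathscr M}_A^c \le [w_\lambda]_{s,A}^2$, and it remains to show $\limsup_{\lambda\to\infty}[w_\lambda]_{s,A}^2 \le [w]_{s,0}^2$.

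\emph{Step 3: the decomposition and the remainder estimate.} Since $w_\lambda$ is real, I would write
\[
e^{-\i (x-y)\cdot A\left(\frac{x+y}{2}\right)}w_\lambda(x) - w_\lambda(y) = \big(w_\lambda(x)-w_\lambda(y)\big) + \Big(e^{-\i (x-y)\cdot A\left(\frac{x+y}{2}\right)} - 1\Big)w_\lambda(x),
\]
and use $|a+b|^2 \le (1+\eps)|a|^2 + (1+\tfrac1\eps)|b|^2$ to obtain, for every $\eps>0$,
\[
[w_\lambda]_{s,A}^2 \le (1+\eps)[w]_{s,0}^2 + \Big(1+\tfrac1\eps\Big)\frac{c_s}{2}\int_{\R^6}\frac{\big|e^{-\i (x-y)\cdot A\left(\frac{x+y}{2}\right)}-1\big|^2\,w_\lambda(x)^2}{|x-y|^{3+2s}}\,dx\,dy.
\]
The remainder term is controlled by splitting the $y$–integral at $|x-y|=1$: since $w_\lambda(x)^2$ forces $x \in B_{R/\lambda}(0)$, on $\{|x-y|\le1\}$ the argument $\tfrac{x+y}{2}$ ranges in a fixed bounded set where $A$ is bounded (continuity of $A$), hence $\big|e^{-\i(x-y)\cdot A(\frac{x+y}{2})}-1\big|\le C|x-y|$ and $\int_{|x-y|\le1}|x-y|^{-1-2s}\,dy$ is finite, while on $\{|x-y|>1\}$ one bounds the factor by $2$ and $\int_{|x-y|>1}|x-y|^{-3-2s}\,dy$ is finite; both contributions reduce to $C\|w_\lambda\|_{L^2}^2 = C\lambda^{-2s}\|w\|_{L^2}^2 \to 0$. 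Thus $\limsup_\lambda [w_\lambda]_{s,A}^2 \le (1+\eps)[w]_{s,0}^2 \le (1+\eps)({\mathscr M}_0^c+\delta)$; letting $\delta\to0$ and then $\eps\to0$ gives ${\mathscr M}_A^c \le {\mathscr M}_0^c$, and combined with Step~1, ${\mathscr M}_A^c = {\mathscr M}_0^c$.

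\emph{Main obstacle.} The only genuine point is the remainder estimate, and its heart is the observation that under the concentration $w \mapsto w_\lambda$ the $L^2$ mass decays at the subcritical rate $\lambda^{-2s}$: this is precisely what makes the magnetic perturbation negligible, so that — unlike in some gauge-based arguments — no phase modulation is required. The one care needed is that the short-range part of the kernel only ever probes a fixed compact neighborhood of the origin, where continuity of $A$ provides the needed boundedness.
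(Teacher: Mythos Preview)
Your proof is correct and uses the same core idea as the paper: the diamagnetic inequality for one direction, and a concentrating rescaling of a near-optimal compactly supported real profile for the other. The only difference is in how the error is handled. The paper first changes variables so that the scaling parameter $\sigma$ enters only through the phase, writes the \emph{exact} difference
\[
[u_\sigma]_{s,A}^2-[u]_{s,0}^2=\int_{K\times K}\frac{2\big(1-\cos\big(\sigma(x-y)\cdot A\big(\sigma\tfrac{x+y}{2}\big)\big)\big)u(x)u(y)}{|x-y|^{3+2s}}\,dxdy,
\]
and concludes by dominated convergence on the fixed compact $K\times K$ as $\sigma\to0$. You instead stay in the original variables, use Young's inequality $|a+b|^2\le(1+\eps)|a|^2+(1+\tfrac1\eps)|b|^2$ to isolate a remainder, and observe that this remainder is controlled by $\|w_\lambda\|_{L^2}^2=\lambda^{-2s}\|w\|_{L^2}^2\to0$. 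The paper's route is marginally cleaner (no auxiliary $\eps$ and it yields the exact limit rather than an upper bound), while yours makes transparent that the mechanism is precisely the subcritical $L^2$ scaling of the concentrating profile; both rely on the same local boundedness of $A$ near the concentration point.
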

\begin{proof}
Let $\eps>0$ and $u\in C^\infty_c(\R^3,\R)$ be such that
$$
\int_{\R^3}|u|^{6/(3-2s)} dx=1,\quad \,\,
[u]_{s,0}^2\leq {\mathscr M}_0^c+\eps,
$$	
in light of formula~\eqref{real-rap} for ${\mathscr M}_0^c$.\ Consider now the scaling
$$
u_\sigma(x)=\sigma^{-\frac{3-2s}{2}}u\Big(\frac{x}{\sigma}\Big),\quad \sigma>0,\,\,\, x\in\R^3.
$$
It is readily checked that 
$$
\int_{\R^3}|u_\sigma|^{6/(3-2s)} dx=\int_{\R^3}|u|^{6/(3-2s)} dx=1,\quad 
[u_\sigma]_{s,0}=[u]_{s,0},\quad \text{for all $\sigma>0$.}
$$
There holds 
that
\[
[u_\sigma]^2_{s,A} = \int_{\R^6}\frac{|e^{-\i \sigma (x-y)\cdot A\left(\sigma\frac{x+y}{2}\right)}u(x)-u(y)|^2}{|x-y|^{3+2s}}dxdy.
\]
Then, we compute 
\begin{align*}
[u_\sigma]^2_{s,A}-[u]_{s,0}^2 &=\int_{\R^6}\frac{|e^{-\i \sigma (x-y)\cdot A\left(\sigma\frac{x+y}{2}\right)}u(x)-u(y)|^2-|u(x)-u(y)|^2}{|x-y|^{3+2s}}dxdy \\
& =\int_{\R^6} \varTheta_\sigma(x,y)dxdy=\int_{K\times K} \varTheta_\sigma(x,y)dxdy,
\end{align*}
where $K$ is the compact support of  $u$ and 
\begin{align*}
\varTheta_\sigma(x,y)
&:=\frac{2\Re\left(\Big(1-e^{-\i \sigma (x-y)\cdot A\left(\sigma\frac{x+y}{2}
		\right)}\Big)u(x) u(y)\right)}{|x-y|^{3+2s}}\\
&=\frac{2\big(1-\cos\big(\sigma (x-y)\cdot A\big(\sigma\frac{x+y}{2}\big)\big)\big)u(x)u (y)}{|x-y|^{3+2s}},
\end{align*}
a.e. in $\R^6$.
Of course $\varTheta_\sigma(x,y)\to 0$ for a.e.\ $(x,y)\in\R^6$ as $\sigma\to 0$.
Since $A$ is locally 
bounded then
\[
1-\cos\Big(\sigma (x-y)\cdot A\Big(\sigma\frac{x+y}{2}\Big)\Big) \leq C |x-y|^2
\quad x,y\in K.
\]
Therefore, since $u$ is bounded, it follows that for some $C>0$
\begin{align*}
& |\varTheta_\sigma(x,y)|\leq \frac{C}{|x-y|^{1+2s}},\quad\text{for $x,y\in K$ with $|x-y|<1$,} \\
& |\varTheta_\sigma(x,y)|\leq \frac{C}{|x-y|^{3+2s}},\quad\text{for $x,y\in K$ with $|x-y|\geq 1$.}
\end{align*}
Then overall, we have  
$$
|\varTheta_\sigma(x,y)|\leq w(x,y),\quad w(x,y)=C\min\left\{\frac{1}{|x-y|^{1+2s}},\frac{1}{|x-y|^{3+2s}}\right\}\quad\text{for $x,y\in K$,}
$$ 
for a suitable constant $C>0$.
Notice that $w\in L^1(K\times K)$, since
\begin{align*}
\int_{K\times K} w(x,y) dxdy&=\int_{(K\times K)\cap\{|x-y|<1\}} w(x,y) dxdy+\int_{(K\times K)\cap\{|x-y|\geq 1\}}  w(x,y) dxdy \\
& \leq C\int_{\{|z|<1\}} \frac{1}{|z|^{1+2s}} dz+C\int_{\{|z|\geq 1\}}  \frac{1}{|z|^{3+2s}}dz<\infty.
\end{align*}
Then, by the Dominated Convergence Theorem, we obtain 
$$
{\mathscr M}_A^c\leq \lim_{\sigma\to 0}[u_\sigma]^2_{s,A}=[u]_{s,0}^2 \leq {\mathscr M}_0^c+\eps,
$$
hence ${\mathscr M}_A^c\leq {\mathscr M}_0^c$ by the arbitrariness of $\eps$. Since the opposite 
inequality is trivial through the Diamagnetic inequality, the desired assertion follows.
\end{proof}
\noindent
Thus we can prove (\ref{i13}) of Theorem \ref{main-2}.

\begin{theorem}[Representation of solutions]
	Assume that ${\mathscr M}_A^c$ admits a solutions $u\in D^s_A(\R^3,\C)$. Then there exist
	$z\in\R^3$, $\eps>0$ and a function $\vartheta_A:\R^3\to\R$ such that
	$$
	u(x)=d_{s}\left(\frac{\eps}{\eps^2+|x-z|^2}\right)^{\frac{3-2s}{2}}e^{\i \vartheta_A(x)},\quad x\in\R^3.
	$$ 
\end{theorem}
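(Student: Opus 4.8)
The plan is to push the minimizer $u$ down to the non-magnetic critical problem by means of the diamagnetic inequality, and then to read off its shape from the classification recalled in Remark~\ref{classif}. Concretely, assume $u\in D^s_A(\R^3,\C)$ solves ${\mathscr M}_A^c$, so that $\|u\|_{L^{6/(3-2s)}}=1$ and $[u]_{s,A}^2={\mathscr M}_A^c$.

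By the pointwise diamagnetic inequality (Remark~\ref{diamremark}), $||u(x)|-|u(y)||\le|e^{-\i(x-y)\cdot A((x+y)/2)}u(x)-u(y)|$ for a.e.\ $x,y\in\R^3$, hence $[|u|]_{s,0}\le[u]_{s,A}<\infty$; since moreover $\||u|\|_{L^{6/(3-2s)}}=\|u\|_{L^{6/(3-2s)}}=1$, the function $|u|$ belongs to $D^s(\R^3)$ and lies in the constraint set $\Ss^c_0$. Combining this with Lemma~\ref{livelli} gives
\[
[|u|]_{s,0}^2\le[u]_{s,A}^2={\mathscr M}_A^c={\mathscr M}_0^c,
\]
and since ${\mathscr M}_0^c$ is the infimum of $[\cdot]_{s,0}^2$ over $\Ss^c_0$, equality must hold: $|u|$ is a non-negative minimizer of ${\mathscr M}_0^c$, i.e.\ a fixed-sign solution. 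By Remark~\ref{classif} there are therefore $z\in\R^3$ and $\eps>0$ with $|u(x)|={\mathscr U}_{z,\eps}(x)=d_s\bigl(\eps/(\eps^2+|x-z|^2)\bigr)^{\frac{3-2s}{2}}$.

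Finally, the classification forces $|u|$ to be strictly positive everywhere, and this is exactly what lets us extract the phase: for a.e.\ $x$ the complex number $u(x)/|u(x)|$ has modulus one, so we may define $\vartheta_A(x)$ to be an argument of it (and set $\vartheta_A=0$ on the residual null set). This yields $u(x)=|u(x)|e^{\i\vartheta_A(x)}=d_s\bigl(\eps/(\eps^2+|x-z|^2)\bigr)^{\frac{3-2s}{2}}e^{\i\vartheta_A(x)}$ for a.e.\ $x\in\R^3$, which is the asserted representation. I do not foresee a serious obstacle; the only points deserving care are checking that $|u|$ is a genuine competitor for ${\mathscr M}_0^c$ (membership in $D^s(\R^3)$, which follows from $|u|\in L^{6/(3-2s)}(\R^3)$ together with $[|u|]_{s,0}<\infty$ and the characterization of $D^s(\R^3)$ as the space of $L^{6/(3-2s)}$ functions with finite Gagliardo seminorm) and that the non-negative function $|u|$ qualifies as a fixed-sign solution in Remark~\ref{classif}, whose strict positivity is then returned by that classification.
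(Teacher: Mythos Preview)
Your argument is correct and follows essentially the same route as the paper: use the diamagnetic inequality together with Lemma~\ref{livelli} to sandwich $[|u|]_{s,0}^2$ between ${\mathscr M}_0^c$ and ${\mathscr M}_A^c={\mathscr M}_0^c$, conclude that $|u|$ is a fixed-sign minimizer for ${\mathscr M}_0^c$, and then invoke Remark~\ref{classif}. Your version is simply more explicit about the membership of $|u|$ in $D^s(\R^3)$ and the extraction of the phase $\vartheta_A$.
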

\begin{proof}
	If  $u\in D^s_A(\R^3,\C)$ is a solution to ${\mathscr M}_A^c$, then by the Diamagnetic inequality and Lemma~\ref{livelli},
	$$
	{\mathscr M}_A^c={\mathscr M}_0^c\leq [|u|]_{s,0}^2\leq [u]_{s,A}^2={\mathscr M}_A^c.
	$$
	Then, it follows that ${\mathscr M}_0^c=[|u|]_{s,0}^2$, which implies the assertion by Remark~\ref{classif}.
\end{proof}

\noindent
 For a function $u\in D^s_A(\R^3,\C)$ we define $\Upsilon_u^A:\R^6\to\R$ by setting
$$
\Upsilon_u^A(x,y):=2\Re\Big(|u(x)||u(y)|-e^{-\i (x-y)\cdot A\big(\frac{x+y}{2}\big)}u(x)\bar u(y)\Big),\quad 
\text{a.e.\ in $\R^6$}. 
$$
Finally we have
\begin{theorem}[Nonexistence]
	\label{nonex}
	Assume that for a function $u\in D^s_A(\R^3,\C)$ we have
	\begin{equation}
	\label{Aass-00}
	 \Upsilon^A_u(x,y)>0\text{ on $E\subset\R^6$ with ${\mathcal L}^6(E)>0$.}
	\end{equation}
	Then $u$ cannot be a solution to problem ${\mathscr M}_A^c$.
\end{theorem}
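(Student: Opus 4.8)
The plan is to derive the statement from a \emph{strict} version of the diamagnetic inequality together with Lemma~\ref{livelli}. First I would record the elementary pointwise identity, obtained by expanding both squares exactly as in the proof of Lemma~\ref{diamagnetic},
\[
\big|e^{-\i (x-y)\cdot A\left(\frac{x+y}{2}\right)}u(x)-u(y)\big|^2-\big||u(x)|-|u(y)|\big|^2=\Upsilon^A_u(x,y),\qquad\text{a.e.\ in }\R^6,
\]
and note that by Remark~\ref{diamremark} the left-hand side is nonnegative a.e., so $\Upsilon^A_u\geq 0$ a.e. Multiplying by $c_s/(2|x-y|^{3+2s})>0$ and integrating over $\R^6$ yields
\[
[u]_{s,A}^2-[|u|]_{s,0}^2=\frac{c_s}{2}\int_{\R^6}\frac{\Upsilon^A_u(x,y)}{|x-y|^{3+2s}}\,dxdy.
\]
Under hypothesis~\eqref{Aass-00} the integrand is nonnegative everywhere and strictly positive on the set $E$, which has positive Lebesgue measure; hence the integral is strictly positive, and therefore $[|u|]_{s,0}^2<[u]_{s,A}^2$.

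Next I would argue by contradiction: suppose $u$ solves ${\mathscr M}_A^c$, so $\|u\|_{L^{6/(3-2s)}}=1$ and $[u]_{s,A}^2={\mathscr M}_A^c$. The integrated diamagnetic inequality gives $[|u|]_{s,0}\leq [u]_{s,A}<\infty$, and since $\big||u|\big|=|u|$ pointwise we also have $\||u|\|_{L^{6/(3-2s)}}=1$; thus $|u|$ is an admissible competitor for ${\mathscr M}_0^c$, i.e.\ $|u|\in\Ss^c_0$. Consequently
\[
{\mathscr M}_0^c\leq [|u|]_{s,0}^2<[u]_{s,A}^2={\mathscr M}_A^c,
\]
which contradicts Lemma~\ref{livelli}, namely ${\mathscr M}_A^c={\mathscr M}_0^c$. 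Hence no solution to ${\mathscr M}_A^c$ can satisfy \eqref{Aass-00}.

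The computation is routine, so the only point requiring a little care is the claim $|u|\in D^s(\R^3,\C)$, that is, membership in the \emph{completion} of $C^\infty_c$ rather than mere finiteness of the seminorm. I would settle it as follows: take $u_k\in C^\infty_c(\R^3,\C)$ with $u_k\to u$ in $D^s_A(\R^3,\C)$; the pointwise bound $\big||u_k|-|u_j|\big|\leq |u_k-u_j|$ combined with Remark~\ref{diamremark} gives $[|u_k|-|u_j|]_{s,0}\leq [u_k-u_j]_{s,A}\to 0$, so $\{|u_k|\}$ is Cauchy for $[\cdot]_{s,0}$, and after a standard mollification replacing each Lipschitz function $|u_k|$ by a $C^\infty_c$ one, it converges in $D^s(\R^3,\C)$ to a limit that agrees with $|u|$ a.e.\ by \eqref{sobol}. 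Alternatively, the entire argument can be phrased on $C^\infty_c(\R^3,\C)$ through the Sobolev quotient $[v]_{s,A}^2/\|v\|_{L^{6/(3-2s)}}^2$, which by density computes both ${\mathscr M}_A^c$ and ${\mathscr M}_0^c$, thereby bypassing the membership issue altogether.
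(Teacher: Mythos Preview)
Your core argument is correct and is exactly the paper's: the pointwise identity, the strict inequality $[|u|]_{s,0}^2<[u]_{s,A}^2$ coming from \eqref{Aass-00}, and the contradiction with Lemma~\ref{livelli} via ${\mathscr M}_0^c\leq[|u|]_{s,0}^2$. (Your integrated identity even carries the kernel $c_s/(2|x-y|^{3+2s})$, which the paper's displayed formula omits---your version is the accurate one.)

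One small flaw in your side remark on $|u|\in D^s(\R^3)$: the inequality $[|u_k|-|u_j|]_{s,0}\leq[u_k-u_j]_{s,A}$ does \emph{not} follow from the reverse triangle inequality together with Remark~\ref{diamremark}. The diamagnetic bound applied to $u_k-u_j$ yields $[\,|u_k-u_j|\,]_{s,0}\leq[u_k-u_j]_{s,A}$, which is a different quantity; in general $\big|\,|a|-|b|-|c|+|d|\,\big|$ is not controlled by $|(a-b)-(c-d)|$ (take $a=2,\ b=1,\ c=\iu,\ d=2\iu$). The paper itself simply asserts $|u|\in D^s(\R^3)$ without proof, and the cleanest justification is not a Cauchy argument but the standard fact that on the whole space $D^s(\R^3)=\{v\in L^{6/(3-2s)}(\R^3):[v]_{s,0}<\infty\}$, so finiteness of $[|u|]_{s,0}$ (from Remark~\ref{diamremark}) together with $|u|\in L^{6/(3-2s)}$ already gives membership.
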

\begin{proof}
	For every $u\in D^s_A(\R^3,\C)$ we have $|u|\in D^s(\R^3)$ and there holds 
	$$
	[u]_{s,A}^2-[|u|]_{s,0}^2=\int_{\R^6}\Upsilon^A_u(x,y)dxdy.
	$$
	Assume by contradiction	that $u$ solves ${\mathscr M}_A^c$. 
	Then, since $\|u\|_{L^{6/(3-2s)}}=1$, by Lemma \ref{livelli} and assumption 
	\eqref{Aass-00}, we conclude that ${\mathscr M}_0^c={\mathscr M}_A^c=[u]_{s,A}^2>[|u|]_{s,0}^2\geq {\mathscr M}_0^c,$
	a contradiction.
\end{proof}

%
\noindent
As a consequence we get (\ref{ii13}) of Theorem \ref{main-2}.

\begin{corollary}[Nonexistence of constant phase solutions]
Assume that 
$$
(x-y)\cdot A(x+y)\not \equiv k\pi,\quad\text{for some $k\in\N$ and on some $E\subset\R^6$ with ${\mathcal L}^6(E)>0$.}
$$
Then ${\mathscr M}_A^c$ does not admit solutions $u\in D^s_A(\R^3,\C)$ of the form $u(x)=e^{\i \vartheta} v(x)$
for some $\theta\in\R$ and $v\in D^s_A(\R^3,\R)$ of fixed sign.
\end{corollary}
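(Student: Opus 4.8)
The plan is to deduce the Corollary from the nonexistence criterion of Theorem~\ref{nonex}: it suffices to check that, for any candidate $u$ of the prescribed form, the function $\Upsilon^A_u$ is strictly positive on a set of positive measure. I would therefore argue by contradiction, assuming that ${\mathscr M}_A^c$ is attained at some $u\in D^s_A(\R^3,\C)$ with $u(x)=e^{\i\vartheta}v(x)$ for some $\vartheta\in\R$ and some $v\in D^s_A(\R^3,\R)$ of fixed sign.

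The first step is a direct computation of $\Upsilon^A_u$. Since $v$ is real valued, the phase factor cancels in the product: $u(x)\bar u(y)=v(x)\overline{v(y)}=v(x)v(y)\in\R$; and since $v$ does not change sign, $v(x)v(y)=|v(x)||v(y)|=|u(x)||u(y)|$ for a.e.\ $x,y\in\R^3$. Inserting this into the definition of $\Upsilon^A_u$, the cross term becomes real and one finds
\[
\Upsilon^A_u(x,y)=2\,|u(x)||u(y)|\Big(1-\cos\big((x-y)\cdot A\big(\tfrac{x+y}{2}\big)\big)\Big)\ge 0\qquad\text{for a.e.\ }(x,y)\in\R^6.
\]
Thus $\Upsilon^A_u$ is automatically nonnegative, and the only issue is its \emph{strict} positivity on a nonnegligible set.

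To obtain this I would use that $u$ is a minimizer. By the representation of solutions established above (Theorem~\ref{main-2}(\ref{i13}), which rests on the classification recalled in Remark~\ref{classif}), $|u|$ coincides with an Aubin--Talenti bubble ${\mathscr U}_{z,\eps}$, which is strictly positive everywhere; hence $|u(x)||u(y)|>0$ on all of $\R^6$. On the other hand, the hypothesis on $A$ says precisely that the set
\[
E:=\Big\{(x,y)\in\R^6:\ (x-y)\cdot A\big(\tfrac{x+y}{2}\big)\notin 2\pi\Z\Big\}
\]
has positive Lebesgue measure, and on $E$ we have $1-\cos\big((x-y)\cdot A(\tfrac{x+y}{2})\big)>0$. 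Combining the two facts, $\Upsilon^A_u>0$ a.e.\ on $E$ with ${\mathcal L}^6(E)>0$, so assumption \eqref{Aass-00} of Theorem~\ref{nonex} holds and that theorem forbids $u$ to be a solution of ${\mathscr M}_A^c$ --- a contradiction, which proves the Corollary.

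I expect the only delicate points to be bookkeeping: justifying the pointwise identity for $\Upsilon^A_u$ a.e.\ (it uses only that $v$ is real and of constant sign, so that $e^{\i\vartheta}$ drops out of $u(x)\bar u(y)$ and $v(x)v(y)$ agrees with $|u(x)||u(y)|$), and interpreting the stated condition on $A$ in the form actually needed, namely that $(x-y)\cdot A(\tfrac{x+y}{2})$ avoids $2\pi\Z$ on a set of positive measure, so that $1-\cos(\cdot)$ is not identically zero there. Everything else is a single application of Theorem~\ref{nonex}.
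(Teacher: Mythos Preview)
Your proof is correct and follows the same core idea as the paper: compute $\Upsilon^A_u$ for $u=e^{\i\vartheta}v$ and apply Theorem~\ref{nonex}. The one difference is that you argue by contradiction and invoke the representation theorem to guarantee $|u|>0$ everywhere, whereas the paper applies Theorem~\ref{nonex} directly: since that theorem takes an arbitrary $u\in D^s_A(\R^3,\C)$ and concludes it cannot be a minimizer, there is no need to first assume $u$ is a minimizer. The paper simply writes $\Upsilon^A_u(x,y)=2(1-\cos((x-y)\cdot A(\tfrac{x+y}{2})))v(x)v(y)$ and asserts positivity on $E$, reading ``fixed sign'' as $v(x)v(y)>0$ a.e. Your detour through the bubble classification is harmless and does make the strict positivity of $|u|$ explicit, but it is not needed for the argument.
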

\begin{proof}
	Assumption \eqref{Aass-00} is fulfilled, since
$$
\Upsilon_u^A(x,y)=2\big(1-\cos\big((x-y)\cdot A\big(\frac{x+y}{2}\big)\big)\big)v(x)v (y)>0,\quad 
\text{for a.e. $(x,y) \in E$}. 
$$		
Hence, the assertion follows from Theorem~\ref{nonex}.
\end{proof}

\bigskip

\bigskip
\medskip

\end{document}